\documentclass[11pt]{article}
\usepackage[utf8]{inputenc}
\usepackage{lmodern}
\usepackage{subfiles}
\usepackage{enumitem}
\setenumerate{topsep=6pt,ref={\normalfont(\roman*)},label={\normalfont(\roman*)}, itemsep=0pt} % or \upshape

\usepackage{amsfonts}
\usepackage{amsthm}
\usepackage{amsmath}
\usepackage{amssymb}
\usepackage{amscd}
\usepackage{mathrsfs}
\usepackage{mathtools}
\usepackage{bbm}
\usepackage{esint}

\usepackage[margin=2.7cm]{geometry}
\usepackage{setspace}
\usepackage{indentfirst}
\usepackage{graphicx}
\usepackage{graphics}
\usepackage{lscape}
\usepackage{pgf,tikz}
\usepackage{tikz-cd}
\usepackage{color}
\usepackage{pict2e}
\usepackage{epic}
\usepackage{epstopdf}
\usepackage{titlesec, titlefoot}
\titleformat{\section}[block]{\Large\bfseries\filcenter}{\thesection}{1em}{}
\titleformat{\part}[block]{\LARGE\bfseries\filcenter}{Part \thepart.}{0.5em}{}
\usepackage{commath}
\usepackage{float}
\usepackage{caption}
\usepackage{etoolbox}
\usepackage[affil-it]{authblk}
\usepackage{combelow}

\usepackage[hidelinks, bookmarksdepth=3,citecolor=blue,colorlinks]{hyperref}
\hypersetup{bookmarksopen=true} 

\graphicspath{{./Pictures/}}
\allowdisplaybreaks

\expandafter\def\expandafter\normalsize\expandafter{%
\normalsize
\setlength\abovedisplayskip{6pt}
\setlength\belowdisplayskip{6pt}
\setlength\abovedisplayshortskip{6pt}
\setlength\belowdisplayshortskip{6pt}
}

\theoremstyle{plain}

\renewcommand*\thesection{\arabic{section}}
\numberwithin{equation}{section} 

\newtheorem{theorem}{Theorem}[section]
\newtheorem{lemma}[theorem]{Lemma}
\newtheorem*{lemma*}{Lemma}
\newtheorem{proposition}[theorem]{Proposition}
\newtheorem{corollary}[theorem]{Corollary}

\theoremstyle{definition}
\newtheorem{definition}[theorem]{Definition}
\newtheorem{remark}[theorem]{Remark}

\expandafter\let\expandafter\oldproof\csname\string\proof\endcsname
\let\oldendproof\endproof
\renewenvironment{proof}[1][\proofname]{%
\oldproof[\upshape \bfseries #1]%
}{\oldendproof}
\makeatletter
\def\@makechapterhead#1{%
\vspace*{50\p@}%
{\parindent \z@ \raggedright \normalfont
\interlinepenalty\@M
\Huge\bfseries  \thechapter.\quad #1\par\nobreak
\vskip 40\p@
}}
\makeatother

\renewcommand{\Re}{\operatorname{Re}}
\renewcommand{\Im}{\operatorname{Im}}

\newcommand{\eps}{\varepsilon}

\DeclareMathOperator{\ddiv}{div}

\DeclareMathOperator{\Id}{Id}
\DeclareMathOperator{\cof}{cof}
\DeclareMathOperator{\tr}{tr}
\DeclareMathOperator{\diag}{diag}
\DeclareMathOperator{\Diag}{Diag}
\DeclareMathOperator{\Sym}{Sym}

\DeclareMathOperator{\SO}{SO}
\DeclareMathOperator{\OO}{O}
\DeclareMathOperator{\rank}{rank}

\DeclareMathOperator{\Lip}{Lip}

\def \a{\alpha}

\def \R {\mathbb{R}}

\def \C{\mathbb{C}}

\def \N{\mathbb{N}}
\def \D{\textup{D}}

\def \e{\varepsilon}
\def \d{\,\textup{d}}

\def \p{\partial}

\def \mb{\mathbb}

\def \w{\rightharpoonup}

\def \tp{\textup}
\def \Id{\textup{Id}}

\def \loc{\textup{loc}}
\def \CO{\mathcal{H}}
\def \ACO{\overline{\mathcal{H}}}

\begin{document}

	\title{\textbf{Stationary points of conformally invariant\\ polyconvex energies}}
	
%\author{}	
	
	\author[1]{{\Large Andr\'e Guerra}}
	\author[2]{{\Large Riccardo Tione}}
		
	\affil[1]{\small Department of Pure Mathematics and Mathematical Statistics,  University of Cambridge,\protect\\  Wilberforce Rd, Cambridge CB3 0WB, UK
	\protect\\
	{\tt{adblg2@cam.ac.uk}} \vspace{1em} \ }
		
	\affil[2]{\small Dipartimento di Matematica, Universit\`a degli Studi di Torino,
Via Carlo Alberto 10, 10123 Torino, Italy 
	\protect\\
	{\tt{riccardo.tione@unito.it}}  }
	
	\date{}
	
	\maketitle

	\unmarkedfntext{
	\hspace{-0.75cm}
	%\emph{2020 Mathematics Subject Classification:} 35F30 (49J45).\\ 
	%\emph{Keywords.} Jacobian, Underdetermined PDE, Nonlinear PDE, Measurable data, Radial stretchings.\\
	\emph{Acknowledgments.} AG acknowledges the support of the Royal Society through a Newton International Fellowship. RT acknowledges the hospitality of the University of Cambridge, where part of this research was conducted.\ RT was funded by the Italian Ministry for University and Research (MUR), through FIS3 Starting Grant “GEMS” CUP: D53C25002540001 (Finanziata con il contributo del Ministero dell’Università e della ricerca ai sensi del D.D. n. 1802 del 21-11-2024 - BANDO FIS 3, Grant number FIS-2024-02219). We thank J. Hirsch for suggesting Lemma \ref{lemma:sigmasys}.\ 
	}
	
%	\vspace{1cm}	
	
	\begin{abstract}
We consider polyconvex integrands that are conformally invariant and frame indifferent. In two dimensions, we prove that the corresponding stationary points are smooth outside a discrete set; this result is new even for minimizers. We further show that every orientation-preserving stationary point is $C^1$. Since such solutions are closely related to Teichmüller-type variational problems, our result also confirms, in the case of integrands with linear growth in the distortion, a conjecture of Astala, Iwaniec, Martin, and Onninen from 2005.
	\end{abstract}

%	\tableofcontents	
	
	\section{Introduction}
	In this paper we are concerned with variational integrals
	$$\mb E[u] \equiv \int_\Omega F(\D u) \d x, \qquad u\colon \Omega\to \R^2,$$	
	where $F\in C^1(\R^{2\times 2})$ and $\Omega\subset \R^2$ is a planar domain.  
%	 assumption that the target is two dimensional is ma although under appropriate modifications the ensuing discussion also holds in higher dimensions.	
	Provided that $F$ has quadratic growth, any $W^{1,2}$-minimizer $u$ of $\mb E$ is necessarily a critical point of $\mb E$ with respect to target and domain variations \cite{Giaquinta2004}, i.e.\ it is a weak solution of the systems
%	whose critical points satisfy the Euler--Lagrange system
%	\begin{equation}
%	\label{eq:EL-general}
%	\ddiv \p_A F(u,\D u)=\p_{u} F(u,\D u).
%	\end{equation}
%	We assume that $F\in C^1( \R^m\times \R^{m\times 2})$ is a \textit{coercive integrand with quadratic growth}, i.e.\
%	\begin{align*}
%	-c_0+c_1 |A|^2  \leq  F(u,A) \leq c_0 +c_2 |A|^2, \qquad 
%	|\p_u F(u,A)| + |\p_A F(u,A)|^2 \leq c_2 (1+|A|^2),
%	\end{align*}
%	for some constants $0<c_1\leq c_2, 0\leq c_0$ and for all $(u,A)\in \R^m\times \R^{m\times 2}$.
	\begin{align}
	\label{eq:EL}
	\ddiv(F'(\D u))=0,\\
	\label{eq:inner}
	\ddiv(T(\D u))=0.
	\end{align}
	Here, and throughout the paper, for $A\in \R^{2\times 2}$ the \textit{stress-energy tensor} $T(A)$ is defined as
	\begin{equation}
	\label{eq:defT}
	T(A) \equiv F(A) \Id- A^T F'(A).
	\end{equation}
	We refer to weak solutions of \eqref{eq:EL}--\eqref{eq:inner} as \textit{stationary points} of $\mb E$.

	This paper deals with the regularity theory of stationary points when $F$ is strongly \textit{polyconvex}, i.e.\ when $F(A)$ is a strongly convex function of $(A,\det A)$, and we begin by briefly reviewing it.
%	 We begin by briefly reviewing the state of the

%	\textcolor{blue}{New structure: start with a good introduction to the theory of pc functionals, in the first subsection.\ I would not focus too much here on the $C^{1,1}$ vs $C^2$ regularity, this cane be said after the statement of the main theorem. Then, in "setting and main results" one can mention Hild and how he gives us a an idea for a good, large family of functionals. In passing, one cites harmonicas (for a couple of sentences). Then the main result follows and after we can put C11 vs C2 functionals. Next second result and we are done. In both cases, a sentence with "this will be done using a method that goes back to Sverak/ Iwaniec\&al" can surely be added, but maybe I wouldn't make the intro too too heavy on the math details. Also add the connection to the problem in the AIM book.}

\subsection{Brief overview of the regularity theory for polyconvex integrands}
\label{sec:regtheory}

It is known since Morrey's work that, if $F$ is regular and strongly convex, then minimizers of $\mb E$ are smooth. In constrast, comparatively little is known when $F$ is strongly polyconvex. Polyconvexity is a natural condition for the existence of minimizers \cite{Dacorogna2007} and it is verified by almost  all the relevant examples arising in Nonlinear Elasticity \cite{Ball1977}, Geometric Function Theory \cite{Iwaniec2001}, and Geometric Measure Theory \cite{DeLellis2019}, underscoring its importance in vectorial variational problems.

Unlike the convex case, where \eqref{eq:EL} is equivalent to minimality of $u$, this equivalence fails for polyconvex $F$, even if $u$ is smooth \cite{Spadaro2009}. This discrepancy leads to important distinctions between three classes of solutions: (i) \textit{minimizers} of $\mb E$; (ii) weak solutions of \eqref{eq:EL}--\eqref{eq:inner}, i.e.\ \textit{stationary points};  and (iii) weak solutions of \eqref{eq:EL} alone, which we refer to as \textit{weak extremals}.
We briefly summarize what is known for each class.
\begin{enumerate}
\item \textit{Minimizers}: if $F\in C^2$ then minimizers are smooth outside a closed  set $\Sigma$ of measure zero \cite{Acerbi1987,Evans1986}. Moreover, if a minimizer is Lipschitz, then $\Sigma$ is uniformly porous  \cite{Kristensen2007} and in particular has Hausdorff dimension strictly less than 2. We emphasize that, in the two-dimensional setting considered here, there are no examples of minimizers with $\Sigma \neq \emptyset$.
\item \textit{Stationary points}: there is no definitive general result available. A key difficulty is that the class of stationary points is not compact, at least for high-dimensional targets \cite{Muller1999a,Sverak2025}.\ On the other hand, it is possible to exclude the most common convex integration counterexamples \cite{DeLellis2019,Hirsch2021a} that we will mention in the next point of this list. 
In the specific case where $F$ is the area integrand  \cite{Giaquinta2012} or a small perturbation of it \cite{Rosa2020,Tione2021} (partial) regularity theorems are available. Beyond this regime, no further positive results are known.
\item \textit{Weak extremals}: there can be no general regularity theory. Indeed, the convex integration constructions of \cite{Muller2003,Szekelyhidi2004} produce strongly polyconvex integrands with Lipschitz weak extremals which are nowhere $C^1$. The gap between Lipschitz and $C^1$ regularity is therefore fundamental: by Schauder theory, any $C^1$ weak extremal is as regular as the integrand, e.g.\ smooth if $F$ is smooth \cite{Giaquinta2012}. The area integrand is again exceptional in this respect, as its weak extremals are smooth  \cite{Hirsch2023}. No other positive results are known, although  \cite{GuerraTione2024,Tione2022} provide a family of examples where a form of degenerate regularity is available.
\end{enumerate}

\subsection{Partial regularity for stationary points of conformally invariant integrands}

The purpose of this paper is to present a large new class of natural examples whose stationary points are $C^1$. We follow Hildebrandt's conjecture \cite{Hildebrandt1982}  that such regularity should hold for energies which are \textit{conformally invariant}, a ubiquitous condition in geometric problems: precisely, whenever $\Omega'\subset \R^2$ is open and $\varphi\colon \Omega'\to \Omega$ is a conformal diffeomorphism, then
	\begin{equation}\label{eq:ci}
		\mb E[u]= \mb E[u\circ \varphi] .
	\end{equation}
This invariance yields quite a rigid structure, since it forces the integrand to be 2-homogeneous, see \cite{GuerraKristensen2021} for automatic polyconvexity results for such integrands. In particular,  if $F\in C^2$ then $F$ is quadratic and therefore \eqref{eq:EL} is linear. A more interesting question arises if 
$F$ depends not just on $\D u$ but also on $u$ itself, as is the case for harmonic maps or $H$-systems.  Smooth conformally invariant integrands of this type were classified in \cite{Gruter1984} and a deep regularity theory for weak extremals was obtained in \cite{Riviere2007}, see also \cite{Riviere2008,Schikorra2010,Sharp2013} for further developments and \cite{Mazowiecka2018b,Schikorra2015} for nonlocal conformally-invariant analogues.

In contrast, here we do not assume that $F$ is quadratic (and hence $F$ is not $C^2$). However, we impose the condition, standard in Continuum Mechanics \cite{Marsden1994}, that $\mb E$ is \textit{frame independent}, meaning that the energy of the deformed configuration $u(\Omega)$ is independent of the observer's frame of reference. Equivalently,
\begin{equation}\label{eq:fi}
\text{$\mb E[Q\circ u]=\mb E[u]$ for any isometry $Q$.}
\end{equation}
As a simple example of the type of energies that we will consider, take the integrands
\begin{equation}\label{eq:ex1}
F_\e (A) = \sqrt{\e |A|^4 + \det(A)^2}, \qquad \e>0.
\end{equation}
The	corresponding energies satisfy \eqref{eq:ci}--\eqref{eq:fi} and, for $\e>0$ small, $F_\e$ is approximately $\lvert \det(\cdot) \rvert$, which is manifestly non-convex.\ 

More generally, as we show in Section \ref{sec:prelims}, \eqref{eq:ci}--\eqref{eq:fi} are equivalent to the representation
	\begin{equation}
	\label{eq:repF}
	F(A)= f(|A|^2, \det A) , \qquad \text{ where } f(t n, t d) = f(t n, -t d) = t f(n,d),
	\end{equation}
	for all $A\in \R^{2\times 2}$ and $t\geq 0$. Here and throughout, $|A|$ denotes the Euclidean norm of $A$.
The requirement of strong polyconvexity then leads us to assume that
\begin{equation}
\label{eq:ell}
f \text{ is smooth away from 0, convex and }
\begin{cases}
f_1 \geq \nu>0\\
%& \text{ if } n\geq 2 |d| \text{ and } n >0,\\ 
f_2\geq 0
%& \text{ if } n\geq 2 d\geq 0 \text{ and } n >0.
\end{cases}
\hspace{-.2cm} \text{ in } \{n\geq 2 d \geq 0, n>0\}.
\end{equation}
In particular, through \eqref{eq:repF}, example \eqref{eq:ex1} corresponds to the choice $f(n,d)=\sqrt{\e n^2 + d^2}$. More generally, we can take $f$ to be any smooth, uniformly convex norm in $\R^2$ with $f(n,d)=f(|n|,|d|)$.
		
		The first result of this paper is a precise partial regularity theorem for the above class:
	
	\begin{theorem}\label{thm:partialreg}
	Let $u\in W^{1,2}(\Omega,\R^2)$ be a stationary point of $\mb E$ and assume \eqref{eq:repF}--\eqref{eq:ell} hold.\ Then either $u$ is harmonic or $u\in C^\infty(\Omega\setminus \Sigma), \text{where } \Sigma \subset \Omega \text{ is discrete}.$	Moreover, there is $s>\frac 3 2$ such that $u\in W^{s,2}_\loc(\Omega,\R^2)$, and whenever $s<\tfrac 3 2$ we further have the uniform estimate
\begin{equation}
\label{eq:Ws2}
\| u\|_{W^{s,2}(\Omega')} \leq C(s,\Omega',\Omega)\, \| u\|_{W^{1,2}(\Omega)}.
\end{equation}
	\end{theorem}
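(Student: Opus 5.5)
We work in complex notation, writing $\D u$ through $u_z,u_{\bar z}$. Then $|A|^2=2(|u_z|^2+|u_{\bar z}|^2)$ and $\det A=|u_z|^2-|u_{\bar z}|^2$, so by \eqref{eq:repF} we may write $F(\D u)=g(|u_z|^2,|u_{\bar z}|^2)$, with $g$ positively $1$-homogeneous. The starting point is the inner variation equation \eqref{eq:inner}. For a conformally invariant integrand the trace of $T$ vanishes, because $F$ is $2$-homogeneous. Hence \eqref{eq:inner} reduces to a Hopf-type statement: the function
\[
\Phi \equiv g_1\,\overline{u_{z}}\,u_{\bar z}\cdot(\text{const})+g_2\,u_z\,\overline{u_{\bar z}}\cdot(\text{const})
\]
is holomorphic. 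More precisely, the $(2,0)$-part of $T$ is $(g_1+g_2)\,\overline{u_{\bar z}}u_z$ up to conventions, and we will compute this exactly. In the same way \eqref{eq:EL} reads $\partial_{\bar z}(g_1 u_z)+\partial_z(g_2 u_{\bar z})=0$, or the analogous form. The dichotomy in the theorem then comes from $\Phi$. If $\Phi\equiv 0$, then either $u_{\bar z}\equiv0$ or $u_z\equiv 0$ on open sets. Unique continuation for the Euler--Lagrange system, which becomes linear once one of the two factors vanishes, then propagates this to all of $\Omega$, so $u$ is holomorphic or antiholomorphic, hence harmonic. Otherwise the zeros of $\Phi$ form a discrete set, which will be $\Sigma$.

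Away from $\Sigma$ we have $|\Phi|\geq c>0$ locally. The key structural observation is then that $|u_z||u_{\bar z}|$ is bounded below in terms of $|\Phi|/(g_1+g_2)$. Since $g_1\geq\nu$ by \eqref{eq:ell} and $g_i$ are $0$-homogeneous, this forces both $|u_z|$ and $|u_{\bar z}|$ to be bounded away from zero, at least where $|\D u|$ is comparable to its average. We use homogeneity: writing $\Phi=h(\D u)$ with $h$ $2$-homogeneous, prescribing $\Phi$ puts $\D u$ on a level set $\{h=\Phi(x)\}$. This level set avoids the degenerate cone where $u_z$ or $u_{\bar z}$ vanishes, and there $f$ is smooth; the singularity of $f$ at the origin is only at $\D u=0$. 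The plan is to combine \eqref{eq:EL} with the constraint that $\Phi$ is holomorphic into a first-order elliptic system for $\D u$ of Beltrami/Bers–Vekua type, using the Sverak/Iwaniec approach of reformulating stationarity as a differential inclusion into a smooth $2$-dimensional manifold depending smoothly on $x$. Ellipticity of this system at points of the level set follows from convexity of $f$ together with $f_1\geq\nu$ and $f_2\geq0$ on the relevant sector $n\geq 2|d|$. First we get $W^{2,2}_{\loc}$ away from $\Sigma$, via difference quotients for the quasilinear system using the smooth dependence on $\Phi$. Then $C^{1,\alpha}$ follows by the planar theory (Bojarski/Meyers plus Beltrami), and smoothness by bootstrap, since $F$ is smooth near the range of $\D u$.

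For the global fractional estimate, we apply difference quotients of order $s-1<\tfrac12$ directly to \eqref{eq:EL} on all of $\Omega$, without using $\Phi$. Strong polyconvexity gives the monotonicity inequality
\[
\langle F'(A)-F'(B),A-B\rangle \geq c|A-B|^2 - C\,|\det A-\det B|\,|A-B|\cdot(\ldots),
\]
and the error is controlled by the compensated integrability of $\det \D u$. Concretely, since $\det\D u\in\mathcal H^1$ and the stress tensor satisfies $\ddiv T=0$, testing \eqref{eq:inner} against $x$ gives monotonicity, hence $\D u\in L^{2,\lambda}$ (Morrey). This should yield a Nikolskii bound $\|\tau_h \D u\|_{L^2}\lesssim |h|^{1/2}\|u\|_{W^{1,2}}$, giving \eqref{eq:Ws2} for all $s<\tfrac32$. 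The improvement to some $s>\tfrac32$ is qualitative: it follows from the local analysis, because near each point of $\Sigma$ the singularity of $\D u$ is at worst like $|z-z_0|^{\alpha}$, obtained from $\Phi\sim (z-z_0)^k$ by a blow-up argument.

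The main obstacle is the non-degeneracy step: proving that $\Phi\neq0$ prevents $\D u$ from approaching the degenerate cones, and that the resulting system is uniformly elliptic there. Near those cones $f$ is only Lipschitz, so the Euler--Lagrange equation degenerates. I would first try to derive a pointwise algebraic inequality $|h(A)|\leq C\,|u_z||u_{\bar z}|$, and then obtain an $L^\infty$ bound on $\D u$ from the reverse Hölder inequality.
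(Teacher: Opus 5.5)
\textbf{Verdict.} Your starting point is right, but there is a genuine gap at the step that carries the theorem, and the global estimate does not work as sketched.

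\textbf{What is right.} $T$ is traceless, so \eqref{eq:inner} makes $f_1(\D u)\,u_z\overline{u_{\bar z}}$ holomorphic; your $g_1+g_2=4f_1$ agrees with \eqref{eq:simpleT}. $\Sigma$ is then the zero set of this function.

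The degenerate case is simpler than you make it. If $T(\D u)\equiv 0$, then $\D u\in\CO\cup\ACO$ a.e., so Lemma \ref{lemma:zeroT} gives $F'(\D u)=f(\Id)\D u$ and \eqref{eq:EL} reads $\Delta u=0$. Your ``$u_z\equiv0$ or $u_{\bar z}\equiv0$ on open sets'' does not follow from a pointwise a.e.\ alternative, and no unique continuation is needed.

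\textbf{The gap: regularity away from $\Sigma$.} You assert that $\D u(x)\in\{T=T(\D u(x))\}$ together with \eqref{eq:EL} is an elliptic problem amenable to difference quotients, and that this follows from convexity of $f$, $f_1\ge\nu$ and $f_2\ge0$.

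\begin{itemize}
\item A difference-quotient argument compares $A=\D u(x+h)$ and $B=\D u(x)$. For an $L^2$ gradient these need not be close, and may have determinants of opposite sign. Infinitesimal ellipticity along level sets says nothing about such pairs, and \eqref{eq:EL} alone is not monotone.
\item What is needed is a global compensated inequality valid for all pairs, namely \eqref{eq:comp}. The obstruction is the possibly negative term $(f_2(A)+f_2(B))\det(A-B)$ in \eqref{e:DELTAAB}. This is non-convexity of $F$, not degeneracy: $F$ is smooth away from $0$, including near $\CO\cup\ACO$, so ``near those cones $f$ is only Lipschitz'' is a misdiagnosis.
\item Controlling $\det(A-B)$ is the bulk of the paper. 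It uses common right singular vectors and a trace inequality (Proposition \ref{prop:diag}). It then proves $\det(A-B)\ge\frac12\frac{1-k}{1+k}|A-B|^2$ on level sets when $\det A,\det B\ge0$ (Lemma \ref{l:qc}). Mixed signs need a separate compactness argument (Proposition \ref{prop:almost}). Finally, there is a perturbation off level sets via Lemma \ref{lem:differential}.
\item You also leave open how holomorphicity enters the Caccioppoli inequality. In the paper, $T(\D u)=\cof\D v$ and $|T(A)-T(B)|^2=-2\det(T(A)-T(B))$. So the compensating term is the null Lagrangian $-\kappa\det\D(\varphi v_h)$, controlled by the smoothness of $v$.
\item Bounding both $|u_z|$ and $|u_{\bar z}|$ from below would need an upper bound on $|\D u|$ that you do not have. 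The paper uses only $|\D u|\ge m$, which follows from $|T(A)|\le C|A|^2$.
\end{itemize}

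\textbf{The global estimate.} Your sketch fails for three reasons.
\begin{itemize}
\item Polyconvexity leaves the same sign-indefinite term $(f_2(A)+f_2(B))\det(A-B)$, now with an $x$-dependent coefficient. Neither the null-Lagrangian structure nor $\det\D u\in\mathcal{H}^1$ absorbs it.
\item In two dimensions, testing \eqref{eq:inner} with $x$ gives only a scale-invariant Pohozaev identity (radial and tangential energy balance on circles). It gives no decay, so no Morrey bound follows.
\item Even $\D u\in L^{2,\lambda}$ would say nothing about $\|\D u(\cdot+h)-\D u\|_{L^2}$.
\end{itemize}

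The paper instead reuses \eqref{eq:comp}, weakened through $|T(X)|\le C|X|^2$ to the bound $C|T(A)-T(B)|^{1+\beta}\max\{|T(A)|,|T(B)|\}^{-\beta}$. This needs no lower bound on $|\D u|$.
\begin{itemize}
\item Taking $\beta=0$ together with $\|\D^2 v\|_{L^\infty}\lesssim\|\D u\|_{L^2}^2$ yields \eqref{eq:compact}, with a constant independent of $u$.
\item Taking $\beta=\alpha$ with $\int|\D v|^{-2\alpha}<\infty$ yields the gain $|h|^{(1+\alpha)/2}$, hence some $s>\frac32$. This integrability holds because $|\D v|$ is, up to a constant, the modulus of the holomorphic Hopf differential, which has isolated zeros.
\end{itemize}
Your route to $s>\frac32$ instead relies on an unproven $|z-z_0|^\alpha$ profile of $\D u$ near $\Sigma$ obtained ``by blow-up''. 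That profile is not available at this stage.
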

	
We now make a series of remarks concerning this theorem.

\begin{remark}[Optimality]
Since $F\in C^{1,1}$ and  $F\not \in C^2$, Theorem \ref{thm:partialreg} is likely optimal, i.e.\ we do not expect stationary points to be smooth everywhere.
\end{remark}
	
\begin{remark}[Minimizers]
Theorem \ref{thm:partialreg} is new already for minimizers. As discussed above, for $C^2$ integrands the best results available yield only $\mathscr L^2(\Sigma)=0$. Moreover, these results are perturbative, as they rely crucially on linearizing \eqref{eq:EL}, and therefore for $C^{1,1}$-integrands, such as the ones we consider here, it is not even known if $\mathscr L^2(\Sigma)=0$; see also \cite{Duzaar2004}. Note that this state of affairs is in striking contrast with the scalar case, where the non-perturbative De Giorgi--Nash--Moser theory applies even if $F$ is merely $C^{1,1}$, see \cite{Figalli2017b} for further discussion.
\end{remark}	

\begin{remark}[$\e$-regularity and compactness]\label{rem:e-reg}
Theorem \ref{thm:partialreg} is not an $\e$-regularity theorem, unlike e.g.\ the regularity theorems for weakly harmonic maps in the plane \cite{Giaquinta2012}. Further to this point, note that \eqref{eq:Ws2} implies that a sequence of stationary points with bounded energy is compact in $W^{1,2}_\loc$. On the contrary, sequences of smooth, stationary harmonic maps with bounded energy may be non-compact in $W^{1,2}_\loc$ and in general develop bubbling \cite{Parker1996}.
\end{remark}

	\subsection{Application to Teichm\"uller problems}
		
	The conformally invariant integrands considered in Theorem \ref{thm:partialreg} arise naturally in Geometric Function Theory, as we now explain. A map $u\in W^{1,1}_\loc(\Omega,\R^2)$ is said to have \textit{finite distortion} if there is a measurable function $\mb K\colon \Omega\to [1,\infty)$ such that
$$\tfrac 1 2 |\D u(x)|^2 \leq  \mb K(x) \det \D u(x)\quad \text{ for a.e.\ } x\in \Omega.$$
We denote by $\mb K_u$ the least such function $\mb K$. 
When $\mb K_u\in L^\infty$ we say that $u$ is \textit{quasiregular}, and if moreover $u$ is a homeomorphism we say that $u$ is \textit{quasiconformal}.

It is often desirable to work with mappings of finite distortion. For instance, a planar orientation-preserving homeomorphism $u\in W^{1,1}_\loc$ has an inverse $f\in W^{1,1}_\loc$ if and only if $u$ has finite distortion \cite[\S 1]{Hencl2014a}. Here, we prove that any weak extremal  has essentially constant rank, and therefore orientation-preserving extremals are either harmonic or have finite distortion:

\begin{theorem}\label{thm:UCP}
Let $u\in W^{1,2}_\loc(\Omega,\R^2)$ be a weak solution to \eqref{eq:EL} and suppose that \eqref{eq:repF}--\eqref{eq:ell} hold. If $\Omega$ is connected then there is $k\in \{0,1,2\}$ such that $\textup{rank}(\D u)=k$ a.e.\ in $\Omega$.

In particular, if $\det \D u\geq 0$ a.e.\ in $\Omega$ then either $\det \D u=0$ a.e., in which case $u$ is harmonic, or else $\det \D u>0$ a.e., in which case $u$ has finite distortion.
\end{theorem}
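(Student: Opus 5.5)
The plan is to pass to complex notation and rewrite \eqref{eq:EL} as a first-order Beltrami-type system for the pair made of $u$ and a stream function. Then I would use the fact that nonconstant quasiregular maps have nonvanishing Jacobian a.e.; this is where the dichotomy will come from.

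\emph{Step 1: structure of $F'$.} By \eqref{eq:repF}, $F'(A)=2f_1 A+f_2\cof A$, with $f_1,f_2$ evaluated at $(|A|^2,\det A)$ and zero-homogeneous. Split $A=A_++A_-$ into conformal and anticonformal parts, so that $\cof A = A_+-A_-$. Then
\[ F'(A)=\lambda_+(A)A_++\lambda_-(A)A_-, \qquad \lambda_\pm = 2f_1\pm f_2 . \]
By the symmetry $f(n,d)=f(n,-d)$ it suffices to work in $\{n\ge 2|d|\}$, and there \eqref{eq:ell} gives $\lambda_+\ge\lambda_-$. Using convexity and $f_1\ge\nu$, I would show that $\lambda_\pm$ are bounded, that $\lambda_+\ge\nu$, and, crucially, that $A\mapsto F'(A)$ is uniformly monotone on the relevant pairs.

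\emph{Step 2: Beltrami system.} In complex form, \eqref{eq:EL} reads $\partial_{\bar z}(\lambda_+u_z)+\partial_z(\lambda_-u_{\bar z})=0$, while the Piola identity gives $\partial_{\bar z}u_z=\partial_z u_{\bar z}$. By Hodge/Poincaré on discs there is a $v\in W^{1,2}_\loc$ with $v_z=\lambda_+u_z$ and $v_{\bar z}=-\lambda_-u_{\bar z}$, possibly up to conjugations and after rescaling by a constant. Set $h=u+cv$ for a suitable constant $c>0$. Then $h_z=(1+c\lambda_+)u_z$ and $h_{\bar z}=(1-c\lambda_-)u_{\bar z}$, so $h_{\bar z}=\mu(z)\,\overline{\ }\!/\,h_z$-type relations hold, with coefficients that are merely measurable but bounded. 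The point is to choose $c$ and the combination so that $|h_{\bar z}|\le k|h_z|$ with $k<1$, where $k$ is controlled by the monotonicity from Step 1. Since $|u_{\bar z}|\le|u_z|$ is not available for every $A$, I would instead form the difference quotient via monotonicity, in the spirit of the Bojarski--Iwaniec linearization for $\Phi(z)=u+i v$-type maps. The aim is a linear equation $\Phi_{\bar z}=\mu\Phi_z+\nu\overline{\Phi_z}$ with $|\mu|+|\nu|\le k<1$, whose Jacobian satisfies $J_\Phi\approx |u_z|^2-\kappa|u_{\bar z}|^2$ for a pointwise factor $\kappa\ge 1$.

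\emph{Step 3: conclusion.} With such a $\Phi$, Stoilow factorization shows that either $\Phi$ is constant or $J_\Phi>0$ a.e. The first step is to prove $\{\D u=0\}$ is null or full. If $\Phi$ is constant, then $u_z=u_{\bar z}=0$ a.e., i.e.\ $\D u=0$ a.e. Otherwise $\D u\ne0$ a.e. A second quasiregular combination, $\Psi=u-c'v$, then separates rank $1$ from rank $2$. On $\{\det \D u=0\}$ we have $|u_z|=|u_{\bar z}|$ and, by evenness in $d$, $f_2=0$, so $\lambda_+=\lambda_-$. Hence one of the two combinations has vanishing Jacobian exactly there, and the Stoilow alternative gives rank $\equiv1$ or rank $\equiv2$ a.e. For the final claim, if $\det\D u\ge0$ and $\det\D u=0$ a.e., then $|u_z|=|u_{\bar z}|$ a.e. and $f_2=0$, so \eqref{eq:EL} becomes $\ddiv(2f_1\D u)=0$ with $f_1=f_1(n,0)$ a constant by homogeneity, i.e.\ $u$ is harmonic. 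If instead $\det\D u>0$ a.e., then $\mb K_u=|\D u|^2/(2\det\D u)$ is finite a.e.

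The main obstacle is Step 2. Extracting a genuinely \emph{uniform} ellipticity $k<1$ for the stream-function system from only convexity, $f_1\ge\nu$ and $f_2\ge0$ requires care, especially near the rank-one cone where $F\notin C^2$. I would first try the monotonicity inequality $\langle F'(A)-F'(B),A-B\rangle\ge\nu|A-B|^2$, applied with $B=0$ using homogeneity, and check that this already forces $|h_{\bar z}|\le k|h_z|$.
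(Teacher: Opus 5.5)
\textbf{The gap is in Step 3, and better constants will not fix it.} You assert that on $\{\det\D u=0\}$ one of $u\pm cv$ has vanishing Jacobian. On the rank-one cone, however, $f_2=0$ and $f_1=f_1(1,0)$ by $0$-homogeneity. So $\lambda_+=\lambda_-=\lambda_0\equiv 2f_1(1,0)>0$ is a \emph{fixed} constant there. With $|u_z|=|u_{\bar z}|$ this gives $J_{u\pm cv}=\pm 4c\lambda_0|u_z|^2$, which is nonzero unless $\D u=0$.

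The obstruction is general. Take any $\Phi=\alpha u+\beta\bar u+\gamma v+\delta\bar v$ with constant coefficients. On rank-one gradients,
$\Phi_z=(\alpha+\gamma\lambda_0)u_z+(\beta-\delta\lambda_0)\overline{u_{\bar z}}$ and $\Phi_{\bar z}=(\alpha-\gamma\lambda_0)u_{\bar z}+(\beta+\delta\lambda_0)\overline{u_z}$.
Suppose $\Phi$ is quasiregular and $J_\Phi=0$ wherever $\det\D u=0$; test this on the linear solutions $u(x)=Ax$ with $\rank A=1$. Then $\D\Phi$ must vanish on the whole set $\{|u_z|=|u_{\bar z}|\}$, which forces $\alpha=\beta=\gamma=\delta=0$. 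So the Stoilow alternative for a single quasiregular map of this type can never separate rank one from rank two.

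\textbf{Step 2 is also not established as written.}
\begin{enumerate}
\item Strong monotonicity of $F'$ is equivalent to strong convexity of $F$, which fails here (e.g.\ for $F_\e$ in \eqref{eq:ex1}).
\item Its case $B=0$ only gives the coercivity $2F(A)\ge 2\nu|A|^2$.
\item Your target $J_\Phi\approx|u_z|^2-\kappa|u_{\bar z}|^2$ with $\kappa\ge1$ is negative for the solution $u(z)=\bar z$, which is incompatible with quasiregularity.
\end{enumerate}
One can check, using convexity and $1$-homogeneity of $f$, that $c=1/f(\Id)$ does make $u+cv$ uniformly quasiregular. But its Jacobian vanishes exactly on $\{u_z=0\}$; for instance $u+cv$ is constant for $u=\bar z$. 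So neither your step ``$\Phi$ constant $\Rightarrow \D u=0$'' nor Step 3 follows from it.

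\textbf{The missing ideas are the two the paper uses.}
\begin{enumerate}
\item Hirsch's observation (Lemma~\ref{lemma:sigmasys}): frame indifference and $f_2\det A\ge 0$ give $F'(A)=A\,\sigma(A)$ with $\sigma$ symmetric and uniformly elliptic. Hence the \emph{scalar components} $u^1,u^2$ solve the same equation $\ddiv(\sigma\D u^j)=0$. The right quasiregular maps are $\varphi_j=u^j+iv^j$, each component paired with its own $\sigma$-conjugate, and both solve a common $\R$-linear Beltrami equation.
\item $\det\D u$ is read off not as a Jacobian but as a nonvanishing multiple of $\Im(\partial_w\psi)\circ\varphi_1$ for $\psi=\varphi_2\circ\varphi_1^{-1}$ (Lemma~\ref{lemma:im}). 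This $\psi$ solves a reduced Beltrami equation. The dichotomy $\det\D u=0$ a.e.\ or $\det\D u\neq 0$ a.e.\ then comes from the unique continuation theorem for reduced Beltrami equations (Theorem~\ref{thm:red}), a genuinely stronger input than Stoilow factorization.
\end{enumerate}
Standard quasiregular theory is needed only to localize to balls where $\varphi_1$ is injective with $\det\D\varphi_1>0$ a.e. That also yields rank $0$ versus rank $\ge 1$, via $\det\D\varphi_1=\sigma\D u^1\cdot\D u^1$.

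Your argument for the final claim is correct and coincides with the paper's: harmonicity when $\det\D u=0$ a.e., via $f_2(n,0)=0$ and $0$-homogeneity of $f_1$, and finite distortion when $\det\D u>0$ a.e.
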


\begin{remark}
It is interesting to compare Theorem \ref{thm:UCP} with the known results for harmonic maps: the constancy of the rank of harmonic maps holds for two-dimensional domains \cite{Sampson1978}, but it fails in higher dimensions unless the metric is analytic \cite{Jin1991}, see also \cite[\S 4 and Problem 7.8]{Figalli2025a}.
\end{remark}

Let us now consider the case where a weak extremal $u$ of $\mb E$ has finite distortion, the case where $u$ is harmonic being trivial. Instead of working with the representation \eqref{eq:repF}, let us introduce the function $\Psi\colon [1,\infty)\to \R$, $\Psi(t) \equiv f(2t,1)$, which allows us to write
\begin{equation}
	\label{eq:defPsi}
	F(A) = \Psi(\mb K_A) \det A, \qquad \mb K_A\equiv \frac 1 2 \frac{|A|^2}{\det A}, \quad \text{whenever $\det A>0$.}
\end{equation}	
Now suppose that $u\colon \Omega\to \Omega'$ is a homeomorphism with finite distortion with inverse $f\colon \Omega'\to \Omega$. By the change of variables formula  \cite{Hencl2005}, we have
$$\mb E^*[f]\equiv \int_{\Omega'} \Psi(\mb K_f(x)) \d x = \int_\Omega \Psi(\mb K_u(x)) \det \D u(x)\d x = \mb E[u],$$
and so minimizing $\mb E$ or minimizing $\mb E^*$, among homeomorphisms, are equivalent problems.

In the quasiconformal approach to Teichm\"uller theory one considers maps which minimize 
$\|\mb K_u\|_{L^\infty}$ e.g.\ in homotopy classes. The problem of minimizing $\mb E^*$ is thus an integral analogue of this problem, as one minimizes a suitable mean of $\mb K_f$, and it has received considerable attention in the last decades, starting with the work \cite{Astala2005}, see also 	\cite{Astala2010,Iwaniec2013,Martin2022,Martin2024,Martin2020,Martin2012,Martin2017} and \cite[\S 21]{Astala2009}. 

For regularity theory it is natural to assume that $\Psi$ is smooth, increasing and convex. Under these conditions, in \cite[Conjecture 13.1]{Astala2005} and \cite[Conjecture 21.2.1]{Astala2009}, the authors say that they ``strongly believe that (...)\ the extremals are continuously differentiable'', and even $C^{1,\alpha}$-diffeomorphisms. They also note, in line with the discussion in Section \ref{sec:regtheory}, that not even partial regularity is known. Apart from the case $\Psi(t)=t$, which corresponds to the Dirichlet energy, these conjectures remain open, but see \cite{Martin2024} for the case where $\Psi$ is exponential and one works over a closed Riemann surface rather than a domain.

In this paper we show that indeed $C^1$-regularity holds even in the broader class of orientation-preserving stationary points of $\mb E$:

	\begin{theorem}\label{thm:C1}
	Let $u\in W^{1,2}(\Omega,\R^2)$ be a stationary point of $\mb E$, and assume that \eqref{eq:repF}--\eqref{eq:ell} hold.  If $\det \D u\geq 0$ a.e.\ in $\Omega$ then $u\in C^1(\Omega)$. If moreover $u\colon \Omega\to u(\Omega)$ is a homeomorphism then $u$ is a diffeomorphism in $\Omega\setminus \Sigma$.
	\end{theorem}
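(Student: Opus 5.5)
By Theorem \ref{thm:partialreg}, either $u$ is harmonic, and then trivially $C^1$, or $u\in C^\infty(\Omega\setminus\Sigma)$ with $\Sigma$ discrete. So the whole problem is local near an isolated point of $\Sigma$. Fix $x_0\in\Sigma$ and a small disc $B=B_r(x_0)$ with $B\cap\Sigma=\{x_0\}$. The plan is to show that $\D u$ has a limit as $x\to x_0$. Since $u$ is continuous, by the fractional Sobolev embedding $W^{s,2}\hookrightarrow C^{0,s-1}$ for $s>1$, a removable-singularity argument then gives $u\in C^1(B)$.

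By Theorem \ref{thm:UCP}, $\det\D u>0$ a.e., since we are not in the harmonic case. On $B\setminus\{x_0\}$ the map is smooth and solves the Euler--Lagrange system. I would first exploit the inner variation equation \eqref{eq:inner}. Because $F$ is $2$-homogeneous, $\tr T(A)=2F-A:F'(A)=0$, so $T$ is traceless. Using frame indifference, $T$ is also symmetric. Hence $\ddiv T=0$ together with these algebraic properties says that the complex function
\[
\Phi \equiv T_{11}-iT_{12}
\]
is holomorphic in $B\setminus\{x_0\}$. This is the analogue of the Hopf differential. Since $T(\D u)\in L^1$ (indeed $|T(A)|\lesssim |A|^2$), the singularity at $x_0$ is at most a simple pole. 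Stationarity across $x_0$, i.e.\ the distributional equation \eqref{eq:inner} on all of $B$, rules out the residue, so $\Phi$ extends holomorphically to $B$.

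The key algebraic step is to express $T(A)$ in terms of the conformal and anticonformal parts $A=A^++A^-$. Using \eqref{eq:repF}, I expect $\Phi$ to be equal, up to a positive factor, to $f_1(|A|^2,\det A)\,\partial_z u\,\overline{\partial_{\bar z}u}$, or a similar expression. In the orientation-preserving regime, \eqref{eq:ell} gives $f_1\ge\nu$ and $f_2\ge0$. Writing $\mu=\partial_{\bar z}u/\partial_z u$ for the Beltrami coefficient, we get $|\mu|<1$ a.e. Near a point where $\Phi(x_0)\neq0$, $|\Phi|$ is bounded below. The relation $\Phi=G(|\partial_z u|,\mu)\cdot$(phase) then determines $\D u$ algebraically from $\Phi$ and one scalar quantity, and the Euler--Lagrange equation \eqref{eq:EL} becomes an elliptic equation for that scalar, with coefficients smooth in $\Phi$. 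Elliptic regularity then gives smoothness up to $x_0$, with no singularity at all.

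The main obstacle is the case $\Phi(x_0)=0$, i.e.\ a zero of the Hopf-type differential of order $m\ge1$. There I would blow up: the rescalings $u_\rho(y)=(u(x_0+\rho y)-u(x_0))/\rho^{\alpha}$ have uniformly bounded $W^{s,2}$ norms by \eqref{eq:Ws2}, and the compactness of Remark \ref{rem:e-reg} yields stationary tangent maps. By $2$-homogeneity of $F$, these tangent maps are homogeneous, and their differential $\Phi$ is a multiple of $z^m$. I would classify the homogeneous orientation-preserving stationary points, expecting them to be of the form $r^{\alpha}g(\theta)$ with $\alpha>1$ forced by $\det\ge0$ and the ODE for $g$. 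If every such blow-up has $\alpha>1$, then $\D u(x)\to0$ as $x\to x_0$, which gives $C^1$.

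For the homeomorphism statement, note that $u$ is smooth and orientation preserving on $\Omega\setminus\Sigma$. There $\det\D u>0$ should follow from the ellipticity of the reduced equation: by the analysis above, $\mu$ stays strictly inside the unit disc away from zeros of $\D u$, and the zeros of $\D u$ are isolated and contained in $\Sigma$. Combined with injectivity, this makes $u$ a diffeomorphism on $\Omega\setminus\Sigma$.
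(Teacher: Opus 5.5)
\textbf{There is a genuine gap, and it sits at the singular points themselves.} $\Sigma=\{T(\D u)=0\}$ is precisely the zero set of your Hopf-type differential $\Phi$, which is holomorphic on all of $B$ directly by Corollary~\ref{cor:hopf}. So your case $\Phi(x_0)\neq 0$ never occurs at a point of $\Sigma$; it is just Theorem~\ref{thm:precise}\ref{it:lowerbound}.

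At a zero of $\Phi$, however, nothing forces $\D u\to 0$. $\Phi(x_0)=0$ only tells you that every tangent map at $x_0$ is conformal, and tangent maps $z\mapsto az$ with $a\neq 0$ are not excluded. This is Case 2 of the paper. There the correct conclusion is $|\D u|\geq 2\rho>0$ near $x_0$, after which Theorem~\ref{thm:precise}\ref{it:lowerbound} gives smoothness. This is why the paper obtains smoothness off the possibly smaller set $\Sigma'=\{\D u=0\}$. Your plan expects only blow-ups of degree $\alpha>1$ and aims at $\D u(x)\to 0$, so it misses this case entirely.

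Proving this lower bound is the hardest step in the paper:
\begin{enumerate}
\item One rewrites \eqref{eq:inner} as a nonlinear Beltrami equation $u_{\bar z}=H(z,u_z)$ for large $|u_z|$ (Lemma~\ref{lemma:NB}).
\item Using the Iwaniec--Kovalev--Onninen solutions $f^\lambda$, one forms $g^\lambda=\lambda z+f^\lambda-u$ and shows these maps are $3$-quasiregular of degree one near $x_0$, hence injective.
\item A second degree argument in $\lambda$ then shows $|u(z+r)-u(z)|/r\neq 2\rho$ on a connected set, hence $>2\rho$.
\end{enumerate}

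\textbf{Your blow-up scheme is also unsupported.} The $2$-homogeneity of $F$ does not make tangent maps homogeneous. Nothing here supplies a monotonicity or frequency formula; in two dimensions the energy is scale invariant, so the usual monotonicity identity is vacuous.

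A uniform $W^{s,2}$ bound for the rescalings $\rho^{-\alpha}(u(x_0+\rho\,\cdot)-u(x_0))$ requires the decay $\int_{B_\rho(x_0)}|\D u|^2\lesssim\rho^{2\alpha}$. You have not proved this, and for $\alpha>1$ it fails as soon as some tangent map is a nonzero linear map.

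Even for $\alpha=1$ you need an a priori Lipschitz bound just to have tangent maps. The paper gets it from \eqref{eq:Linfty}, i.e.\ $\lambda|u_{\bar z}|^2\leq|\phi|$, combined with Theorem~\ref{thm:IKO}. Lemma~\ref{lemma:blowup} together with $\det\D v\geq 0$ then shows that every tangent map is $z\mapsto az$.

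Finally, $C^0$ or $W^{1,2}_\loc$ convergence of blow-ups to $0$ does not give $\D u(x)\to 0$ pointwise. In the paper's Case 1 this upgrade comes from the scale-invariant estimate \eqref{eq:IKO}, whose last two terms vanish as $r\to 0$ because $\phi(x_0)=0$.

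\textbf{The homeomorphism part has a similar gap.} $T(\D u)\neq 0$ does not prevent $\det \D u$ from vanishing at points of $\Omega\setminus\Sigma$, and smooth homeomorphisms can have degenerate Jacobian. So your claim that $|\mu|<1$ away from zeros of $\D u$ needs a real argument. The paper uses that $u$ is $\sigma$-harmonic with $\sigma$ locally Lipschitz off $\Sigma$ (Lemma~\ref{lemma:sigmasys}), and then invokes the Alessandrini--Nesi theorem for univalent $\sigma$-harmonic maps.
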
		
	
	\begin{remark}
	Through the representation \eqref{eq:defPsi}, whenever $\Psi$ has \textit{linear growth}, the assumptions that $\Psi$ be smooth, increasing and convex are equivalent to \eqref{eq:ell}. Note that the assumption of linear growth is crucial, as it guarantees that the integrand $F$ can be extended continuously to the whole of $\R^{2\times 2}$.
	\end{remark}

To deduce Theorem \ref{thm:C1} from Theorem \ref{thm:partialreg}, we describe the behavior of stationary points around their singular set $\Sigma$ via blowups, similarly in spirit to the recent paper \cite{Lamy2026}.\ Indeed, we start from the observation that any Lipschitz blowup of $u$ at a point of $\Sigma$, that we name \emph{tangent map}, is a linear, conformal map.\ The hard step is to improve the $L^\infty$ convergence to $W^{1,\infty}$ convergence. To do so, we use \textit{topological methods} similar to those of \cite{Faraco2008,Iwaniec2013,Lamy2026} to establish the following dichotomy: 
\begin{enumerate}
\item either 0 is a tangent map, and in this case we can use a variation of the arguments from \cite{Iwaniec2013} to show that the gradient localizes close to 0, hence the map is $C^1$;
\item or else all tangent maps are invertible, and in this case one can separate the gradient from 0, hence the map becomes locally quasiconformal. In this case the arguments leading to Theorem \ref{thm:partialreg} yield smoothness. 
\end{enumerate}
Interestingly, to rule out the potential singularities at points of $\Sigma$ we only need to use the inner variational equations \eqref{eq:inner}, cf.\ Remark \ref{rem:inner}.

\subsection*{Outline and Notation}

The paper is structured as follows. In Section \ref{sec:prelims} we collect some basic facts related to conformal invariance, frame indifference and the semiconvexity notions of the vectorial Calculus of Variations. In Section \ref{sec:partialreg} we prove Theorem \ref{thm:partialreg}, assuming a key nonlinear algebraic inequality which in turn is proved in Section \ref{sec:algin}. Section \ref{sec:constantrank} contains the proof of Theorem \ref{thm:UCP}, and in fact some more general results valid for $\sigma$-harmonic mappings. Finally, in Section \ref{sec:OP} we prove Theorem \ref{thm:C1}.

\subsubsection*{Sets and topology}

Let $E \subset \R^n$ be any set. Then, $\overline{E}$ denotes its closure, $\partial E$ its topological boundary, $E^c$ its complement in $\R^n$. 
%For two sets $A,B$, we denote by $\dist(A,B)$ the distance between them.\ 
Moreover, $A \Subset B$ means that $\overline{A} \subset B$.\ The open ball of radius $r$ centered at $X$ in $\R^n$ or in $\R^{n\times m}$ is denoted by $B_r(X)$.\ If $X = 0$, we will simply write $B_r$.\ Finally, given a Lipschitz map $f: E \subset \R^k \to \R^n$, we write $\Lip(f)$ for its Lipschitz constant.

\subsubsection*{Linear Algebra}
We write $e_i$ for the vectors of the canonical basis of $\R^2$: $e_1 = (1,0)$ and $e_2 = (0,1)$. $\det(A)$, $A^T$ and $|A|$ denote the determinant, the transpose and the Euclidean norm of the matrix $A$, respectively.\ The (standard) scalar product between matrices is denoted by $\langle A,B\rangle= \tr(A^T B)$, so that $|A| = \sqrt{\langle A,A \rangle}$. Finally, the cofactor matrix is
\begin{equation}\label{eq:A}
	\cof(A) \equiv \left(\begin{array}{cc} d & -c \\ -b & a\end{array}\right), \quad \text{ if } A = \left(\begin{array}{cc} a & b \\ c & d\end{array}\right),\quad
	\text{so }\cof^T(A)A = A\cof^T(A)= \det(A)\Id\,.
\end{equation}
Let us also note, for $A,B\in \R^{2\times 2}$, the useful identity
\begin{equation}
\label{eq:quadratic}
\det(A+B)= \det A + \langle \cof A, B\rangle + \det B.
\end{equation}
In addition, we will often use some special spaces of matrices:
\begin{itemize}
	\item $\Diag^+_m$ is the set of $m\times m$ diagonal matrices with non-negative entries;
	\item $\CO = \{A \in \R^{2\times 2}: A = \cof A\}$ is the set of conformal $2\times 2$ matrices, while  $\ACO = \{A \in \R^{2\times 2}: A = -\cof A\}$ is the set of anticonformal matrices;
	\item $\OO(m) = \{A \in \R^{m\times m}: A^TA = \Id \}$ is the set of orthogonal matrices. $\SO(m) = \{A \in \OO(m): \det(A) = 1\}$ is the special orthogonal group.\ We also denote $\OO_-(2) \equiv \OO(2) \setminus \SO(2)$;
	\item $\Sym_2 = \{A \in \R^{2\times 2 }: A^T = A\}$ is the set of symmetric matrices, and $\Sym_2^+$ is the set of positive definitive symmetric matrices.
\end{itemize}

\subsubsection*{Complex derivatives}\label{sec:code}
In what follows it will be convenient to switch between real and complex notation, as some properties become more transparent in the latter.\
Let $A$ be as in \eqref{eq:A}.\ Its conformal and anti-conformal parts are, respectively:
\[
[A]_{\mathcal{H}} \equiv  \frac{1}{2}[(a + d) + i(c-b)] \quad \text{ and } \quad [A]_{\overline{\mathcal{H}}} \equiv  \frac{1}{2}[(a - d) + i(c+ b)].
\]
Direct computations show that
\begin{equation}\label{eq:alg}
	\det(A) = |[A]_{\mathcal{H}}|^2 - |[A]_{\overline{\mathcal{H}}}|^2, \quad |A|^2 = 2|[A]_{\mathcal{H}}|^2 + 2|[A]_{\overline{\mathcal{H}}}|^2.
\end{equation}
Finally, if $\Omega \subset \mathbb{C}$ is an open set and $f:\Omega \to \C$, whenever its differential exists we set
\begin{equation}\label{wirt}
	\begin{split}
		\partial_zf &= f_z \equiv [\D f]_{\mathcal{H}} = \frac{1}{2}[(\partial_1f_1 + \partial_2f_2) + i(\partial_1f_2 - \partial_2f_1)], \\
		\partial_{\bar z }f &= f_{\overline z} \equiv [\D f]_{\overline{\mathcal{H}}} = \frac{1}{2}[(\partial_1f_1 - \partial_2f_2) + i(\partial_1f_2 + \partial_2f_1)].
	\end{split}
\end{equation}

\section{Conformal invariance and frame independence}\label{sec:prelims}

In this section we give simple characterizations of conformal invariance and frame independence, introduced in \eqref{eq:ci} and \eqref{eq:fi} respectively.

\begin{lemma}\label{lemma:confinv}
The energy $\mb E$ is conformally invariant if and only if 
\begin{align}
\label{eq:condsF}
F(AR)=F(A), \qquad
F(tA)= t^2 F(A),
\end{align}
for all $A\in \R^{2\times 2}, R\in \tp {SO}(2)$ and $t>0$.
\end{lemma}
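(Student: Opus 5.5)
The plan is to prove both directions by testing \eqref{eq:ci} on the simplest conformal maps and then localizing. Recall that in the plane the orientation-preserving conformal diffeomorphisms $\varphi\colon\Omega'\to\Omega$ are exactly the injective holomorphic maps. Their differentials have the form $\D\varphi(y)=\lambda(y)R(y)$ with $\lambda>0$ and $R\in\SO(2)$, and $\det\D\varphi=\lambda^2$. (If anticonformal maps are also admitted, one would additionally get $F(AJ)=F(A)$ for a reflection $J$. Since the lemma only lists $\SO(2)$, I read ``conformal'' as orientation-preserving.)

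\emph{Sufficiency.} Assume \eqref{eq:condsF} and let $v=u\circ\varphi$. The chain rule in $W^{1,2}$ (valid since $\varphi$ is a smooth diffeomorphism) gives
\[
\D v(y)=\D u(\varphi(y))\,\lambda(y)R(y).
\]
Hence, by \eqref{eq:condsF},
\[
F(\D v(y))=\lambda(y)^2F(\D u(\varphi(y)))=\det\D\varphi(y)\,F(\D u(\varphi(y))).
\]
The change of variables $x=\varphi(y)$ then yields $\int_{\Omega'}F(\D v)\d y=\int_\Omega F(\D u)\d x$, which is \eqref{eq:ci}.

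\emph{Necessity.} Assume \eqref{eq:ci} holds for all $u$ and all conformal $\varphi$. Fix $A\in\R^{2\times2}$, $R\in\SO(2)$ and $t>0$, and take $\varphi(y)=x_0+tR(y-y_0)$, a conformal similarity. Test with the linear map $u(x)=Ax$ on small balls: $\Omega=B_r(x_0)$ and $\Omega'=\varphi^{-1}(\Omega)=B_{r/t}(y_0)$. Here $\D(u\circ\varphi)\equiv tAR$, so \eqref{eq:ci} reads
\[
F(A)\,\pi r^2=F(tAR)\,\pi r^2/t^2 .
\]
Thus $F(tAR)=t^2F(A)$ for all such $A,R,t$. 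Taking $t=1$ gives rotation invariance, and taking $R=\Id$ gives $2$-homogeneity, which together are \eqref{eq:condsF}. If the definition requires \eqref{eq:ci} only for the given domain $\Omega$, I would instead use affine $u$ and note that $\mb E$ is defined on all open sets, or localize via the Lebesgue differentiation theorem with $u$ affine near a point.

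The only real subtlety is that the chain rule and change-of-variables step needs $u\circ\varphi\in W^{1,2}$. This holds because $\varphi$ is a smooth diffeomorphism, and locally bi-Lipschitz suffices after exhausting $\Omega$ by compact subsets. So I expect no genuine obstacle beyond fixing the precise meaning of ``conformal diffeomorphism''.
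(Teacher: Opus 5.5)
Your proposal is correct and is essentially the paper's proof: sufficiency follows from the chain rule $\D(u\circ\varphi)=(\D u\circ\varphi)\,\lambda R$ together with the change of variables, and necessity from testing a linear $u$ against a rotation and a dilation, which you merge into a single similarity. Your fallback for a fixed domain is unnecessary: use $\Omega$ itself (bounded) in place of $B_r(x_0)$ and set $\Omega'=\varphi^{-1}(\Omega)$, which directly gives $F(tAR)\,|\Omega'| = t^2F(A)\,|\Omega'|$, exactly as in the paper's one-line argument.
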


\begin{proof}
We just prove that if $\mb E$ is conformally invariant then \eqref{eq:condsF} holds, as the other direction is identical. For any $u\in W^{1,2}(\Omega,\R^2)$ and any conformal diffeomorphism $\varphi\colon \Omega'\to \Omega$ we have
\begin{align*}
 \int_{\Omega'} F(\D u\circ\varphi\, \D\varphi)\d y
 = \mb E[u\circ \varphi]
= \mb E[u] 
= \int_{\Omega} F(\D u)\d x
= \int_{\Omega'} F(\D u\circ\varphi)\det \D\varphi\d y.
\end{align*}
Taking $u$ to be a linear map,  we then deduce the two conditions in \eqref{eq:condsF} by respectively taking $\varphi$ to be a rotation and a dilation.
\end{proof}

Reasoning as above, we see that:

\begin{lemma}\label{lemma:frameind}
The energy $\mb E$ is frame indifferent if and only if 
\begin{align}
F(QA)=F(A),\quad \text{ for all $A\in \R^{2\times 2}, Q\in \tp O(2)$.}
\end{align}
\end{lemma}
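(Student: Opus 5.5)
The plan is to prove both implications directly from the definition \eqref{eq:fi}, imitating the argument of Lemma \ref{lemma:confinv} with the pointwise chain rule replaced by composition on the target side.

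\emph{Sufficiency.} Assume $F(QA)=F(A)$ for all $Q\in\tp O(2)$. Let $Q$ be an isometry of $\R^2$. By the Mazur--Ulam theorem, or directly, $Q(y)=Q_0y+b$ with $Q_0\in \tp O(2)$ and $b\in\R^2$. For any $u\in W^{1,2}(\Omega,\R^2)$ the composition $Q\circ u$ lies in $W^{1,2}$, and
\[
\D(Q\circ u)=Q_0\,\D u \quad\text{a.e.}
\]
Hence
\[
\mb E[Q\circ u]=\int_\Omega F(Q_0\D u)\d x=\int_\Omega F(\D u)\d x=\mb E[u].
\]

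\emph{Necessity.} Assume $\mb E[Q\circ u]=\mb E[u]$ for every isometry $Q$ and every admissible $u$. Fix $A\in\R^{2\times 2}$ and $Q_0\in\tp O(2)$, and take the linear map $u(x)=Ax$ and the isometry $Q(y)=Q_0y$. Since the integrands are constant, the hypothesis gives
\[
|\Omega|\,F(Q_0A)=|\Omega|\,F(A).
\]
As $\Omega$ is a nonempty open set, $|\Omega|>0$, so $F(Q_0A)=F(A)$.

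If the invariance \eqref{eq:fi} is only required on a smaller class of maps, we can replace $u$ by a localized version. Take $u(x)=Ax$ on a small ball $B\Subset\Omega$, apply $Q$, and compare the energies. Conjugating by a fixed cutoff does not work cleanly, because $Q$ acts on the whole map. A better option is to apply the identity to $u=Ax$ restricted to arbitrary subdomains, or to use continuity of $F$. In any case the linear test map suffices, since \eqref{eq:fi} is stated for all $u$.

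No step is a real obstacle. The only point needing care is the structure of isometries, namely that they are affine with orthogonal linear part, which makes the chain rule trivial.
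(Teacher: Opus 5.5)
Your proof is correct and matches the paper's argument, which just says \emph{reasoning as above} (referring to Lemma \ref{lemma:confinv}): sufficiency follows from the chain rule $\D(Q\circ u)=Q_0\D u$ for affine isometries $Q(y)=Q_0y+b$, and necessity from testing with a linear map $u(x)=Ax$ and a linear isometry $Q_0\in\tp O(2)$. Your paragraph about localizing to smaller classes of maps is not needed and can be dropped.
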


We can now combine the above two lemmas to find:

\begin{lemma}\label{lemma:charact}
The following are equivalent:
\begin{enumerate}
\item the energy $\mb E$ is conformally invariant and frame indifferent;
\item the representation \eqref{eq:repF} holds.
\end{enumerate}
\end{lemma}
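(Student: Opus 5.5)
By Lemmas \ref{lemma:confinv} and \ref{lemma:frameind}, condition (i) is equivalent to the three conditions
\[
F(QAR)=F(A),\qquad F(tA)=t^2F(A),\qquad Q\in \OO(2),\ R\in \SO(2),\ t>0 .
\]
The plan is to show that these invariances say exactly that $F(A)$ depends only on the pair $(|A|^2,\det A)$, up to the sign of $\det A$, and that the homogeneity of $F$ becomes the stated homogeneity of $f$.

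\emph{Orbit structure.} Since $\OO(2)\,\SO(2)=\OO(2)$, we have $QAR\in \OO(2)A\,\SO(2)$. Moreover, $A\,R'=(AR'A^{-1})A$ is not of the form $QA$ in general, so we should use the singular value decomposition instead. Write $A=U\Diag(\sigma_1,\sigma_2)V^T$ with $U,V\in \OO(2)$ and $\sigma_1\ge\sigma_2\ge0$. Then:
\begin{itemize}
\item if $V\notin \SO(2)$, replace $V$ by $VJ$ and $U$ by $UJ$, where $J=\Diag(1,-1)$;
\item the factor $U$ is absorbed on the left by $Q$.
\end{itemize}
Hence $F(A)=F(\Diag(\sigma_1,\sigma_2))$, so $F$ depends only on the singular values. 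The singular values are in turn determined by $|A|^2=\sigma_1^2+\sigma_2^2$ and $|\det A|=\sigma_1\sigma_2$. Therefore $F(A)=g(|A|^2,|\det A|)$ for a function $g$ defined on the admissible set $\{(n,d):n\ge 2|d|\}$.

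\emph{Defining $f$.} Set $f(n,d)\equiv g(n,|d|)$ on the admissible set. This gives evenness in $d$ immediately. Homogeneity follows from scaling: for $t>0$, the matrix $\sqrt t\,A$ has $|\sqrt tA|^2=t|A|^2$ and $\det(\sqrt t A)=t\det A$. Hence
\[
f(tn,td)=F(\sqrt tA)=tF(A)=t f(n,d).
\]
For $t=0$, use $F(0)=0$, which follows from $F(tA)=t^2F(A)$ together with continuity of $F$. If one wants $f$ defined on all of $\R^2$, extend it off the admissible set in any way compatible with these symmetries, for instance via the $1$-homogeneous extension; only the values on $\{n\ge 2|d|\}$ matter. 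This proves (i)$\Rightarrow$(ii).

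\emph{Converse.} Given the representation, the quantities $|QAR|^2=|A|^2$ and $|\det(QAR)|=|\det A|$ are unchanged, so $F(QAR)=F(A)$ by evenness of $f$ in $d$. Taking $t\mapsto t^2$ in the homogeneity of $f$ gives $F(tA)=t^2F(A)$. Lemmas \ref{lemma:confinv} and \ref{lemma:frameind} then yield (i).

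The only mildly delicate step is the orbit reduction. One must check that the combined group $\OO(2)\times\SO(2)$ acting on the left and right reaches all diagonal representatives, in particular that $\Diag(\sigma_1,-\sigma_2)$ and $\Diag(\sigma_1,\sigma_2)$ lie in the same orbit. Left multiplication by $J\in\OO_-(2)$ handles this, and it is precisely what produces the symmetry $f(n,d)=f(n,-d)$.
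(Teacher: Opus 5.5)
Your proposal is correct and follows essentially the same route as the paper. Both proofs reduce $F(A)$ to $F(D_A)$ via the singular value decomposition with the right factor in $\SO(2)$, and both use that ordered singular values are in bijection with $(|A|^2,|\det A|)$ (the paper's map $\Phi$ onto $\{n\geq 2d\geq 0\}$). Both then extend $f$ evenly in $d$ using $F(\diag(1,-1)A)=F(A)$ and read off the $1$-homogeneity of $f$ from the $2$-homogeneity of $F$. Your explicit converse and your treatment of $t=0$ fill in details the paper leaves implicit; note that $F(0)=4F(0)$ already gives $F(0)=0$ without any continuity.
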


\begin{proof}
It is clear from the previous lemmas that any integrand as in \eqref{eq:repF} induces a conformally invariant and frame indifferent integrand, so we just prove the converse. From the singular value decomposition, any matrix $A\in \R^{2\times 2}$ can be written as
\begin{equation}
\label{eq:SVD}
A=Q_AD_A R_A, \qquad\tp{ where } Q_A\in \tp{O}(2), R_A\in \SO(2), D_A\in \Diag_2^+,
\end{equation}
thus from the lemmas we have $F(A)=F(D_A)$. If $D_A= \tp{diag}(\lambda_1, \lambda_2)$ then we can set 
$$\Phi \colon (\lambda_1, \lambda_2)\mapsto (\lambda_1^2+\lambda_2^2, \lambda_1 \lambda_2)$$
and note that $\Phi$ is a bijection from $\{(\lambda_1, \lambda_2):\lambda_1\geq \lambda_2\geq 0\}$ to $\mb O\equiv \{(n,d):n\geq 2 d\geq 0\}$, thus we can define $f\colon \mb O \to \R$ as
$$f(n,d) \equiv F(\tp{diag}(\Phi^{-1}(n,d))).$$
It follows that the representation \eqref{eq:repF} holds for all $A\in \R^{2\times 2}$ with $\det A\geq 0$.\ Let $Q = \diag(1,-1)$.\ Since $F(QA)=F(A)$, by extending $f$ in an even way, i.e.\ by setting $f(n,-d)= f(n,d)$ for $(n,d)\in \mb O$, this makes representation \eqref{eq:repF} valid for all $A$. Finally, since $F$ is positively 2-homogeneous, $f$ is positively 1-homogeneous, and the conclusion follows.
\end{proof}

Throughout this paper, we will assume that $F$ has a representation as in \eqref{eq:repF}. We denote by $f_1, f_2, f_{11},\dots$ the partial derivatives of $f$ and we write $f_i(A)\equiv f_i(|A|^2, \det A)$, and analogously for higher order derivatives, to simplify the notation. It is easy to compute the derivative
\begin{equation}
\label{eq:derF}
F'(A) = 2 f_1(A) A  + f_2(A) \cof A.
\end{equation}
For $A = 0$, the previous expression needs to be interpreted as $F'(0) = 0$.\ Thanks to the boundedness of $f_1,f_2$, $F'$ defined in this way belongs to $C^0(\R^{2\times 2},\R^{2\times 2})\cap C^\infty(\R^{2\times 2}\setminus \{0\},\R^{2\times 2})$. Either from \eqref{eq:derF}, or by differentiating directly the identity $F(QAR)=F(A)$, it is easy to see that
\begin{equation}
\label{eq:invder}
F'(QAR)=QF'(A)R \qquad \text{for all }Q,R\in \tp O(2)
\end{equation}
and all $A\in \R^{2\times 2}$.
The next lemma gives an explicit form for the stress-energy tensor:

\begin{lemma}
Let $F\colon \R^{2\times 2}\to \R$ be as in \eqref{eq:repF}. Then, with $T$ as in \eqref{eq:defT}, we have
\begin{equation}
\label{eq:simpleT}
T(A)=f_1(A)\left(|A|^2 \Id -2 A^T A\right), \text{ if }A \neq 0,
\end{equation}
and $T(0) = 0$.\ With this choice $T \in C^0(\R^{2\times 2},\R^{2\times 2})\times C^\infty(\R^{2\times 2}\setminus \{0\},\R^{2\times 2})$.
\end{lemma}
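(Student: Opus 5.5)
The plan is to compute $T(A) = F(A)\Id - A^T F'(A)$ directly from \eqref{eq:derF}, which gives
$$T(A) = F(A)\Id - 2 f_1(A) A^TA - f_2(A) A^T\cof A .$$
By \eqref{eq:A} we have $A^T\cof A = (\cof^T(A) A)^T = \det(A)\Id$, so
$$T(A) = \bigl(F(A) - f_2(A)\det A\bigr)\Id - 2 f_1(A) A^T A .$$
It therefore suffices to show $F(A) - f_2(A)\det A = f_1(A)|A|^2$. This is the Euler identity for the positively $1$-homogeneous function $f$: differentiating $f(tn,td)=tf(n,d)$ in $t$ at $t=1$ gives $f(n,d) = n f_1(n,d) + d f_2(n,d)$ wherever $f$ is differentiable. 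Evaluating at $(n,d)=(|A|^2,\det A)$ yields the claim, and hence \eqref{eq:simpleT}, for every $A\neq 0$.

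The only point needing care is differentiability of $f$ at $(|A|^2,\det A)$ for $A\ne 0$. This holds by \eqref{eq:ell}, since $n=|A|^2>0$ keeps us away from the origin of the $(n,d)$-plane; the same fact was already used implicitly in \eqref{eq:derF}. Note that $(n,d)$ lies in $\{n\ge 2|d|\}$, and smoothness is assumed away from $0$.

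For $A=0$ we set $T(0)=0$, consistently with $F(0)=0$ (by $2$-homogeneity) and $F'(0)=0$. Smoothness of $T$ on $\R^{2\times 2}\setminus\{0\}$ follows from the formula, because $f_1$ is smooth there as a composition of smooth maps. For continuity at $0$, note that $f_1$ is $0$-homogeneous, so $f_1(A)$ equals $f_1$ evaluated on the compact set $\{n=1, |d|\le 1/2\}$ and is therefore bounded. Hence $|T(A)|\le C|A|^2\to 0$. No step is a real obstacle; the main thing to check is the boundedness of $f_1$, which comes from $0$-homogeneity and continuity on that compact set.
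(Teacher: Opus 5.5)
Your proposal is correct and follows the paper's proof. Both compute $A^TF'(A)=2f_1(A)A^TA+f_2(A)\det(A)\Id$ from \eqref{eq:derF} and then use Euler's identity to get $F(A)=f_1(A)|A|^2+f_2(A)\det A$; you apply it to the $1$-homogeneous $f$, while the paper applies it to the $2$-homogeneous $F$ through $F(A)=\tfrac12\tr(A^TF'(A))$, which is the same identity. Your continuity argument at $0$, via $|T(A)|\le C|A|^2$ from the $0$-homogeneity of $f_1$, is just a more explicit version of the paper's remark that continuity follows from \eqref{eq:defT}.
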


\begin{proof}
Let first $A \neq 0$.\  From \eqref{eq:derF} we compute
$$A^T F'(A) = 2 f_1(A) A^T A + f_2(A) \det(A)\,\Id.$$
Since $F$ is positively 2-homogeneous, by Euler's identity we have
$$F(A) =\frac 1 2  \langle F'(A),A\rangle=\frac 1 2 \tr(A^T F'(A)) = f_1(A)|A|^2 + f_2(A) \det(A)$$
and the conclusion follows from the definition \eqref{eq:defT}.\ Finally, if $A = 0$, expression \eqref{eq:defT} yields $T(0) = 0$.\ The continuity and the smoothness of $T$ immediately follows from \eqref{eq:defT}.
\end{proof}

The next result can be compared with Noether's theorem, as conformal invariance of the energy implies that the associated stress-energy tensor can be identified with a holomorphic differential:

\begin{corollary}\label{cor:hopf}
If $u\in W^{1,2}(\Omega,\R^2)$ is a weak solution of \eqref{eq:inner}, $\cof T(\D u)$ is locally the Hessian of a harmonic function.\ This property holds globally if $\Omega$ is simply connected.
\end{corollary}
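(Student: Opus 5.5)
We first compute the algebraic structure of $T(A)$ from \eqref{eq:simpleT}. The matrix $S\equiv|A|^2\Id-2A^TA$ is symmetric and trace-free, since $\tr(A^TA)=|A|^2$. Hence $T(\D u)$ is a symmetric, trace-free matrix field, which we write as
$$T(\D u)=\begin{pmatrix}\alpha&\beta\\ \beta&-\alpha\end{pmatrix}.$$
Its cofactor is then
$$\cof T(\D u)=\begin{pmatrix}-\alpha&-\beta\\ -\beta&\alpha\end{pmatrix}=-T(\D u),$$
which is again symmetric and trace-free. Since $f_1$ is bounded, $T(\D u)\in L^1_{\loc}$, so all the equations below make sense distributionally.

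Next we read off \eqref{eq:inner}. With the row-wise divergence convention, it gives the two equations
$$\p_1\alpha+\p_2\beta=0,\qquad \p_1\beta-\p_2\alpha=0$$
in $\mathcal D'(\Omega)$. These are exactly the Cauchy--Riemann equations for $h\equiv\alpha-i\beta$. By Weyl's lemma for $\p_{\bar z}$, since $h\in L^1_\loc$ solves $\p_{\bar z}h=0$ distributionally, $h$ is holomorphic. If the divergence is instead taken column-wise, symmetry of $T$ makes the two conventions coincide. In particular $\alpha$ and $\beta$ are smooth and harmonic.

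We now produce the potential. On a disc $B\subset\Omega$, or on all of $\Omega$ if it is simply connected, pick a holomorphic $H$ with $H''=h/2$ (up to a fixed constant normalization) and set $\phi=\Re H$. For $\phi=\Re H$ one has $\phi_{11}=\Re H''=-\phi_{22}$ and $\phi_{12}=-\Im H''$. Choosing the constant in $H''=c\,h$ appropriately (for instance $H''=-\tfrac12(\alpha-i\beta)$ up to factors) gives $\D^2\phi=\cof T(\D u)=-T(\D u)$. Alternatively, one can use the Poincaré lemma directly. The rows of $\cof T$ are curl-free, which is the same pair of equations, so $\cof T=\D w$ for a vector field $w$. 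Symmetry of $\D w$ then makes $w$ curl-free, so $w=\nabla\phi$. Finally, $\Delta\phi=\tr\cof T=0$, so $\phi$ is harmonic. On a simply connected $\Omega$ both integrations are global, which gives the last assertion.

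The only delicate point is low regularity: $T(\D u)$ is merely $L^1$. We handle this by first upgrading to smoothness via Weyl's lemma, so that the Poincaré lemma is applied to smooth fields. Apart from that, the proof reduces to the trace-free symmetric structure coming from \eqref{eq:simpleT}.
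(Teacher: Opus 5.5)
Your proof is correct, and your Poincaré-lemma version is exactly the paper's argument: $\ddiv T(\D u)=0$ is equivalent to $T(\D u)=\cof\D v$, i.e.\ $\cof T(\D u)=\D v$, and symmetry plus tracelessness of $T$ then give $v=\nabla\phi$ with $\phi$ harmonic. Your Cauchy--Riemann/Weyl-lemma variant is just the complex form of the same fact, namely the holomorphic Hopf differential that the paper uses later in \eqref{eq:innercomplex}; it also makes explicit why the merely $L^1$ field $T(\D u)$ is smooth, a point the paper passes over. One small slip there: the normalization is forced to be $H''=-h=-(\alpha-i\beta)$, not $h/2$ or $-\tfrac12 h$.
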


\begin{proof}
When $\Omega$ is simply connected,  \eqref{eq:inner} is equivalent to the existence of $v\in W^{1,2}(\Omega,\R^2)$ such that $T(\D u)=\cof \D v$.\ Note that $T$ is symmetric and traceless, hence so is $\D v$: thus $v$ is the gradient of a harmonic function. 
\end{proof}

Let us also note, for later use, the following characterization of the zero level set of $T$:

\begin{lemma}\label{lemma:zeroT}
Let $A\in \R^{2\times2}$. Then $T(A)=0$ if and only if $A\in \CO \cup \ACO$. If this is the case, then
\begin{equation}
\label{eq:F'conf}
F'(A) =\left( 2 f_1(\Id) + f_2(\Id)\right) A = f(\Id)A.
\end{equation}
\end{lemma}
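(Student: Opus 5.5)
The plan is to read both claims off the explicit formulas \eqref{eq:simpleT} and \eqref{eq:derF}. We use the symmetry \eqref{eq:invder} and the homogeneity of $f$ from \eqref{eq:repF} to reduce everything to the identity matrix.

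\emph{Step 1 (zero set of $T$).} The case $A=0$ is trivial, since $T(0)=0$ and $0\in\CO\cap\ACO$. For $A\neq0$ we use \eqref{eq:simpleT}. We first check that $f_1(A)>0$. Since $|A|^2\geq 2|\det A|$, the point $(|A|^2,\det A)$ lies in $\{n\geq 2|d|,\ n>0\}$. By the evenness $f(n,-d)=f(n,d)$ in \eqref{eq:repF}, $f_1$ is even in $d$, so \eqref{eq:ell} gives $f_1(A)\geq\nu>0$. Hence $T(A)=0$ if and only if $A^TA=\tfrac12|A|^2\Id$, i.e.\ $A=\lambda Q$ with $\lambda=|A|/\sqrt2>0$ and $Q\in\OO(2)$. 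The matrices $\lambda Q$ with $Q\in\SO(2)$ are exactly the nonzero elements of $\CO$ (they have the form $\begin{pmatrix} a&-c\\ c&a\end{pmatrix}$, so $A=\cof A$). The matrices $\lambda Q$ with $Q\in\OO_-(2)$ are exactly the nonzero elements of $\ACO$. This gives the first equivalence. Alternatively, by \eqref{eq:alg}, $A^TA$ is a multiple of $\Id$ exactly when $[A]_{\CO}=0$ or $[A]_{\ACO}=0$.

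\emph{Step 2 (formula for $F'$).} Take $A=\lambda Q$ with $Q\in\OO(2)$ and $\lambda>0$. Applying \eqref{eq:invder} with left factor $Q$ and right factor $\Id$ gives $F'(A)=QF'(\lambda\Id)$. Since $F$ is positively $2$-homogeneous, $F'$ is positively $1$-homogeneous, so $F'(\lambda\Id)=\lambda F'(\Id)$. From \eqref{eq:derF} with $\cof\Id=\Id$,
\[
F'(\Id)=\bigl(2f_1(\Id)+f_2(\Id)\bigr)\Id ,
\]
and therefore $F'(A)=(2f_1(\Id)+f_2(\Id))A$. For $A=0$ the formula holds trivially because $F'(0)=0$. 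Finally, $f$ is positively $1$-homogeneous, so Euler's identity at $(n,d)=(|\Id|^2,\det\Id)=(2,1)$ gives $2f_1(2,1)+f_2(2,1)=f(2,1)=f(\Id)$. This is the second equality in \eqref{eq:F'conf}.

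The argument is routine, and the only point needing care is the positivity of $f_1$ when $\det A<0$, handled in Step 1. Using \eqref{eq:invder} with a possibly orientation-reversing left factor $Q$ treats the anticonformal case directly. This avoids evaluating $f_2$ at negative determinant, where it changes sign.
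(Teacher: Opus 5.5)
Your proof is correct. Step 1 is the paper's argument: the paper notes that $\CO\cup\ACO=\{A: |A|^2\Id = 2A^TA\}$ and invokes \eqref{eq:simpleT} with \eqref{eq:ell}. You additionally spell out the evenness of $f_1$ in $d$, which is needed to get $f_1(A)\geq\nu$ when $\det A<0$.

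Step 2 takes a somewhat different route.
\begin{itemize}
\item The paper evaluates \eqref{eq:derF} directly at $A$. It observes that $\cof A = \frac{\det A}{|\det A|}A$ on $\CO\cup\ACO$. It then uses that $f_2$ is odd and $0$-homogeneous, and $f_1$ even and $0$-homogeneous, to get $f_2(A)\cof A=f_2(\Id)A$ and $f_1(A)=f_1(\Id)$.
\item You write $A=\lambda Q$ with $Q\in\OO(2)$ and transport everything to $\Id$ via the equivariance \eqref{eq:invder} and the $1$-homogeneity of $F'$. This way \eqref{eq:derF} is only ever evaluated at $\Id$.
\end{itemize}
The parity of $f$ in $d$ is still used in your argument. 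It is packaged inside \eqref{eq:invder}, that is, frame indifference under orientation-reversing $Q$, rather than invoked by hand through the sign change of $f_2$.

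The paper's version is a one-line pointwise computation. Yours is slightly more structural and treats the conformal and anticonformal cases uniformly, at the cost of relying on \eqref{eq:invder} having been established beforehand.
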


\begin{proof}
The characterization of the zero level set of $T$ follows immediately  from \eqref{eq:simpleT} and the assumption \eqref{eq:ell}, due to the fact that $\CO\cup \ACO=\{A\in \R^{2\times 2}:|A|^2\Id = 2 A^T A\}$. As $|A|^2 = 2\det A$ if $A \in \CO$ and $|A|^2 = -2\det A$ if $A \in \ACO$, in both cases we have $\cof A= \frac{\det A}{\lvert \det A\rvert} A$. Since $f$ is even in the last variable, $f_2$ is odd in the last variable and therefore 
$$f_2(|A|^2, \det A) \cof A = f_2(\Id) A \quad \text{for } A\in \CO\cup \ACO.$$
Hence \eqref{eq:F'conf} follows directly from \eqref{eq:derF}.\ Finally, $2 f_1(\Id) + f_2(\Id) = f(\Id)$ due to Euler's identity.
\end{proof}

We conclude this section by briefly explaining why the ellipticity of our problem is encoded through the parameter $\nu>0$ in \eqref{eq:ell}.  
Recall that an integrand $F\colon \R^{2\times 2}\to \R$ is said to be strongly rank-one convex if there is $\nu>0$ such that $t\mapsto F(A+t X)-\nu |A+ tX|^2$ is convex, for any $A,X\in \R^{2\times 2}$ with $\rank(X)=1$. This is equivalent to saying that the Euler--Lagrange system \eqref{eq:EL} satisfies the Legendre--Hadamard ellipticity condition \cite{Dacorogna2007}. We then have:

\begin{lemma}[Baker--Ericksen inequality]\label{lem:BE}
	Let $\mb U \equiv \{(n,d):n>2|d|>0\}$ and let $F\colon \R^{2\times 2}\to \R$ be an integrand of the form
	$$F(A)=f(|A|^2, \det A), \qquad f\in C^\infty(\overline {\mb U}\setminus \{0\}) \cap C^0(\overline {\mb U}).$$
If $F$ is strongly rank-one convex then there is $\nu>0$ such that $f_1 \geq \nu \text{ in } \mb U$.\\
Conversely, if $f_1\geq \nu$ in $\mb U$ and $f$ is convex, then $F$ is strongly rank-one convex.
	\end{lemma}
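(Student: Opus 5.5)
Both directions come from restricting $F$ to a rank-one line and reading off its convexity through the representation $F(A)=f(|A|^2,\det A)$. Fix $A$ and a rank-one $X$, and set $g(t)\equiv F(A+tX)$. By \eqref{eq:quadratic} and $\det X=0$, the point $(|A+tX|^2,\det(A+tX))$ moves as
$$\bigl(|A|^2+2t\langle A,X\rangle+t^2|X|^2,\ \det A+t\langle\cof A,X\rangle\bigr).$$
The second coordinate is affine in $t$ and the first is quadratic with second derivative $2|X|^2$. Hence, whenever the curve stays where $f$ is smooth,
$$g''(t)=\langle D^2f\,\gamma',\gamma'\rangle+2f_1|X|^2,\qquad \gamma(t)=(|A+tX|^2,\det(A+tX)).$$

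\emph{Converse direction.} Assume $f$ is convex with $f_1\geq\nu$ in $\mb U$. The Hessian term in $g''$ is nonnegative by convexity. The second term is at least $2\nu|X|^2=\frac{d^2}{dt^2}\nu|A+tX|^2$. Therefore $g''-\frac{d^2}{dt^2}\nu|A+tX|^2\ge0$ wherever $\gamma(t)\in\mb U$.

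To pass from this to global convexity, I would extend $f$ evenly in $d$ as in \eqref{eq:repF}, so that $F$ depends only on $|A|$ and $|\det A|$, and use $f_1\ge\nu$ on all of $\{n>2|d|\}$. The bad set is where $\gamma(t)$ lies on $\{d=0\}$ or on $\partial\mb U$; this happens only at finitely many $t$ unless the entire line consists of conformal, anticonformal or singular matrices. At those points $g$ is $C^1$, since $F\in C^1$. A $C^1$ function whose second derivative (minus $2\nu|X|^2$) is nonnegative away from finitely many points is convex after subtracting $\nu|A+tX|^2$. The degenerate lines are handled by continuity, approximating by nearby lines.

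One subtlety is that $f$ is convex on $\overline{\mb U}$, which is not obviously convex in $(n,d)$ along $d\mapsto -d$. This does not matter, because we only use the pointwise Hessian on open arcs inside $\mb U$ together with $C^1$ gluing.

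\emph{Forward direction.} Assume $F$ is strongly rank-one convex. I would show that $f_1$ must be bounded below by choosing $A,X$ so that $\gamma'(0)=0$, which kills the Hessian term and leaves $g''(0)=2f_1|X|^2\ge 2\nu|X|^2$. This requires $\langle A,X\rangle=0$ and $\langle\cof A,X\rangle=0$. By \eqref{eq:invder} and the singular value decomposition \eqref{eq:SVD}, I may take $A=\diag(\lambda_1,\lambda_2)$. For $\lambda_1>\lambda_2>0$ one has $\cof A=\diag(\lambda_2,\lambda_1)$, and $X=e_1\otimes e_2$ is rank one and orthogonal to every diagonal matrix. Every point of $\mb U$ is $\Phi(\lambda_1,\lambda_2)$ for such $\lambda$, since $n>2|d|>0$ forces $\lambda_1>\lambda_2>0$ after using the evenness in $d$. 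Hence $f_1\ge\nu$ on all of $\mb U$.

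The main obstacle is the gluing in the converse direction across the non-smooth loci ($d=0$, $n=2|d|$, and $A=0$). I would settle it by checking that $F\in C^1$, so that $g'$ is continuous and nondecreasing on each of the finitely many intervals, and hence nondecreasing overall.
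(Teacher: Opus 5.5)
Your proposal is correct and is essentially the paper's proof: the same second-derivative formula along rank-one lines, the same choice of diagonal $A$ with $X=e_1\otimes e_2$ (so that the Hessian term vanishes) for the forward direction, and convexity of $f$ plus $f_1\geq\nu$, with a continuity/density argument for degenerate lines, for the converse. The only differences are minor. Your $C^1$-gluing across $\{d=0\}$ and $\{n=2|d|\}$ is more cautious than needed, because $f$ is assumed smooth on $\overline{\mb U}\setminus\{0\}$ and so the paper only needs density for lines through the origin. For $d<0$ you can simply take $A=\diag(\lambda_1,-\lambda_2)$ rather than invoke evenness, which the lemma does not assume.
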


\begin{proof}
We compute second derivatives for points $A \neq 0$ and directions $X$ with $\det(X) = 0$:
\begin{equation}\label{eq:secder}
\frac 1 2  \frac{\d^2}{\d t^2} F(A+t X) \Big|_{t=0}= |X|^2 f_1(A)  + \frac 1 2 f''(A) [v,v],
\end{equation}
where $f''(A) \in \Sym(2)$ is the Hessian of $f$ and $v=(2 \langle A,X\rangle, \langle \cof A, X\rangle)$.\ Assuming $F$ is strongly rank-one convex, we get from \eqref{eq:secder}
\[
	\nu |X|^2 \le |X|^2 f_1(A)  + \frac 1 2 f''(A) [v,v].\
\]
By exploiting the singular value decomposition of $A$, we can find a non-zero, singular matrix $X_A$ for which $v = 0$. Thus, $f_1 \ge \nu$, as wanted.\ Conversely, if $f$ is convex and $f_1 \ge \nu$, then:
\[
|X|^2 f_1(A)  + \frac 1 2 f''(A) [v,v] \ge \nu |X|^2.
\]
From \eqref{eq:secder}, we then get that $F$ is strongly rank-one convex with parameter $\nu$ on every line parallel to a rank-one matrix which does not intersect $0$.\ By continuity of $f$ and density of these lines, $F$ is also strongly rank-one convex with parameter $\nu$ on rank one lines which intersect $0$.
\end{proof}

\section{Theorem \ref{thm:partialreg}: Partial regularity}\label{sec:partialreg}

The purpose of this section is to prove the following more precise version of Theorem \ref{thm:partialreg}:

\begin{theorem}\label{thm:precise}
	Let $u\in W^{1,2}(\Omega,\R^2)$ be a weak solution of \eqref{eq:EL}--\eqref{eq:inner} and suppose that $F$ satisfies \eqref{eq:repF}--\eqref{eq:ell}. The following hold:
	\begin{enumerate}
		\item\label{it:2} either the set $\Sigma\equiv \{T(\D u)=0\}$ is discrete or $u$ is harmonic;
		\item\label{it:lowerbound} if $0<m\leq \inf_{B_r(x_0)} |\D u|$ then $u\in C^\infty(B_r(x_0),\R^2)$, so in particular $u\in C^\infty(\Omega\setminus\Sigma)$;
	
		\item\label{it:W14} there exists $C=C(F) > 0$ such that, for every $\Omega' \Subset \Omega$ and every $h$ with $|h| < \tp{dist}(\Omega',\partial \Omega)$,
		\begin{equation}\label{eq:compact}
		 \|\D u (\cdot + h) - \D u (\cdot)\|_{L^2(\Omega')} \le C\|u\|_{W^{1,2}(\Omega)} |h|^{\frac 1 2}.
		\end{equation}
		In addition, there is $\e = \e(u)>0$ such that $u\in W^{3/2 + \e,2}(\Omega,\R^2)$.
	\end{enumerate}
\end{theorem}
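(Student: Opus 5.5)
The plan is to exploit the holomorphic structure from Corollary \ref{cor:hopf} and then run a difference-quotient argument, using the Euler--Lagrange system \eqref{eq:EL} coupled with the smoothness of $T(\D u)$.

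\emph{Step 1: the Hopf differential and item (i).} By Corollary \ref{cor:hopf}, locally $\cof T(\D u)=\D^2 h$ with $h$ harmonic. Since $T(\D u)$ is symmetric and traceless, in complex notation \eqref{eq:simpleT} becomes
\[
T(\D u)\ \leftrightarrow\ \varphi \equiv -4 f_1(\D u)\, \overline{u_z}\, u_{\bar z},
\]
up to a harmless constant and conjugation. This $\varphi$ is holomorphic; moreover $T(\D u)$ is real-analytic, so it is smooth and locally bounded together with all its derivatives.

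If $\varphi\equiv 0$ on a component, then $\D u\in \CO\cup\ACO$ a.e. Lemma \ref{lemma:zeroT} then gives $F'(\D u)=f(\Id)\D u$, so \eqref{eq:EL} reads $f(\Id)\Delta u=0$ and $u$ is harmonic. Otherwise the zero set of $\varphi$ is discrete. By Lemma \ref{lemma:zeroT} and $f_1\geq \nu$, this zero set is exactly $\Sigma=\{T(\D u)=0\}$, which proves (i).

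\emph{Step 2: the key algebraic inequality and item (iii).} I would aim for an inequality of the form
\begin{equation*}
\langle F'(A)-F'(B),A-B\rangle + C\,|T(A)-T(B)|\,(|A|+|B|)\ \geq\ c\,|A-B|^2 \qquad \text{for all } A,B\in\R^{2\times 2},
\end{equation*}
with $c,C$ depending only on $F$. The heuristic is that $F$ fails to be monotone only in directions which are invisible to monotonicity, and these are detected by $T$. I would prove it using the invariance \eqref{eq:invder} to reduce to a normal form for the pair $(A,B)$, together with the conformal/anticonformal splitting \eqref{eq:alg} and the conditions \eqref{eq:ell}.

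Granting it, I test \eqref{eq:EL} with $\eta^2\,(u(\cdot+h)-u)$ for a cutoff $\eta$. The monotonicity term is controlled by the inequality, and the $T$-term is bounded by $\|\D T(\D u)\|_{L^\infty}|h|\int|\D u|$, since $T(\D u)$ is smooth. The cutoff error terms are absorbed in the standard way. This yields
\[
\int \eta^2|\D u(\cdot+h)-\D u|^2\lesssim |h|\,\|u\|_{W^{1,2}}^2,
\]
which is exactly \eqref{eq:compact}. To reach $W^{3/2+\e,2}$, I would exploit the fact that the estimate has slack near $\Sigma$. Since $\varphi$ has isolated zeros of finite order, the $T$-term should admit a reverse-Hölder / Gehring-type improvement on dyadic scales, giving a Besov exponent strictly larger than $\tfrac12$, with $\e$ depending on the zero orders of $\varphi$ and hence on $u$.

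\emph{Step 3: item (ii).} On $B_r(x_0)$ with $|\D u|\geq m$, the integrand $F$ is smooth and uniformly Legendre--Hadamard there by Lemma \ref{lem:BE}. Here I expect the algebraic inequality to hold in a strengthened form in which the $T$-term enters linearly in $|T(A)-T(B)|$ with constants depending on $m$. The same difference-quotient test should then give $|h|^2$ on the right-hand side, hence $\D u\in W^{1,2}_{\loc}$.

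Differentiating \eqref{eq:EL} then gives a linear Legendre--Hadamard system for $\partial_k u$. Its coefficients $F''(\D u)$ lie in $W^{1,2}\subset \mathrm{VMO}$ in two dimensions, so $L^p$ theory gives $\D u\in W^{1,p}$ for all $p$, hence $C^{0,\alpha}$. A Schauder bootstrap then yields $u\in C^\infty$. The last claim of (ii) follows because $\D u$ is bounded below near any point of $\Sigma^c$: there $T(\D u)\neq 0$ is continuous, and $T(0)=0$.

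\emph{Main obstacle.} The hard step is the algebraic inequality of Step 2: finding the right combination of $F'$ and $T$ that is coercive despite $F$ being close to $|\det|$. I would first verify it for diagonal pairs and then handle general pairs via the singular value decomposition \eqref{eq:SVD} and the bounds $f_1\geq\nu$, $f_2\geq 0$.
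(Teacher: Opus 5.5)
Your Step 1 matches the paper. The VMO bootstrap at the end of Step 3 is an acceptable substitute for the paper's Gehring argument. The problem is the algebraic inequality in Step 2: it is misstated, and it has the wrong structure for (ii) and (iii).

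\textbf{The inequality as written is false.} $\Delta(A,B)$ and $|A-B|^2$ are $2$-homogeneous, while $|T(A)-T(B)|(|A|+|B|)$ is $3$-homogeneous. Replacing $(A,B)$ by $(tA,tB)$ and letting $t\to 0$ would force $\Delta(A,B)\geq c|A-B|^2$ for all $A,B$, i.e.\ uniform convexity of $F$. This fails for $F_\e$ in \eqref{eq:ex1} when $\e<1/12$, since $F_\e(\Id)=\sqrt{1+4\e}>4\sqrt{\e}=F_\e(\diag(2,0))=F_\e(\diag(0,2))$. The mismatch also shows in your estimate: $\|\D(T(\D u))\|_{L^\infty}|h|\int|\D u|$ scales like $\|u\|_{W^{1,2}}^3|h|$, whereas \eqref{eq:compact} squared needs $\|u\|_{W^{1,2}}^2|h|$. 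The scale-invariant version $\Delta(A,B)+C|T(A)-T(B)|\geq c|A-B|^2$ is true; it follows from Theorem \ref{thm:keyineq} because $|T(X)|\leq C|X|^2$. With it your difference-quotient argument does give \eqref{eq:compact}, and this is exactly the paper's case $\beta=0$.

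\textbf{What is missing for (ii) and the $W^{3/2+\e,2}$ claim} is the quadratic, normalized compensating term of Theorem \ref{thm:keyineq}, namely $|T(A)-T(B)|^2/\max\{|A|^2,|B|^2\}$. Since $T(\D u)$ is smooth, $|T(\D u(\cdot+h))-T(\D u)|=O(|h|)$. So any term linear in $|T(A)-T(B)|$, without an extra absorbable factor $|A-B|$, contributes $O(|h|)$. It cannot give the $|h|^2$ you assert in Step 3, nor any gain beyond $|h|$ in (iii), and your ``Gehring on dyadic scales'' has nothing to act on.

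With the quadratic form, both claims follow:
\begin{itemize}
\item On $\{|\D u|\geq m\}$ the term is at most $m^{-2}|T(A)-T(B)|^2$. Its integral is $O(|h|^2)$, which gives $W^{2,2}$ and then (ii).
\item Near $\Sigma$, write $T(\D u)=\cof\D v$ as in Corollary \ref{cor:hopf}. Using $\max\{|A|^2,|B|^2\}\gtrsim\max\{|T(A)|,|T(B)|\}\geq|\D v(x)|$ and interpolating, the term is bounded by $|\D v(\cdot+h)-\D v|^{1+\alpha}|\D v|^{-\alpha}$. H\"older together with $\int|\D v|^{-2\alpha}<\infty$, which comes from the finite-order zeros of the Hopf differential, yields $|h|^{1+\alpha}$.
\end{itemize}

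\textbf{Your plan for proving the inequality also needs a different route.} Diagonal pairs followed by SVD does not work for general pairs, because $A$ and $B$ cannot be diagonalized simultaneously. The paper proceeds in two stages:
\begin{itemize}
\item It first proves $\Delta(A,B)\geq c|A-B|^2$ on level sets $T(A)=T(B)$. There $A^TA$ and $B^TB$ commute, so $A$ and $B$ share right singular vectors. This stage uses Lemmas \ref{lemma:VN} and \ref{l:qc}, plus a compactness argument when $\det A\leq 0\leq\det B$.
\item It then removes the constraint $T(A)=T(B)$ by a contradiction argument, using that level sets of $T$ are smooth surfaces (Lemma \ref{lem:differential}).
\end{itemize}
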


\begin{remark}
Estimate \eqref{eq:compact} can be rewritten more compactly using Besov spaces, and it asserts that $\D u\in B^{3/2}_{2,\infty}(\Omega)$ with a corresponding estimate.
\end{remark}

The proof of this theorem relies on a key algebraic inequality, which is the heart of the proof:

\begin{theorem}\label{thm:keyineq}
Let $F\in C^1(\R^{2\times 2})$ be as in \eqref{eq:repF}--\eqref{eq:ell}. There are constants $C>c_F>0$ depending only on $F$ such that, for any $A,B\in \R^{2\times 2}$, we have
\begin{equation}\label{eq:comp}
c |A-B|^2 \le C\frac{|T(A)-T(B)|^2}{\max\{|A|^2,|B|^2\}} +  \langle F'(A)-F'(B),A-B\rangle .
\end{equation}
\end{theorem}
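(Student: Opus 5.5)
The plan is to use the symmetries of \eqref{eq:comp} to reduce to a compact set, and then prove it there by a contradiction and compactness argument. The delicate regions are the diagonal $A=B$ and the zero set $\CO\cup\ACO$ of $T$ from Lemma \ref{lemma:zeroT}.

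\emph{Reduction.} By \eqref{eq:invder}, $F'(QAR)=QF'(A)R$ for $Q,R\in\OO(2)$. By \eqref{eq:simpleT}, $T(QAR)=R^TT(A)R$. Hence every term in \eqref{eq:comp} is invariant under $(A,B)\mapsto(QAR,QBR)$. Every term is also $2$-homogeneous under $(A,B)\mapsto(tA,tB)$, since $F'$ is $1$-homogeneous and $T$ is $2$-homogeneous. So we may assume $|A|=1\ge|B|$, and we may put $A$ in diagonal form via \eqref{eq:SVD}. It then suffices to show that
\[
\Phi_C(A,B)\equiv C|T(A)-T(B)|^2+\langle F'(A)-F'(B),A-B\rangle
\]
satisfies $\Phi_C\ge c|A-B|^2$ on the compact set $K=\{|A|=1,\ |B|\le1\}$, for suitable constants $C>c>0$.

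\emph{Algebraic splitting via polyconvexity.} Write $F(A)=G(A,\det A)$ with $G(A,\delta)=f(|A|^2,\delta)$. Using \eqref{eq:quadratic} in the forms $\det A-\det B=\langle\cof A,A-B\rangle-\det(A-B)=\langle\cof B,A-B\rangle+\det(A-B)$, we get
\[
\langle F'(A)-F'(B),A-B\rangle=\big\langle \nabla G(A,\det A)-\nabla G(B,\det B),(A-B,\det A-\det B)\big\rangle+\big(f_2(A)+f_2(B)\big)\det(A-B).
\]
By convexity of $f$ and $f_1\ge\nu$ from \eqref{eq:ell}, the function $f(n,\delta)-\nu n$ is convex and nondecreasing in $n$. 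Hence $G-\nu|A|^2$ is convex, and the first term is at least $2\nu|A-B|^2$. (One has to extend $f$ suitably off $\{n\ge2|d|\}$ and handle the sign of $d$ through the even symmetry.)

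The only bad term is $(f_2(A)+f_2(B))\det(A-B)$. Here $f_2$ is $0$-homogeneous, hence bounded, and it carries the sign of $\det$. So the inequality already holds with $C=0$ whenever this term is $\ge -(2\nu-c)|A-B|^2$. In particular it holds if $\|f_2\|_\infty<2\nu$, or if $\det(A-B)$ has the same sign as $f_2(A)+f_2(B)$. The role of the $T$-term is to handle the remaining configurations.

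\emph{Contradiction argument.} Suppose the claim fails. Then there are $(A_k,B_k)\in K$ with $\Phi_k(A_k,B_k)<\frac1k|A_k-B_k|^2$ and $C_k=k$. Pass to a limit $(A,B)$. Two cases arise.

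\emph{Case 1: $A\ne B$.} Then $T(A)=T(B)$, and $\langle F'(A)-F'(B),A-B\rangle\le0$. From \eqref{eq:simpleT}, $T(A)=T(B)$ forces $f_1(A)(|A|^2\Id-2A^TA)=f_1(B)(|B|^2\Id-2B^TB)$. This pins down the singular values of $B$ relative to those of $A$, up to the right $\OO(2)$ action.

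When both $T(A)$ and $T(B)$ vanish, $A$ and $B$ lie in $\CO\cup\ACO$, and \eqref{eq:F'conf} gives $F'=f(\Id)\,\cdot$ there. Then the monotonicity term equals $f(\Id)|A-B|^2>0$, which is a contradiction.

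Otherwise $A^TA$ and $B^TB$ share eigenvectors, and the traceless parts match after weighting by $f_1$. Using the conformal/anticonformal decomposition \eqref{eq:alg}, I would check directly that $\langle F'(A)-F'(B),A-B\rangle>0$ for such pairs. The idea is that $|[A]_{\ACO}|$ and $|[B]_{\ACO}|$ are then tied to each other, and one can exploit the monotonicity of $f_1$ along the relevant ray.

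\emph{Case 2: $A=B$.} Set $X_k=(B_k-A_k)/|B_k-A_k|\to X$, and divide by $|A_k-B_k|^2$. If $A\ne0$, everything is smooth nearby, and we get the infinitesimal inequality
\[
F''(A)[X,X]+C\,|T'(A)X|^2\ge c|X|^2 .
\]
The condition $T'(A)X=0$ is a two-dimensional linear constraint on $X$. I would verify, using the splitting above and \eqref{eq:secder}, that $F''(A)$ is positive on $\ker T'(A)$. Equivalently, the second-variation bad term $2f_2(A)\det X$ is dominated by $2\nu|X|^2$ plus the Hessian term on this kernel. A compactness argument in $A$ then closes the case. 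The subcase $A=0$ cannot occur after normalization, because $|A|=1$.

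\emph{Main obstacle.} The main obstacle is precisely this positivity of $F''(A)$ on $\ker T'(A)$, uniformly in $A$. It is most delicate when $A$ approaches $\CO\cup\ACO$, where $T'(A)$ degenerates and where $f$ is only $C^{1,1}$ across $d=0$.

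My first attempt would be to write $X$ in complex notation as $(X_{\CO},X_{\ACO})$. There the linearization of $|A|^2\Id-2A^TA$ is explicit, which gives a concrete description of its kernel. I would then check directly that on this kernel $\det X$ has the favorable sign relative to $f_2(A)$.
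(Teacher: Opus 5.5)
\paragraph{Gap: Case 1 is asserted, not proved.} Your architecture matches the final step of the paper: normalize $|A|=1\ge|B|$, argue by contradiction, and split according to whether the limits coincide. Your treatment of $T(A)=T(B)=0$ is correct. The gap is the rest of Case 1. You only say you ``would check directly'' that $\Delta(A,B)>0$ whenever $T(A)=T(B)\neq0$ and $A\neq B$. That strict positivity on level sets of $T$ is the real content of the theorem, and your hint does not reach it.

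Tying $|[A]_{\ACO}|$ to $|[B]_{\ACO}|$ through the monotonicity of $f_1$ is roughly the mechanism of Lemma \ref{l:qc}. That lemma gives $\det(A-B)\ge\frac12\frac{1-k}{1+k}|A-B|^2$ when $T(A)=T(B)$ and $\det A,\det B\ge0$, and with \eqref{e:DELTAAB} it settles same-orientation pairs. Even that lemma needs a nontrivial computation you have not supplied. More importantly, it says nothing about pairs of opposite orientation. There $f_2(A)$ and $f_2(B)$ have opposite signs, and the bad term $(f_2(A)+f_2(B))\det(A-B)$ has no sign.

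The missing idea is Proposition \ref{prop:diag}:
\begin{itemize}
\item $T(A)=T(B)$ forces a common right factor, $A=Q_AD_AR$ and $B=Q_BD_BR$.
\item Since $F'(D_A),F'(D_B)\in\Diag^+_2$ (this uses $f_2\ge0$ on $\{\det\ge0\}$), the trace inequality $\tr(XQY)\le\tr(XY)$ for $X,Y\in\Diag^+_2$, $Q\in\OO(2)$ bounds the cross terms. This gives $\Delta(A,B)\ge\Delta(D_A,D_B)$.
\item Now $D_A,D_B$ are orientation preserving with $T(D_A)=T(D_B)$, so Lemma \ref{l:qc} applies.
\end{itemize}
This still only yields $\Delta(A,B)\ge2\nu|D_A-D_B|^2$, so strictness needs the equality analysis behind \eqref{eq:cla1}. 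Equality forces $D_A=D_B$, and then $\langle(\Id-Q)F'(D_A),(\Id-Q)D_A\rangle=0$. If $\det D_A>0$, the matrix $F'(D_A)D_A$ is positive definite, which forces $Q=\Id$. If $\det D_A=0$, then $f_2(A)=f_2(B)=0$ and \eqref{e:DELTAAB} concludes.

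\paragraph{Case 2 is sound and differs from the paper.} The paper shows level sets of $T$ are surfaces (Lemma \ref{lem:differential}). It then applies the quantitative level-set inequality of Proposition \ref{prop:almost} along a curve in the level set to get positivity of $F''(A)$ on $\ker T'(A)$. Your direct computation also works:
\begin{itemize}
\item For $\det A>0$, rotate so that $[A]_{\CO}=p>0$ and $[A]_{\ACO}=q\in[0,p)$.
\item Write $f_1(A)=g(|a_-|^2/|a_+|^2)$ with $g>0$, $g'\ge0$, and linearize $f_1(A)\,a_+\overline{a_-}$.
\item With $k=q/p$ and $m=2k^2g'/g\ge0$, the kernel $\ker T'(A)$ is given by $\Im x_-=k\Im x_+$ and $(1+m)\Re x_-=(m-1)k\Re x_+$.
\item Hence $|x_-|\le k|x_+|$, so $\det X\ge0$.
\end{itemize}
Therefore $F''(A)[X,X]=f''(A)[v,v]+2f_1(A)|X|^2+2f_2(A)\det X\ge2\nu|X|^2$ on the kernel, uniformly in $A\neq0$. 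Use a reflection if $\det A<0$, and note $f_2(A)=0$ if $\det A=0$.

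This route is a real economy: Case 1 then needs only the qualitative statement \eqref{eq:cla1}, not all of Proposition \ref{prop:almost}. But that qualitative statement is exactly what is missing.

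You also misplaced the difficulty. $T'(A)$ has rank two for every $A\ne0$, including on $\CO\cup\ACO$, and by \eqref{eq:ell} $f$ is smooth across $d=0$ away from the origin.
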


Theorem \ref{thm:keyineq} expresses a compensation phenomenon: the quantity
\begin{equation}
\label{eq:defdelta}
\Delta(A,B)\equiv \langle F'(A)-F'(B),A-B\rangle
\end{equation}
does not control $|A-B|^2$, as otherwise the integrand would be (strongly) convex.\ However, this is compensated by the first addendum on the right-hand side of \eqref{eq:comp}.\ Thanks to Corollary \ref{cor:hopf}, for a stationary point the quantity $T(\D u)$ is a priori smooth, and thus \eqref{eq:comp} can be seen as a perturbation of the usual strong monotonicity inequality valid for uniformly convex integrands.\ This \emph{compensated regularity} principle is the very same of \cite[Remark 6.2]{Tione2021}, see also \cite{Hirsch2023}, and it goes back to an idea of V. \v{S}ver{\'{a}}k \cite{Sverak1993} concerning elliptic differential inclusions.\ However, the analysis to obtain \eqref{eq:comp} in this context is much more subtle than in \cite{Tione2021}, due to the fact that the integrands under consideration have a more complicated structure than the area integrand.\ We defer the proof of Theorem \ref{thm:keyineq} to Section \ref{sec:algin}, while here we show how it implies Theorem \ref{thm:precise}. 

\subsection{Proof of Theorem \ref{thm:precise}}\label{sec:proof}

Throughout the proof we will assume, without loss of generality, that $\Omega$ is the ball $B_1$.\

Due to Corollary \ref{cor:hopf} we have $T(\D u) = \cof \D v$ where $v$ is the gradient of a harmonic function.\ If $\Sigma=\Omega$ then $\cof \D v=0$ in $B_1$, hence by Lemma \ref{lemma:zeroT} we have $F'(\D u)= f(\Id)\D u$.
By \eqref{eq:repF}-\eqref{eq:ell} $f(\Id) \neq 0$, thus \eqref{eq:EL} shows that $\Delta u = 0$. Otherwise, $\Sigma$ is discrete. This shows \ref{it:2}, and we assume throughout the rest of the proof that $\Sigma$ is discrete as otherwise there is nothing to show.

Let us start with \ref{it:lowerbound}. Note that the second claim is clear from the first one. Indeed, by \eqref{eq:ell} and \eqref{eq:simpleT}, there is a constant $C$, depending only on $F$, such that $|T(A)|\leq C |A|^2$. Therefore, since $T(\D u)$ is continuous, $|\D u|$ is essentially locally bounded from below in the complement of $\Sigma$. Let us thus prove the first claim, so  assume that  $0 < m \le |\D u|^2$ a.e.\ in $B_{2r}(x_0)$. For matrices $A,B$ with $|A|^2,|B|^2\ge m$, inequality \eqref{eq:comp} yields
\[
c|A-B|^2 \le C m^{-1}{|T(A)-T(B)|^2} +  \langle F'(A)-F'(B),A-B\rangle.
\]
Now, as $-2\det(T(A)-T(B)) = |T(A)-T(B)|^2$, the latter can be written, with $\kappa=2Cm^{-1}$, as
\begin{equation}\label{eq:modified}
c |A-B|^2 \le - \kappa \det(T(A)-T(B)) +  \langle F'(A)-F'(B),A-B\rangle.
\end{equation}

From now on, we will use the letter $C$ to denote a constant which depends only on $m,T(\D u)$ and $F$, and which may increase line by line.\ Take any $\varphi \in C^\infty_c(B_{3r/2}(x_0))$ with $\varphi=1$ in $B_r(x_0)$, any $|h| < r/2$, and denote $g_h(x) = g(x+h) - g(x)$ for any map $g$.\ We have, thanks to \eqref{eq:EL} and the fact that the determinant is a null-Lagrangian \cite[Theorem 2.3]{Muller1999a}:
\begin{equation}
\label{eq:start}
0 = \int_{B_{2r}(x_0)}-\kappa\det(\D(\varphi v_h))+ \langle F'(\D u(x+h))-F'(\D u(x)),\D(\varphi^2 u_h(x)) \rangle \d x.
\end{equation}
Identity \eqref{eq:quadratic} and the product rule yield
\begin{align*}
\int_{B_{2r}(x_0)}&-\kappa\, \varphi^2\det(\D v_h)+ \varphi^2\langle F'(\D u(x+h))-F'(\D u(x)),\D u_h(x) \rangle \d x \\
&= \int_{B_{2r}(x_0)} \kappa\, \varphi \langle \cof(\D v_h), v_h\otimes \D\varphi\rangle \d x - 2\varphi\langle F'(\D u(x+h))-F'(\D u(x)), u_h\otimes \D\varphi \rangle \d x.
\end{align*}
Since $\det(\D v_h)=\det(T( \D u(\cdot + h)-T(\D u))$, using \eqref{eq:modified} and the Lipschitzianity of $F'$ we obtain:
\[
c\int_{B_{2r}(x_0)}\varphi^2|\D u_h|^2\d x \le C\int_{B_{2r}(x_0)}|\varphi||\D v_h||v_h||\D\varphi| + 2|\varphi||\D u_h||u_h||\D\varphi|\d x.
\]
Via Young's inequality, we finally get
\begin{align*}
c\int_{B_{2r}(x_0)}\varphi^2|\D u_h|^2\d x & \le C\int_{B_{2r}(x_0)}|\varphi||\D v_h||v_h||\D\varphi| + C\int_{B_{2r}(x_0)} |u_h|^2|\D\varphi|^2\d x \\
& \leq C|h|^2 + C \int_{B_{2r}(x_0)} |u_h|^2 |\D \varphi|^2 \d x,
\end{align*}
where in the last inequality we used the harmonicity of $v$.\ From well-known properties of Sobolev functions, we then get $\D u \in W^{1,2}_{\loc}(B_r(x_0),\R^{2\times 2})$.\ In addition, a standard argument involving Gehring's Lemma shows that $\D u \in W^{1,2+\eps}_{\loc}(B_r(x_0),\R^{2\times 2})$, see \cite[Proposition 6.5]{Tione2021} for more details. 
It follows that $\D u$ is continuous, and hence
we can obtain higher regularity through a standard bootstrap argument, thanks to the smoothness and strong rank-one convexity of the integrand, provided by Lemma \ref{lem:BE}. We refer the reader to  \cite[Section 6.1]{Tione2021} and \cite{Giaquinta2012} for more details.

\medskip

We now show \ref{it:W14}.\ 
We can further assume that $0$ is the only point in $B_1=\Omega$ where $T(\D u)$ is zero.\ 
Since $v$ is harmonic there exists $0<\alpha \leq 1$ such that
\begin{equation}\label{eq:muck}
\int_{B_1}\frac{1}{|\D v|^{2\alpha}}\d x < + \infty.
\end{equation}
Choosing any $0 \le \beta \le \alpha\leq 1$, from the fact that $|T(X)| \le C|X|^2$ we can estimate \eqref{eq:comp} as
\begin{equation}\label{eq:alpha}\begin{split}
c|A-B|^2 &\le C\frac{|T(A)-T(B)|^2}{\max\{|T(A)|,|T(B)|\}} +  \langle F'(A)-F'(B),A-B\rangle \\
&\le C\frac{|T(A)-T(B)|^{1 + \beta}}{\max\{|T(A)|,|T(B)|\}^\beta} +  \langle F'(A)-F'(B),A-B\rangle.
\end{split}
\end{equation}
Considering as above a cut-off function $\varphi\in C^\infty_c(B_{3/4})$ and letting $|h| \le 1/4$, we start from
\[
\int_{B_1}\langle F'(\D u(x+h))-F'(\D u(x)),\D(\varphi^2 u_h)\rangle \d x = 0,
\]
and we proceed similarly to before to end up with
\begin{equation}\label{eq:h}
	\begin{split}
c\int_{B_1}\varphi^2|\D u_h|^2\d x &\le C\int_{B_1}\varphi^2\frac{|\D v_h|^{1 + \beta}}{\max\{|\D v(x+h)|,|\D v(x)|\}^\beta}\d x + C\int_{B_1}|\D \varphi|^2|u_h|^2\d x\\
&\le C\int_{B_1}\frac{\varphi^2|\D v_h|^{1 + \beta}}{|\D v(x)|^\beta} \d x + C\|\D u\|_{L^{2}(\Omega)}^2|h|^2,
\end{split}
\end{equation}
and the constants only depend on $F$.
If we take $\beta = 0$, the previous inequality yields
\[
c\int_{B_1}\varphi^2|\D u_h|^2\d x \le C\|\D^2v\|_{L^\infty(B_{3/4})}|h| + C\|\D u\|_{L^2(\Omega)}^2|h|^2 \le C\|\D u\|_{L^{2}(\Omega)}^2|h|.
\]
Notice that we have used the estimate $$\|\D^2v\|_{L^\infty(B_{3/4})} \le C\|\D v\|_{L^1(B_{5/6})} \le C \|\D u\|_{L^{2}(\Omega')}^2,$$ valid since $v$ is harmonic with $\cof \D v = T(\D u)$ and $|T(\D u)|\leq C |\D u|^2$ where $C=C(F)$.\ This shows \eqref{eq:compact}, with a constant $C$ independent of $u$.\ To complete the proof of \ref{it:W14} we go back to \eqref{eq:h}, this time taking $\beta=\alpha$.\ H\"older's inequality and the smoothness of $v$ yield
\begin{equation}\label{eq:h1}
\int_{B_1}\frac{\varphi^2|\D v_h|^{1 + \alpha}}{|\D v(x)|^\alpha} \d x \le \left(\int_{B_1}\frac{\varphi^2}{|\D v(x)|^{2\alpha}}\d x\right)^\frac{1}{2} \left(\int_{B_1}\varphi^2|\D v_h|^{2(1 + \alpha)}\d x\right)^\frac{1}{2}\overset{\eqref{eq:muck}}{\le} C|h|^{1+\alpha}.
\end{equation}
Hence, since $\alpha\leq 1$, \eqref{eq:h} gives us
\[
\int_{B_1}\varphi^2|\D u_h|^2\d x \le C|h|^{1+\alpha}.
\]
In turn, this shows that
\[
\int_{B_{\frac{1}{4}}}\int_{B_1}\frac{|\D u_h|^2}{|h|^{3+\alpha/2}}\d x\d h \le C\int_{B_1}|h|^{\alpha/2-2}\d h < +\infty.
\]
Thus $\D u \in W^{s,2}(B_{1/8})$ for $s = \frac{1}{2} + \frac{\alpha}{4}>\frac 1 2$,
which concludes the proof of the theorem.

\section{The algebraic inequality}\label{sec:algin}
The aim of this section is to  prove Theorem \ref{thm:keyineq}.\ As the proof is rather long and complex, we will divide it into several steps. Roughly speaking, and as mentioned in the previous section, the first addendum on the right-hand side of \eqref{eq:comp} should be thought of as a perturbation. Thus we will first prove the desired inequality \eqref{eq:comp} when $T(A)=T(B)$, and then perform a perturbative argument to prove the full result.

\subsection{Reduction to diagonal matrices}\label{sec:red}

In this first subsection, we show that the quantity $\Delta(A,B)$ introduced in \eqref{eq:defdelta} behaves well when considering the singular value decomposition of $A,B$.\ This will allow us to reduce some parts of the proof of Theorem \ref{thm:keyineq} to diagonal matrices. Precisely, we will prove the following:

\begin{proposition}\label{prop:diag}
Let $F\in C^1(\R^{2\times 2})$ be as in \eqref{eq:repF}--\eqref{eq:ell} and let $A,B\in \R^{2\times 2}$ be two matrices such that $T(A)=T(B)$, with corresponding singular value decompositions as in \eqref{eq:SVD}. Then
$$\Delta(A,B)\geq \Delta(D_A,D_B). $$
\end{proposition}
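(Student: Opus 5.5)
The plan is to reduce the inequality to a comparison of cross terms, align the right singular frames of $A$ and $B$ using $T(A)=T(B)$, and then bound the cross terms by an elementary von Neumann-type estimate.

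\textbf{Reduction to cross terms.} Expanding \eqref{eq:defdelta} and using Euler's identity $\langle F'(A),A\rangle = 2F(A)$ gives
$$\Delta(A,B) = 2F(A)+2F(B)-\langle F'(A),B\rangle - \langle F'(B),A\rangle .$$
Since $F(A)=F(D_A)$ and $F(B)=F(D_B)$ by Lemmas \ref{lemma:confinv}--\ref{lemma:frameind}, it suffices to show
$$\langle F'(A),B\rangle\le \langle F'(D_A),D_B\rangle, \qquad \langle F'(B),A\rangle\le \langle F'(D_B),D_A\rangle .$$
By symmetry only the first needs proof. By \eqref{eq:derF},
$$\langle F'(A),B\rangle = 2f_1(A)\langle A,B\rangle + f_2(A)\langle \cof A,B\rangle .$$

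\textbf{Aligning the right singular frames.} This is the key place where $T(A)=T(B)$ enters. By \eqref{eq:simpleT}, $T(A)$ is $-2f_1(A)>0$ times the traceless part of $A^TA = R_A^TD_A^2R_A$.
\begin{itemize}
\item If $T(A)=T(B)\neq 0$, the traceless parts of $A^TA$ and $B^TB$ are positive multiples of each other. Hence, with singular values ordered $a_1\ge a_2$ and $b_1\ge b_2$, the top eigenvectors coincide and we may take $R_A=R_B=:R$.
\item If $T(A)=T(B)=0$, then by Lemma \ref{lemma:zeroT} both matrices lie in $\CO\cup\ACO$. So $D_A=a\Id$ and $D_B=b\Id$, $R$ can be chosen freely, and again we may assume $R_A=R_B$.
\end{itemize}

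\textbf{Computing the cross terms.} Set $Q\equiv Q_A^TQ_B\in \OO(2)$, $D_A=\diag(a_1,a_2)$, $D_B=\diag(b_1,b_2)$ and $D_A'\equiv\diag(a_2,a_1)=\cof D_A$. Using $\cof(XY)=\cof X\cof Y$ and $\cof R=R$ for $R\in\SO(2)$, we get $\cof A = \det(Q_A)\,Q_A D_A' R$. Therefore
$$\langle A,B\rangle=\langle D_A, QD_B\rangle,\qquad \langle\cof A,B\rangle = \det(Q_A)\langle D_A',QD_B\rangle .$$

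\textbf{Signs of the coefficients.} Since $\det A=\det(Q_A)a_1a_2$ and $f_2$ is odd in its second variable, $f_2(A)\det Q_A = f_2(D_A)$. By \eqref{eq:ell} this is $\ge 0$, because $(|A|^2,a_1a_2)\in\{n\ge 2d\ge 0\}$. Also $f_1(A)=f_1(D_A)\ge\nu>0$.

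\textbf{Conclusion via von Neumann.} It remains to check that for diagonal $D,D'$ with nonnegative entries and $Q\in\OO(2)$ we have $\langle D,QD'\rangle\le\langle D,D'\rangle$. This is explicit:
\begin{itemize}
\item if $Q$ is a rotation by $\theta$, the left side is $\cos\theta\,(d_1d_1'+d_2d_2')$;
\item if $Q$ is a reflection, it is $\cos\varphi\,(d_1d_1'-d_2d_2')$.
\end{itemize}
In both cases the bound is immediate. Applying it with $(D,D')=(D_A,D_B)$ and $(D_A',D_B)$, and multiplying by the nonnegative coefficients $2f_1(D_A)$ and $f_2(D_A)$, yields $\langle F'(A),B\rangle\le\langle F'(D_A),D_B\rangle$.

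\textbf{Main obstacle.} The delicate points are the frame alignment and the sign bookkeeping. One must make sure that the ordering of singular values and the choice $R_A=R_B\in\SO(2)$ are compatible, with the possible orientation reversal absorbed into $Q_A,Q_B\in\OO(2)$. One must also confirm that the sign of $f_2$ is exactly compensated by $\det Q_A$.
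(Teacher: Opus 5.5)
Your proof is correct and follows essentially the paper's route: $T(A)=T(B)$ forces $A^TA$ and $B^TB$ to share eigenvectors, hence a common $R$, and the cross terms $\langle F'(A),B\rangle=\langle F'(D_A),Q_A^TQ_BD_B\rangle$ are then bounded by the special von Neumann inequality, using that $F'(D_A)$ is diagonal with nonnegative entries. The only difference is that you expand $F'$ via \eqref{eq:derF} and track $\det Q_A$ by hand instead of invoking \eqref{eq:invder}. Two small slips should be fixed: $-2f_1(A)$ is negative, not positive (what matters is $f_1(A),f_1(B)>0$, so the traceless parts are positive multiples of each other), and the case $A=0$ or $B=0$, where $f_i$ is undefined but both cross terms vanish, should be mentioned.
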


In order to prove this proposition, we begin with the following general observation:

\begin{lemma}[Special Von Neumann trace inequality]\label{lemma:VN}
For any $X,Y\in \Diag^+_m$ and $Q\in \tp O(m)$ we have
$\tr(X Q Y )\leq \tr(XY).$
\end{lemma}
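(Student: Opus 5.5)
Write $X=\diag(x_1,\dots,x_m)$ and $Y=\diag(y_1,\dots,y_m)$ with $x_i,y_j\geq 0$, and $Q=(q_{ij})$. The plan is to compute the trace entrywise and then bound the diagonal of $Q$. The only input needed is that $Q$ is orthogonal, so that $|q_{ii}|\leq 1$ for every $i$.

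First, since $X$ and $Y$ are diagonal,
$$\tr(XQY)=\sum_{i=1}^m x_i\, q_{ii}\, y_i .$$
Indeed, $(XQY)_{ii}=x_i q_{ii} y_i$. On the other hand, $\tr(XY)=\sum_i x_iy_i$.

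Next, each column of $Q$ is a unit vector, because $Q^TQ=\Id$. Hence $|q_{ii}|\leq 1$, and so $q_{ii}\leq 1$.

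Finally, the nonnegativity of $x_i$ and $y_i$ gives $x_iq_{ii}y_i\leq x_iy_i$ termwise. Summing over $i$ yields $\tr(XQY)\leq\tr(XY)$.

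There is no real obstacle here. The one point to watch is that the nonnegativity assumption $X,Y\in\Diag^+_m$ is exactly what lets us use the crude bound $q_{ii}\leq 1$ termwise. We do not need the full Von Neumann rearrangement argument, because the diagonal entries are not permuted: the comparison is with the same ordering of $X$ and $Y$.
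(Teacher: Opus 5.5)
Your proof is correct and is essentially the paper's argument: expand $\tr(XQY)=\sum_i x_i q_{ii} y_i$, bound $|q_{ii}|\leq 1$ by orthogonality of $Q$, and use $x_i,y_i\geq 0$ to compare termwise. The paper obtains the diagonal bound as $|Q_{ii}|=|Qe_i\cdot e_i|\leq |Qe_i|\,|e_i|\leq 1$, whereas you use that the columns of $Q$ are unit vectors; the two are equivalent.
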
 

\begin{proof}
Let $X=\diag(x_1,\dots, x_m), Y=\diag(y_1, \dots,y_m)$ where $x_i,y_i\geq 0$. Since $Q$ is orthogonal, we have $|Q_{ii}|= |Q e_i\cdot e_i| \leq \lvert Qe_i\rvert \lvert e_i\rvert \leq 1$ for all $i$. Thus
$$\tr(XQY) = \sum_{i=1}^m x_i y_i Q_{ii} \leq \sum_{i=1}^m x_i y_i= \tr(XY),$$
as wished.
\end{proof}

\begin{remark}
Recall the standard Von Neumann trace inequalities, cf.\  \cite[\S 13]{Dacorogna2007}: for $X,Y\in \Diag^+_m$ with \textit{ordered entries} and $Q,R\in \tp{O}(m)$, we have
$$\tr(XQYR)\leq \tr(XY).$$
Lemma \ref{lemma:VN} is somewhat surprising in that it asserts that, if $R=\Id$, then we do not need to assume that the entries of $X,Y$ are ordered.
\end{remark}

We now take $m=2$. 
Recall the singular value decomposition in \eqref{eq:SVD}.
As a consequence of Lemma \ref{lemma:VN}, we have:

\begin{lemma}\label{lemma:commonright}
Let $F\in C^1(\R^{2\times 2})$ be as in \eqref{eq:repF}--\eqref{eq:ell} and
suppose that $A,B\in \R^{2\times 2}$ have singular value decompositions with common right-singular vectors, i.e.\ $A=Q_AD_A R$, $B=Q_B D_B R.$
Then $$ \Delta(A,B) \geq \Delta(D_A, D_B).$$
\end{lemma}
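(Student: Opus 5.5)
We expand $\Delta(A,B)=\langle F'(A),A\rangle+\langle F'(B),B\rangle-\langle F'(A),B\rangle-\langle F'(B),A\rangle$ and handle the diagonal and cross terms separately. For the diagonal terms, Euler's identity gives $\langle F'(A),A\rangle=2F(A)$. Since $F(A)=F(D_A)$ by frame indifference and conformal invariance (Lemmas \ref{lemma:confinv} and \ref{lemma:frameind}), these terms coincide with those of $\Delta(D_A,D_B)$. It therefore suffices to show
$$\langle F'(A),B\rangle\le \langle F'(D_A),D_B\rangle,\qquad \langle F'(B),A\rangle\le\langle F'(D_B),D_A\rangle.$$

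For the first cross term we use the equivariance \eqref{eq:invder}, which gives $F'(A)=Q_AF'(D_A)R$ and $B=Q_BD_BR$. Hence
$$\langle F'(A),B\rangle=\tr\big(R^TF'(D_A)^TQ_A^TQ_BD_BR\big)=\tr\big(F'(D_A)^T Q\, D_B\big),\qquad Q\equiv Q_A^TQ_B\in\OO(2),$$
using invariance of the trace under conjugation by $R$. To apply Lemma \ref{lemma:VN} with $X=F'(D_A)^T$ and $Y=D_B$, we need $F'(D_A)\in\Diag_2^+$. By \eqref{eq:derF}, $F'(D_A)=2f_1(D_A)D_A+f_2(D_A)\cof D_A$. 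Since $D_A$ is diagonal, so is $\cof D_A=\diag(\lambda_2,\lambda_1)$, and the entries of $F'(D_A)$ are $2f_1\lambda_1+f_2\lambda_2$ and $2f_1\lambda_2+f_2\lambda_1$. As $\det D_A=\lambda_1\lambda_2\ge 0$, the point $(|D_A|^2,\det D_A)$ lies in $\{n\ge 2d\ge0\}$, where \eqref{eq:ell} gives $f_1\ge\nu>0$ and $f_2\ge 0$. Both entries are therefore nonnegative; the case $D_A=0$ is trivial because $F'(0)=0$. Lemma \ref{lemma:VN} then yields $\langle F'(A),B\rangle\le\tr(F'(D_A)D_B)=\langle F'(D_A),D_B\rangle$. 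The second cross term follows by exchanging $A$ and $B$.

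The only delicate points are two. First, the sign conditions in \eqref{eq:ell} are required only on the region with $\det\ge0$, and this is what makes $F'(D_A)$ nonnegative diagonal. Second, the common right factor $R$ must cancel, since Lemma \ref{lemma:VN} allows an orthogonal matrix on only one side. The hypothesis of a shared right-singular factor is exactly what makes this cancellation possible, so I expect no real obstacle beyond checking these two points.
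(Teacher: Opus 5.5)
Your proof is correct and follows the paper's argument. You expand $\Delta(A,B)$, use the equivariance \eqref{eq:invder} to reduce each cross term to $\tr(F'(D_A)QD_B)$ with $Q=Q_A^TQ_B$, check $F'(D_A)\in\Diag_2^+$ via \eqref{eq:derF} and \eqref{eq:ell}, and apply Lemma \ref{lemma:VN}. The only cosmetic difference is that you handle the diagonal terms through Euler's identity and the invariance of $F$, whereas the paper applies \eqref{eq:invder} to them directly.
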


\begin{proof}
Indeed, recalling the definition \eqref{eq:defdelta},  we expand
$$\Delta(A,B)= \langle F'(A),A\rangle + \langle F'(B),B\rangle - \langle F'(A),B\rangle - \langle F'(B),A\rangle.$$
By \eqref{eq:invder}, the first term clearly satisfies
$$ \langle F'(A),A\rangle = \langle Q_A F'(D_A)R,Q_A D_AR\rangle= \langle F'(D_A),D_A\rangle,
$$
and similarly for the second term.
Thus it suffices to consider the cross terms. Again by \eqref{eq:invder} we have
$$\langle F'(A),B\rangle = \langle Q_A F'(D_A)R, Q_B D_BR\rangle = \langle F'(D_A), QD_B \rangle,
%g_d(A) \langle \cof(R_A D_A R_A), B\rangle = g_d(A) \langle \cof(R_A) \cof(D_A) \cof(R_A), B\rangle.
$$
where $Q\equiv Q_A^T Q_B$. By \eqref{eq:derF} we have
$$F'(D_A) = 2f_1(D_A)D_A + f_2(D_A) \cof(D_A)$$
and thus by \eqref{eq:ell} we also have $F'(D_A)\in \Diag_2^+$. It follows from Lemma \ref{lemma:VN} that
$$\langle F'(A),B\rangle \leq \langle F'(D_A),D_B\rangle$$
and, by swapping the roles of $A,B$, also $\langle F'(B),A\rangle \leq \langle F'(D_B),D_A\rangle$, proving the lemma.
\end{proof}

\begin{proof}[Proof of Proposition \ref{prop:diag}]	
We can suppose that $A, B\neq 0$: for instance, if $B=0$ then by homogeneity and frame-indifference of $F$ we have
$$\Delta(A,B)=\langle F'(A),A\rangle = 2 F(A) = 2 F(D_A).$$
Thus, when $A,B\neq 0$, by Lemma \ref{lemma:commonright} it suffices to show that the condition $T(A)=T(B)$ guarantees that $A,B$ have common right-singular vectors.\ To see this, let us write $C_A = A^T A, C_B = B^T B$ for their right Cauchy--Green tensors. By \eqref{eq:simpleT},  the condition $T(A)=T(B)$  is equivalent to
$$\alpha \, \Id = f_1(A) C_A-f_1(B) C_B,\; \text{ where } \alpha \equiv\frac{f_1(A)|A|^2-f_1(B)|B|^2}{2}.$$
%where $\alpha\in \R$ is given by
%$$\alpha \equiv\frac{f_1(A)|A|^2-f_1(B)|B|^2}{2}.$$
Recalling that $f_1(A),f_1(B)>0$ by \eqref{eq:ell}, by the previous identity we immediately get that $C_A$ and $C_B$ share the same eigenvectors.\ Recalling that, in the singular value decomposition $A=Q_A D_A R_A$, the rows of $R_A$ can be chosen to be eigenvectors of $C_A$, we then readily obtain the two singular value decompositions
$$A = Q_A D_A R, \qquad B = Q_B D_B R$$
for some $Q_A,Q_B\in \tp{O}(2)$, as claimed.
\end{proof}

\subsection{The quasiconformal case}\label{sec:qc}

In order to proceed, we note the following elementary result:

\begin{lemma}
	Let $F\in C^1(\R^{2\times 2})$ be as in \eqref{eq:repF}--\eqref{eq:ell}.  For $A,B\in \R^{2\times 2}\setminus \{0\}$ we have
	\begin{equation}\label{e:DELTAAB}
		\Delta(A,B)\geq 2\nu |A-B|^2+ \left( f_2(A)+ f_2(B)\right)\det(A-B).
	\end{equation}
\end{lemma}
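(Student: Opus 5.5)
The plan is to expand $\Delta(A,B)$ using the explicit formula \eqref{eq:derF} for $F'$, split it into a "symmetric" part and an "antisymmetric" part, and control the antisymmetric part using the convexity of $f$ (monotonicity of $\nabla f$).

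\emph{Step 1: the $f_1$ terms.} Abbreviating $f_i(A), f_i(B)$ as $f_i^A, f_i^B$, I first expand the contribution of $2f_1 A$:
\[
2\langle f_1^A A - f_1^B B, A-B\rangle = (f_1^A+f_1^B)|A-B|^2 + (f_1^A-f_1^B)(|A|^2-|B|^2).
\]
This is a direct check: both sides equal $2f_1^A|A|^2 - 2(f_1^A+f_1^B)\langle A,B\rangle + 2f_1^B|B|^2$.

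\emph{Step 2: the $f_2$ terms.} For the cofactor terms, I use $\langle \cof A, A\rangle = 2\det A$ and the symmetry $\langle \cof A, B\rangle = \langle \cof B, A\rangle$. By \eqref{eq:quadratic} applied to $A+(-B)$, we have $\det(A-B)=\det A - \langle \cof A,B\rangle + \det B$. Together these give
\[
\langle f_2^A \cof A - f_2^B\cof B, A-B\rangle = (f_2^A+f_2^B)\det(A-B) + (f_2^A-f_2^B)(\det A-\det B).
\]

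\emph{Step 3: combine.} Adding Steps 1 and 2,
\[
\Delta(A,B) = (f_1^A+f_1^B)|A-B|^2 + (f_2^A+f_2^B)\det(A-B) + \big\langle \nabla f(x_A)-\nabla f(x_B), x_A-x_B\big\rangle,
\]
where $x_A=(|A|^2,\det A)$ and $x_B=(|B|^2,\det B)$. The last bracket is nonnegative by convexity of $f$. To see this, add the two supporting-hyperplane inequalities $f(x_B)\ge f(x_A)+\nabla f(x_A)\cdot(x_B-x_A)$ and the same inequality with $A,B$ swapped.

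\emph{Step 4: the lower bound on $f_1$.} It then remains to show $f_1\ge \nu$ at every point $(|A|^2,\det A)$ with $A\neq0$. The hypothesis \eqref{eq:ell} only gives this in $\{n\ge 2d\ge0,\ n>0\}$. Since $f$ is even in $d$, $f_1$ is even in $d$ as well, so the bound extends to all of $\{n\ge 2|d|,\ n>0\}$, which contains every $x_A$ with $A\neq 0$. This yields $(f_1^A+f_1^B)\ge 2\nu$ and hence \eqref{e:DELTAAB}.

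\emph{Main obstacle.} The only delicate point is justifying the convexity step on the relevant domain. The points $x_A, x_B$ lie in the convex cone $\{n\ge 2|d|\}$, and $f$ is convex there. It is $C^1$ near both points because $n>0$ there, so $f$ is smooth away from $0$. Therefore the gradient-monotonicity inequality applies. I expect the rest to be routine algebra.
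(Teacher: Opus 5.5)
Your proof is correct and follows the paper's argument essentially verbatim. You expand $\Delta(A,B)$ via \eqref{eq:derF}, rewrite the $f_1$- and cofactor-terms through the same two algebraic identities, discard $\langle \nabla f(x_A)-\nabla f(x_B),x_A-x_B\rangle\geq 0$ by convexity of $f$, and conclude with $f_1\geq \nu$. Your extension of this last bound to $\{n\geq 2|d|,\ n>0\}$ via evenness of $f$ in $d$ is a step the paper leaves implicit when it invokes \eqref{eq:ell}.
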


\begin{proof}
	By \eqref{eq:derF}, we can compute
	\begin{align*}
		\Delta(A,B)& = 2\langle  f_1(A) A- f_2(B) B,A-B\rangle +  \langle  f_2(A) \cof A- f_2(B)\cof B,A-B\rangle.
	\end{align*}
	For $\alpha, \beta\in\R$, using \eqref{eq:A}--\eqref{eq:quadratic} we find the algebraic identities
	\begin{align*}
		\langle \alpha A-\beta B,A-B\rangle & = \frac{\alpha +\beta}{2} |A-B|^2 + \frac{\alpha-\beta}{2} \left(|A|^2-|B|^2\right),\\
		\langle \alpha \cof A-\beta \cof B, A-B\rangle & =( \alpha + \beta) \det (A-B) +(\alpha-\beta) \left(\det A-\det B\right),
	\end{align*}
	and so we can rewrite
	\begin{align*}
		\Delta(A,B) = & \left( f_1(A) +	 f_1(B)\right) |A-B|^2 + \left( f_2(A)+ f_2(B)\right) \det(A-B)\\
		&  + \left( f_1(A) - f_1(B)\right)(|A|^2-|B|^2)+\left(  f_2(A)- f_2(B)\right)( \det A-\det B ).
	\end{align*}
Now, the second line is non-negative by convexity of $f$.\ This fact and \eqref{eq:ell} give the claim.
\end{proof}

Thus, in order to produce lower bounds on $\Delta(A,B)$, we are naturally led to look for lower bounds on $\det(A-B)$. The next lemma, which is of independent interest, does precisely so in the case where $\det A,\det B\geq 0$ and $T(A)=T(B)$. While preparing this article we discovered that a similar result had been shown in the particular case of the Dirichlet energy in \cite[\S 9.1]{Iwaniec2013a}.

\begin{lemma}\label{l:qc}
Let $F\in C^1(\R^{2\times 2})$ be as in \eqref{eq:repF}--\eqref{eq:ell}.\ Let $A,B\in \R^{2\times 2}$ be such that  $T(A) = T(B)$ and $\det(A),\det(B) \ge 0$.\ We have
$$\frac 1 2 \frac{1-k}{1+k} |A-B|^2\leq \det(A-B), \qquad \text{where } k \equiv  \begin{cases}\sqrt{\frac{|B|^2-2 \det B}{|B|^2+2 \det B}} \in [0,1] & \text{ if $B \neq 0$},\\
 0 & \text{ if }B = 0.\end{cases}$$
\end{lemma}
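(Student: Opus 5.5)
The plan is to pass to complex notation, where both hypotheses and the conclusion become statements about the conformal and anticonformal parts. Write $a_\pm$ and $b_\pm$ for $[A]_{\CO}, [A]_{\ACO}$ and $[B]_{\CO},[B]_{\ACO}$. By \eqref{eq:alg},
$$\det(A-B)=X-Y,\qquad |A-B|^2=2(X+Y),\qquad X\equiv|a_+-b_+|^2,\quad Y\equiv|a_--b_-|^2 .$$
Also $|B|^2\mp 2\det B$ equals $4|b_\mp|^2$, so $k=|b_-|/|b_+|$ when $B\neq0$. A one-line rearrangement shows that the claimed inequality $\tfrac12\tfrac{1-k}{1+k}\,2(X+Y)\le X-Y$ is equivalent to
$$|a_--b_-|^2\le k\,|a_+-b_+|^2 .$$
This is the target. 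The case $B=0$ is immediate: $T(A)=0$ forces $A\in\CO$ by Lemma \ref{lemma:zeroT} together with $\det A\ge0$, so $a_-=0$. The case $A=0$ is equally trivial, since then $|b_-|^2 = k^2|b_+|^2\le k|b_+|^2$.

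Next I translate $T(A)=T(B)$. The matrix $|A|^2\Id-2A^TA$ is symmetric and traceless, and a direct computation identifies it, up to a fixed real constant and conjugation, with the complex number $\overline{a_+}\,a_-$. Hence $T(A)=T(B)$ reads
$$f_1(A)\,\overline{a_+}a_-=f_1(B)\,\overline{b_+}b_- .$$
Write $a_-=k'\sigma a_+$ and $b_-=k\sigma' b_+$ with $|\sigma|=|\sigma'|=1$ and $k'\in[0,1]$ (using $\det A\ge0$). If $k,k'>0$, the identity forces $\sigma=\sigma'$. Then $a_--b_-=\sigma(k'a_+-kb_+)$, and the target becomes
$$|k'a_+-kb_+|^2\le k|a_+-b_+|^2 .$$
Set $s=|a_+|$ and $r=|b_+|$, and expand using $\Re(a_+\overline{b_+})\le sr$ together with $1-k'\ge0$. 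The worst case is $a_+$ parallel to $b_+$, so it suffices to show
$$|k's-kr|\le\sqrt k\,|s-r| .$$

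The remaining information is the modulus of the $T$-identity. By the $1$-homogeneity in \eqref{eq:repF}, $f_1$ is $0$-homogeneous. When $\det\ge0$ it is therefore a function $g$ of the distortion ratio alone, i.e.\ $f_1(A)=g(k')$ and $f_1(B)=g(k)$. The identity then gives
$$g(k')\,k'\,s^2=g(k)\,k\,r^2 .$$
I would show, using convexity of $f$ restricted to the segment $n=1$ on which $d$ varies, that $g$ is nondecreasing in the distortion, so that $\kappa\mapsto g(\kappa)\kappa$ is strictly increasing. This yields the ordering information: $k'\ge k$ implies $k's^2\le kr^2$ and $s\le r$, and symmetrically when $k'\le k$.

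The main obstacle is the final one-variable inequality $|k's-kr|\le\sqrt k|s-r|$ under this constraint. The direction $kr-k's\le k(r-s)\le\sqrt k(r-s)$ is easy. The opposite direction is where the monotonicity of $g$, and possibly finer information from \eqref{eq:ell}, must be exploited. If the crude bound $k's^2\le kr^2$ turns out to be insufficient, I would go back and avoid discarding $\Re(a_+\overline{b_+})$. Instead I would keep the full quadratic form in $(a_+,b_+)$ and verify that it is negative semidefinite under the exact relation $g(k')k's^2=g(k)kr^2$, checking equality at $k=k'$, $s=r$ as a consistency test.
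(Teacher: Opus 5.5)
\paragraph{Assessment.} Up to the last step your argument is sound and is essentially the paper's proof. Your target $|a_--b_-|^2\le k|a_+-b_+|^2$ is exactly \eqref{eq:conc1}. The phase alignment $a_-=k'\sigma a_+$, $b_-=k\sigma b_+$ carries the same information as the paper's identity $\Re(a_-\overline{b_-})=\theta\,\Re(a_+\overline{b_+})$; in your notation, $\theta=kk'$. Your ordering ($k'\ge k\Rightarrow k's^2\le kr^2$, and the reverse) is the paper's use of the monotonicity of $f_1$ in the distortion. The monotonicity claim is correct, but it needs homogeneity as well as convexity along $n=1$: differentiating Euler's identity $f=nf_1+df_2$ in $d$ gives $\partial_d f_1(1,d)=-d\,f_{22}(1,d)\le 0$.

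\paragraph{The gap.} You never prove the final inequality $|k's-kr|\le\sqrt k\,|s-r|$, and you leave open whether the ordering suffices. It does, so the fallback to the full quadratic form is unnecessary.

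\paragraph{How to close it.} The cleanest fix, and what the paper does, is not to pass to the worst case $\Re(a_+\overline{b_+})=sr$. Instead use the cruder bound $2\Re(a_+\overline{b_+})\le s^2+r^2$. This is allowed because the coefficient $2k(1-k')$ of $\Re(a_+\overline{b_+})$ is nonnegative. Then
\[
k|a_+-b_+|^2-|k'a_+-kb_+|^2\ \ge\ k(s^2+r^2)-k'^2s^2-k^2r^2-k(1-k')(s^2+r^2)=(k-k')\,(k's^2-kr^2),
\]
which is nonnegative precisely by your ordering.

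If you prefer to keep your sharper reduction, it also closes directly. For $k'\ge k$, put $x=\sqrt{k'}\ge y=\sqrt k$. Then $k's^2\le kr^2$ gives $s\le (y/x)\,r$. The remaining direction $(k'+\sqrt k)\,s\le(k+\sqrt k)\,r$ then reduces to $(1-x)(x-y)\ge 0$, which holds since $y\le x\le 1$. The case $k'\le k$ is symmetric.

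\paragraph{Degenerate case.} Your phase argument needs $k,k'>0$. If $k=k'=0$, then $T(A)=T(B)=0$ and both matrices are conformal, so $a_-=b_-=0$ and the claim is trivial. Since $f_1>0$ and $a_+,b_+\neq 0$, the relation $T(A)=T(B)$ rules out exactly one of $k,k'$ vanishing.
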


\begin{proof}
For this proof it is convenient to work with complex notation.\ Set $a_+ \equiv [A]_{\mathcal{H}}$, $a_- \equiv [A]_{\overline{\mathcal{H}}}$, and analogously for $B$.\  
We can suppose that $T(A)=T(B)\neq 0$: otherwise, by Lemma \ref{lemma:zeroT} and the assumption $\det A, \det B\geq 0$, we have $A,B\in \CO$, but then by \eqref{eq:alg} we have $\frac 1 2 |A-B|^2 =\det(A-B)$ and there is nothing to show. 
Therefore, we can suppose from the start that
\begin{equation}\label{eq:intcase}
a_-,a_+,b_-,b_+ \neq 0.
\end{equation}
Then the requirement $T(A) = T(B)$ reads, in complex notation, as:
\begin{equation}
	\label{eq:modhopf}
	f_1(A)a_+\overline{a_-} = f_1(B)b_+ \overline{b_-}.
\end{equation}
Through \eqref{eq:alg} we see that our assumptions $\det A,\det B\geq 0$ are equivalent to
\begin{equation}\label{eq:qr}
|a_-| \le |a_+|,\quad |b_-| \le k |b_+|.
\end{equation}
To prove the lemma, it suffices to prove the estimate
\begin{equation}\label{eq:conc1}
|a_-|^2 +|b_-|^2 - 2 \Re(a_-\overline{b_-}) \le k(|a_+|^2 +|b_+|^2 - 2 \Re(a_+\overline{b_+})).
\end{equation}
Multiplying \eqref{eq:modhopf} by $a_-\overline{b_+}$,
and taking the real part of the expression, we get that
\[
f_1(A)|a_-|^2\Re(a_+\overline{b_+}) = f_1(B)|b_+|^2\Re(a_-\overline{b_-}).
\]
By \eqref{eq:intcase} we can rewrite this as
\begin{equation}\label{eq:RE}
\Re(a_-\overline{b_-}) = \frac{f_1(A)|a_-|^2}{f_1(B)|b_+|^2}\Re(a_+\overline{b_+}).
\end{equation}
%From \eqref{eq:modhopf}
Moreover we note that, from \eqref{eq:modhopf} and \eqref{eq:qr}, we have the estimate
\[
f_1(A)|a_-|^2 \le f_1(A)|a_+||a_-| = f_1(B)|b_+||b_-| \le kf_1(B)|b_+|^2,
\]
so that
\begin{equation}\label{e:AB}
\frac{f_1(A)|a_-|^2}{f_1(B)|b_+|^2} \le k
\end{equation}
Plugging \eqref{eq:RE} into \eqref{eq:conc1}, we deduce that \eqref{eq:conc1} is equivalent to:
\begin{equation}\label{eq:conc2}
k(|a_+|^2 +|b_+|^2) - 2\Re(a_+\overline{b_+})\left(k -\frac{f_1(A)|a_-|^2}{f_1(B)|b_+|^2}\right) \ge |a_-|^2 + |b_-|^2.
\end{equation}
Due to Young's inequality and \eqref{e:AB}, we find that
\[
- 2\Re(a_+\overline{b_+})\left(k -\frac{f_1(A)|a_-|^2}{f_1(B)|b_+|^2}\right) \ge -(|a_+|^2+|b_+|^2)\left(k -\frac{f_1(A)|a_-|^2}{f_1(B)|b_+|^2}\right).
\]
Thus, the left-hand side of \eqref{eq:conc2} is bounded below by
\begin{align*}
k(|a_+|^2 +|b_+|^2) - 2\Re(a_+\overline{b_+})\left(k -\frac{f_1(A)|a_-|^2}{f_1(B)|b_+|^2}\right) &\ge \frac{f_1(A)|a_-|^2}{f_1(B)|b_+|^2}(|a_+|^2 +|b_+|^2) \\
&= \frac{f_1(B)}{f_1(A)}|b_-|^2 +  \frac{f_1(A)}{f_1(B)}|a_-|^2,
\end{align*}
where the last line follows from \eqref{eq:modhopf}.\ To show \eqref{eq:conc2}, we only need to bound this last expression from below by $|a_-|^2 + |b_-|^2$.\ Thus, we can consider the difference between the last expression and $|a_-|^2 + |b_-|^2$:
\begin{equation}\label{eq:f1}
\frac{f_1(B)}{f_1(A)}|b_-|^2 +  \frac{f_1(A)}{f_1(B)}|a_-|^2 - |a_-|^2 - |b_-|^2 = \left(f_1(B) - f_1(A)\right)\left(\frac{|b_-|^2}{f_1(A)} - \frac{|a_-|^2}{f_1(B)}\right).
\end{equation}
To conclude \eqref{eq:conc2} and hence the proof, we only need to show that this quantity is non-negative.\ Observe that, due to \eqref{eq:repF},
\[
f_1(A) = f_1\left(\frac{|a_+|^2 + |a_-|^2}{|a_+|^2 - |a_-|^2},\frac{1}{2}\right),
\]
and, due to \eqref{eq:ell}, we also have  that $f_1$ is increasing in its first argument.\ Therefore, $f_1(B) \ge f_1(A)$ is equivalent to having 
\[
\frac{|b_+|^2 + |b_-|^2}{|b_+|^2 - |b_-|^2} \geq \frac{|a_+|^2 + |a_-|^2}{|a_+|^2 - |a_-|^2} \quad  \iff \quad  \frac{|b_-|}{|b_+|}\geq \frac{|a_-|}{|a_+|} 
\quad \iff\quad \frac{|a_-|^2}{f_1(B)} \le \frac{|b_-|^2}{f_1(A)}
\]
where the last equivalence follows from \eqref{eq:modhopf} and the fact that $f_1\geq 0$ by \eqref{eq:ell}.
Therefore the expression  in \eqref{eq:f1} is non-negative and this concludes the proof. 
\end{proof}

\subsection{The inequality over level sets of $T$}\label{sec:last}

Combining our previous results, we can show that inequality \eqref{eq:comp} holds over level sets of $T$:

\begin{proposition}\label{prop:almost}
	Let $F\in C^1(\R^{2\times 2})$ be as in \eqref{eq:repF}--\eqref{eq:ell}.\ There is a constant $c_F>0$ such that, if $A,B\in \R^{2\times 2}$ fulfill $T(A) = T(B)$, then 
	\begin{equation}\label{eq:Delta}
		\Delta(A,B) \geq c_F|A-B|^2, \quad \forall A,B \in \R^{2\times 2}.
	\end{equation}
\end{proposition}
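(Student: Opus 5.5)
We prove \eqref{eq:Delta} by homogeneity and compactness, using the two previous subsections to show $\Delta(A,B)>0$ whenever $A\neq B$. Both sides of \eqref{eq:Delta} are $2$-homogeneous, since $F'$ is $1$-homogeneous and $T$ is $2$-homogeneous. Moreover, the constraint $T(A)=T(B)$ is scale invariant, and $\Delta$ and $T$ are continuous on $\R^{2\times 2}$. So it suffices to show that $\Delta(A,B)>0$ on the compact set
$$K\equiv\{(A,B):\ T(A)=T(B),\ \max\{|A|,|B|\}=1\}$$
minus the diagonal $\{A=B\}$. The difficulty is the diagonal itself, where $\Delta$ and $|A-B|^2$ both vanish. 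So I would first obtain a quantitative bound directly in the main cases, and use compactness only where $A$ and $B$ stay apart.

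\emph{Same-sign case.} Suppose $\det A,\det B\geq 0$. Since $f_2$ is odd in its last variable and nonnegative on $\{n\ge 2d\ge0\}$, we have $f_2(A)+f_2(B)\ge0$. By Lemma \ref{l:qc}, $\det(A-B)\ge0$. Then \eqref{e:DELTAAB} gives directly $\Delta(A,B)\ge 2\nu|A-B|^2$; if one of the matrices is $0$, the same bound follows from Euler's identity $\Delta(A,0)=2F(A)=2f_1(A)|A|^2+2f_2(A)\det A\ge 2\nu|A|^2$. The case $\det A,\det B\le 0$ reduces to this one. Indeed, left multiplication by $Q=\diag(1,-1)$ preserves $T$ by \eqref{eq:simpleT}, preserves $\Delta$ by \eqref{eq:invder}, and preserves $|A-B|$.

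\emph{Mixed-sign case.} Suppose $\det A>0>\det B$. Here I would use Proposition \ref{prop:diag} and its proof. The constraint $T(A)=T(B)$ gives a common $R$, with $A=Q_AD_AR$ and $B=Q_BD_BR$, so that $Q\equiv Q_A^TQ_B\in\OO_-(2)$, i.e.\ $Q$ is a reflection with $Q_{11}=\cos\theta$ and $Q_{22}=-\cos\theta$. As in the proof of Lemma \ref{lemma:commonright},
$$\Delta(A,B)=\langle F'(D_A),D_A\rangle+\langle F'(D_B),D_B\rangle-\tr(F'(D_A)QD_B)-\tr(F'(D_B)QD_A),$$
while $|A-B|^2=|D_A|^2+|D_B|^2-2\tr(D_AQD_B)$. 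Write $F'(D_A)=\diag(p_1,p_2)$ and $D_A=\diag(x_1,x_2)$, and similarly $F'(D_B)=\diag(q_1,q_2)$ and $D_B=\diag(y_1,y_2)$, all entries nonnegative. The cross terms then equal $\cos\theta\,(p_1y_1-p_2y_2)$, and similarly for the other pair, so they are strictly smaller than the bound $\sum p_iy_i$ from Lemma \ref{lemma:VN} unless the relevant products vanish. Combining this with $\Delta(D_A,D_B)\ge2\nu|D_A-D_B|^2$ from the same-sign case, I expect $\Delta(A,B)=0$ to force $D_A=D_B$ and equality in the von Neumann step. That is incompatible with $\det A>0>\det B$, unless both matrices are rank one with the same singular values, and in that case a direct check gives $A=B$. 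Hence $\Delta>0$ on the mixed-sign part of $K$ away from the diagonal.

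\emph{Closing the argument.} The same-sign parts already have the explicit constant $2\nu$. The mixed-sign part lies in the closed set where $\det A\ge0\ge\det B$. A limit of mixed-sign pairs with $A=B$ must satisfy $\det A=0$. Near such a limit I would check that the pair can be perturbed, or treated with \eqref{e:DELTAAB} using that $\det(A-B)$ is small compared with $|A-B|^2$, to get a uniform lower bound. Away from such limits, continuity and compactness of $K$ give a positive minimum of $\Delta/|A-B|^2$, and taking the smaller of this minimum and $2\nu$ gives $c_F$. The main obstacle is this degenerate regime: pairs with determinants of opposite sign that approach a common rank-one matrix. There the von Neumann gain is only of the order of $\det$, and it must be shown to dominate $|A-B|^2$. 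I would attack it with the explicit diagonal formula above, expanding in the small singular values $x_2,y_2$.
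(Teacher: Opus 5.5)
Your reductions and the qualitative part are sound and parallel the paper. The same-sign case goes via Lemma~\ref{l:qc} and \eqref{e:DELTAAB}. For $\det A>0>\det B$, your argument that $\Delta(A,B)=0$ forces $D_A=D_B$ and then $A=B$ is the paper's \eqref{eq:cla1}.

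\textbf{The gap.} The step that actually produces the constant is missing; you leave it at ``I would check'' and ``I would attack''. Positivity of $\Delta$ off the diagonal plus compactness of $K$ gives no uniform bound near diagonal points $(A_\infty,A_\infty)$ with $\det A_\infty=0$ and $|A_\infty|=1$. There $\Delta$ and $|A-B|^2$ vanish together. The mechanisms you suggest for this regime are unproven and aim at the wrong quantity: you do not need $\det(A-B)$ to be small relative to $|A-B|^2$.

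\textbf{The missing observation.} $f_2$ is odd in $d$ and continuous away from $0$, so $f_2(A_\infty)=0$, and $f_2(A),f_2(B)$ are small near such a limit. Then \eqref{e:DELTAAB} and the trivial bound $|\det X|\le\frac12|X|^2$ give
$\Delta(A,B)\ge\bigl(2\nu-\tfrac12|f_2(A)|-\tfrac12|f_2(B)|\bigr)|A-B|^2\ge\nu|A-B|^2$
in a neighbourhood of every such point. After that your compactness argument goes through. This is exactly how the paper concludes: a contradicting sequence gives $F''(A_\infty)[Z,Z]\le 0$, whereas \eqref{e:DELTAAB} with $f_2(A_\infty)=0$ gives $F''(A_\infty)[Z,Z]\ge 2\nu$.

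\textbf{Your diagonal formula already finishes the proof.} It closes the argument globally, without compactness. For any $Q=Q_A^TQ_B\in\OO(2)$ it gives
\[
\Delta(A,B)-\Delta(D_A,D_B)=\sum_{i=1}^2(1-Q_{ii})(p_iy_i+q_ix_i),\qquad |A-B|^2-|D_A-D_B|^2=2\sum_{i=1}^2(1-Q_{ii})x_iy_i .
\]
Since $f_2(D_A),f_2(D_B)\ge 0$, you have $p_1=2f_1(D_A)x_1+f_2(D_A)x_2\ge 2\nu x_1$, and likewise $p_2\ge 2\nu x_2$ and $q_i\ge 2\nu y_i$. Because $1-Q_{ii}\ge 0$, the first difference is at least $2\nu$ times the second. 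Add $\Delta(D_A,D_B)\ge 2\nu|D_A-D_B|^2$, which follows from Lemma~\ref{l:qc} applied to $D_A,D_B$ (they satisfy $T(D_A)=T(D_B)$) together with \eqref{e:DELTAAB}. This gives $\Delta(A,B)\ge 2\nu|A-B|^2$ whenever $T(A)=T(B)$; a zero matrix is handled by Euler's identity as you did.

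So the von Neumann gain is not too small: term by term, it dominates exactly the part of $|A-B|^2$ that $|D_A-D_B|^2$ misses. Completed this way, your route would even give the explicit constant $c_F=2\nu$ and avoid the contradiction argument. As written, however, the quantitative step is absent.
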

\begin{proof}
Note that we can suppose that $T(A)=T(B)\neq 0$. Otherwise, by \eqref{eq:F'conf}, we have
$$\Delta(A,B)=c_F |A-B|^2, \qquad c_F = 2f_1(\Id) + f_2(\Id).$$
	As in the proof of Proposition \ref{prop:diag}, the relation $T(A) = T(B)$ implies that we may write $A = Q_AD_AR$, $B = Q_BD_BR$, with $D_A,D_B\in \Diag_2^+$ and $Q_A,Q_B,R \in \OO(2)$.\ Exploiting \eqref{eq:invder},\ and through Lemma \ref{lemma:commonright}, we can assume without loss of generality that 
	$$A = QD_A, \qquad B = D_B,$$ for $D_A,D_B \in \Diag_2^+$ and $Q \in \OO(2)$.\ If $Q \in \SO(2)$ then Lemma \ref{l:qc} implies that $\det(A-B) \ge 0$, and \eqref{eq:Delta} follows then from \eqref{e:DELTAAB} and the fact that $f_2(D_A),f_2(QD_B)\geq 0$ by \eqref{eq:ell}.\  Hence we can assume $Q \in \OO_-(2)$.
	
	Under the previous simplifications, we start by showing that
	\begin{equation}\label{eq:cla1}
		\Delta(A,B) \ge 0 \text{ with equality if and only if } A = B.
	\end{equation}
	From Proposition \ref{prop:diag}, we know that
	\[
	\Delta(A,B) \ge \Delta(D_A,D_B) \ge 2\nu|D_A-D_B|^2,
	\]
	where the last inequality holds in view of the positivity of the determinants of $D_A$ and $D_B$, Lemma \ref{l:qc}, \eqref{eq:ell} and \eqref{e:DELTAAB}.\ Thus we only need to understand the equality case in \eqref{eq:cla1}. Assuming $\Delta(A,B) = 0$, then from the previous inequality we find that $D_A = D_B$.\ In that case,
	\begin{equation}\label{eq:idqt}
		\Delta(A,B) \overset{\eqref{eq:invder}}{=} \langle (\Id - Q)F'(D_A),(\Id - Q)D_A \rangle = 0.
	\end{equation}
	By \eqref{eq:derF} and \eqref{eq:A}, 
	\[
	F'(D_A)D_A = 2f_1(D_A)D_A^2 + f_2(D_A)\cof(D_A)D_A = 2f_1(D_A)D_A^2 + f_2(D_A)\det(D_A)\Id.
	\]
Recall once again that $f_2$ is an odd function in the second variable with $f_2(A)\geq 0$ when $\det A\geq0$. Therefore, if $\det(D_A) > 0$, then $F'(D_A)D_A $ is a positive definite matrix, and \eqref{eq:idqt} readily shows $\Id = Q$, which is impossible since $Q \in \OO_-(2)$.\ If $\det(D_A) = 0$ then since $D_A=D_B$ also $\det(B) = 0$.\ In that case, since $f$ is even in the second variable by \eqref{eq:repF}, we see that $f_2(A) = f_2(B) = 0$, and applying \eqref{e:DELTAAB} we see that again $A = B$.\  Hence our claim \eqref{eq:cla1} holds.\ 

\medskip
	We can now turn to proving \eqref{eq:Delta}.\ Assuming the inequality is false we find sequences $A_j,B_j$ for which $T(A_j) = T(B_j)$ and 
	\begin{equation}\label{eq:contra}
		\Delta(A_j,B_j) \le \frac{1}{j} |A_j-B_j|^2, \quad \forall j \in \N.
	\end{equation}
	By the discussion at the beginning of the proof we can assume $\det(A_j)\le 0$ and $\det(B_j) \ge 0$. By homogeneity of \eqref{eq:contra}, we can suppose without loss of generality that $|A_j| +|B_j| =1$, $\forall j \in \N$.\ By considering a non-relabeled subsequence, we can also assume the existence of the limits
	\[
	A_\infty = \lim_{j\to \infty}  A_j, \quad B_\infty = \lim_{j\to \infty} B_j, \quad  Z \equiv \lim_{j\to \infty}  \frac{A_j - B_j}{|A_j - B_j|}.
	\]
	Note that, by \eqref{eq:contra}, we have $\Delta(A_\infty,B_\infty)=0$ and so by \eqref{eq:cla1} we must have $A_\infty = B_\infty\neq 0$.\ As $\det(A_j)\det(B_j) \le 0$, we deduce $\det(A_\infty) = 0$.\ Thanks to \eqref{eq:contra}, this matrix satisfies
	\begin{equation}\label{eq:Z}
		F''(A_\infty)[Z,Z] \le 0,\quad  \text{ where } F''(A_\infty)[Z,Z] = \lim_{t \to 0}\frac{1}{t}\langle F'(A_\infty + tZ)- F'(A_\infty),Z \rangle.
	\end{equation}
	To see that this yields a contradiction, we can employ \eqref{e:DELTAAB} to write:
	\begin{equation}\label{eq:Z1}
		\Delta(A_\infty + tZ, A_\infty) \ge t^22\nu |Z|^2 + t^2(f_2(A_\infty + tZ)+f_2(A_\infty))\det Z.
	\end{equation}
	Since $A_\infty \neq 0$ is singular, again by \eqref{eq:repF}--\eqref{eq:ell} we have $\lim_{t \to 0}f_2(A_\infty + tZ) = f_2(A_\infty) = 0$.\ Moreover $|Z| = 1$, and hence dividing \eqref{eq:Z1} by $t^2$ and sending $t$ to 0,  \eqref{eq:Z1} shows that $F''(A_\infty)[Z,Z]\geq 2 \nu$, which contradicts \eqref{eq:Z}.
\end{proof}

In order to treat the general case where $T(A)\neq T(B)$, we need some control on the level sets of $T$. This is provided by the following result which, together with the implicit function theorem, implies that level sets of $T$ are 2-dimensional smooth manifolds (away from the origin):

\begin{lemma}\label{lem:differential}
	For any $B \in \R^{2\times 2}, M \in \R^{2\times 2}$, define $T'(\cdot)$ as $$T'(B)[M] = \lim_{t \to 0}\frac{T(B + t M) - T(B)}{t}.$$ Then, for all $B \neq 0,$ the linear map $M \mapsto T'(B)[M]$ has two-dimensional kernel and image.\ 
\end{lemma}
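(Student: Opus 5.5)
The plan is a dimension count combined with an explicit computation at diagonal matrices. By \eqref{eq:simpleT}, $T$ takes values in the two-dimensional space of symmetric traceless $2\times 2$ matrices, and hence so does $T'(B)[M]$. The image therefore has dimension at most $2$, and by rank–nullity on $\R^{2\times 2}\cong\R^4$ the kernel has dimension at least $2$. It thus suffices to show that $M\mapsto T'(B)[M]$ is onto the symmetric traceless matrices whenever $B\neq 0$. Note that $T$ is differentiable at $B\neq 0$, since $f$ is smooth away from $0$ and $(|B|^2,\det B)\neq 0$. Also $f_1(B)>0$: by \eqref{eq:ell} and the evenness of $f$ in its second variable, $f_1\geq\nu$ at every nonzero point of the range of $A\mapsto(|A|^2,\det A)$, because $|A|^2\geq 2|\det A|$.

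First I reduce to diagonal $B$. From \eqref{eq:simpleT} and the $\tp O(2)$-invariance of $f_1(A)=f_1(|A|^2,\det A)$ up to the sign of the determinant, which $f_1$ does not see since $f$ is even in $d$, one gets $T(QAR)=R^TT(A)R$ for all $Q,R\in\tp O(2)$. Differentiating gives $T'(QBR)[QMR]=R^T\,T'(B)[M]\,R$. Conjugation by $R$ is an isomorphism of the symmetric traceless matrices, so by the singular value decomposition \eqref{eq:SVD} we may assume $B=\diag(\lambda_1,\lambda_2)$ with $\lambda_1\geq\lambda_2\geq 0$ and $\lambda_1>0$. The derivative is
\[
T'(B)[M]=\big(2f_{11}(B)\langle B,M\rangle+f_{12}(B)\langle\cof B,M\rangle\big)\big(|B|^2\Id-2B^TB\big)+f_1(B)\big(2\langle B,M\rangle\Id-2(M^TB+B^TM)\big).
\]

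I then exhibit two directions whose images span. \emph{Off-diagonal direction:} take $M=e_1\otimes e_2$, i.e.\ $M=\begin{pmatrix}0&1\\0&0\end{pmatrix}$. Then $\langle B,M\rangle=\langle\cof B,M\rangle=0$, because $B$ and $\cof B$ are diagonal, so only the last term survives. A direct computation gives $T'(B)[M]=-2f_1(B)\lambda_1\begin{pmatrix}0&1\\1&0\end{pmatrix}$, which is nonzero. \emph{Diagonal traceless direction:} here I split into two cases. If $\lambda_1\neq\lambda_2$, then $B\notin\CO\cup\ACO$, so $T(B)\neq0$ by Lemma \ref{lemma:zeroT}. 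Since $T$ is positively $2$-homogeneous, $T'(B)[B]=2T(B)$, and $T(B)=f_1(B)\diag(\lambda_2^2-\lambda_1^2,\lambda_1^2-\lambda_2^2)$ is a nonzero multiple of $\diag(1,-1)$. If $\lambda_1=\lambda_2=\lambda>0$, then $|B|^2\Id-2B^TB=0$ and the first term vanishes identically. Taking $M=\diag(1,-1)$ then gives $T'(B)[M]=-4f_1(B)\lambda\,\diag(1,-1)\neq0$.

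In both cases the image contains the nonzero matrices $\begin{pmatrix}0&1\\1&0\end{pmatrix}$ and $\diag(1,-1)$ up to scalars, which span the symmetric traceless matrices. Hence the image is exactly two-dimensional, and the kernel is two-dimensional by rank–nullity. The only delicate points are the equivariance formula for $T'$ and the positivity of $f_1$ outside the region where \eqref{eq:ell} is stated. The latter is handled by the evenness of $f$ in $d$, together with the fact that the degenerate case $\lambda_1=\lambda_2$ is treated separately, where the $f_{11},f_{12}$ terms drop out.
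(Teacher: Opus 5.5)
Your proof is correct and follows essentially the same route as the paper. You reduce to diagonal $B$ via the singular value decomposition and the equivariance $T(QAR)=R^T T(A)R$, then obtain surjectivity onto traceless symmetric matrices from an off-diagonal direction plus $M=B$ (which gives $2T(B)$) when $T(B)\neq 0$, and from a direct computation in the case $\lambda_1=\lambda_2$, where the $f_{11},f_{12}$ terms drop out. Choosing $M=e_1\otimes e_2$ and $M=\diag(1,-1)$ instead of the paper's directions is only a cosmetic difference, and your derivative formula is consistent with \eqref{eq:simpleT}.
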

\begin{proof}
	Observe that $T$ has image in the set $S_0 \subset \Sym_2$ of traceless matrices.\ Hence, we only need to show that $T'(B)$ is a surjective map onto $S_0$.\ We compute:
	\begin{equation}\label{eq:T'}
		T'(B)[M] = \left.\frac{\d}{\d t}\right|_{t = 0}f_1(B + tM)\left(B^TB - \frac{|B|^2}{2}\Id\right) + f_1(B)(M^TB + B^TM - \langle B, M \rangle\Id)
	\end{equation}
	and
	\begin{equation}\label{eq:f_1}
		\left.\frac{\d}{\d t}\right|_{t = 0}f_1(B + tM) = 2f_{11}(B)\langle B,M\rangle + f_{12}(B)\langle \cof B,M\rangle.
	\end{equation}

Since $T(QAR)=R^T T(A) R$ for all $Q,R\in \tp O(2)$,  by exploiting the singular value decomposition of $B$ we may assume that $B = \diag(a,b)$, for $a>0,b \ge 0$.\ 
If $T(B) = 0$ then $a=b$ and
	\[
	T'(B)[M] = a f_1(B)(M^T + M - \tr(M)\Id),
	\]
	which is clearly surjective.\ If $T(B)\neq 0$, we first consider $M=e_1\otimes e_2+e_2\otimes e_1$.\ In this case the derivative in 
	\eqref{eq:f_1} vanishes and so by \eqref{eq:T'} we have
	$$T'(B)[e_1\otimes e_2 + e_2\otimes e_1] = f_1(B) (a+b) \left(e_1\otimes e_2 + e_2\otimes e_1\right)$$ 
	which is a non-zero symmetric matrix by \eqref{eq:ell}; note that it has zero diagonal elements. Next we consider the direction $M = B$ itself.\
	Since $f_1$ is $0$-homogeneous, the derivative in \eqref{eq:f_1} is again zero, and hence
		\[
	T'(B)[B] = f_1(B)(2B^TB - |B|^2\Id) = 2T(B) \in S_0.
	\]
	Since $B$ is diagonal, this matrix is as well diagonal, and non-zero since $T(B) \neq 0$. It follows that $T'(B)[e_1\otimes e_2 + e_2\otimes e_2]$ and $T'(B)[B]$ span $S_0$, which concludes the proof.
\end{proof}

\subsection{Completion of the proof of Theorem \ref{thm:keyineq}}\label{sec:complete}

\begin{proof}[Proof of Theorem \ref{thm:keyineq}]
	Using the homogeneities of the terms involved, it suffices to show the statement for matrices with $|B| \le |A| = 1$.\ We assume by contradiction that we can find matrices $A_j,B_j$ with $|B_j| \le |A_j| = 1$ fulfilling
\begin{equation}\label{eq:contra2}
	j|T(A_j)-T(B_j)|^2 +  \langle F'(A_j)-F'(B_j),A_j-B_j\rangle \le \frac{1}{j} |A_j-B_j|^2.
\end{equation}
We can assume, up to non-relabeled subsequences, that
\[
A_j \to A_\infty,\quad B_j \to B_\infty\quad \text{ and }\quad \frac{A_j- B_j}{|A_j - B_j|} \to Z.
\]
Clearly, we must have from \eqref{eq:contra2} that $T(A_\infty) = T(B_\infty)$.\ Now \eqref{eq:Delta} immediately implies that we must have $A_\infty = B_\infty.$ Dividing by $|A_j-B_j|^2$ in \eqref{eq:contra2} we  deduce that
\begin{equation}\label{eq:contra21}
\lim_{j \to \infty}\frac{T(A_j) - T(B_j)}{|A_j - B_j|} = T'(A_\infty)[Z] = 0, 
%\text{ where } T'(A_\infty)[Z] = \lim_{t \to 0}\frac{T(A_\infty + t Z) - T(A_\infty)}{t},
\end{equation}
and that
\begin{equation}\label{eq:contra22}
F''(A_\infty)[Z,Z] \le 0.
\end{equation}
By Lemma \ref{lem:differential} and the implicit function theorem, we have that
\begin{equation}\label{eq:claim2}
	\text{$E\equiv \{X \in \R^{2\times 2}: T(X) = T(A_\infty)\} \setminus \{0\}$}
\end{equation}
is a two-dimensional manifold near $A_\infty \neq 0$, and
\eqref{eq:contra21} asserts that $Z \in T_{A_\infty}E$. Consider a curve $\Gamma \colon(-\eps,\eps) \to E \subset \R^{2\times 2}$ with $\Gamma(0) = A_\infty$, $\Gamma'(0) = Z$.\ We then employ \eqref{eq:Delta} to find
\[
\Delta(\Gamma(t),\Gamma(0)) = \langle F'(\Gamma(t)) - F'(\Gamma(0)), \Gamma(t)-\Gamma(0) \rangle \ge c_F |\Gamma(t) - \Gamma(0)|^2, \quad \forall t \in (-\eps,\eps).
\]
Dividing by $t^2$ and letting $t \to 0$ yields a contradiction to \eqref{eq:contra22}.\ \end{proof}

\section{Constancy of the rank for $\sigma$-harmonic maps}\label{sec:constantrank}

The purpose of this section is to prove Theorem \ref{thm:UCP}. In fact, we will prove a more general result, namely Theorem \ref{thm:UCPsigma} below.
We begin with the following simple but crucial observation, which is due to J.\ Hirsch:

	\begin{lemma}\label{lemma:sigmasys}
	For $u$ as in Theorem \ref{thm:UCP}, we have
	\begin{equation}
	\label{eq:Bsys}
	\ddiv(\D u\, \sigma) = 0,
	\end{equation}
	for some measurable $\sigma\colon \Omega\to \Sym_2$ satisfying, for some $1\leq K<\infty$ depending on $F$,
	\begin{equation}
	\label{eq:ellsigma}
	\frac{1}{K} |\xi|^2 \leq  \sigma(x)\xi\cdot \xi \leq K |\xi|^2 \quad \text{ for all } \xi\in \R^2 \text{ and a.e.\ } x\in \Omega.
	\end{equation}
	\end{lemma}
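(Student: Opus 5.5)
The plan is to write $F'(A)=A\,\sigma(A)$ pointwise, for an explicit symmetric matrix field $\sigma(A)$ that is uniformly elliptic, and then to set $\sigma(x)\equiv\sigma(\D u(x))$. Once this holds, \eqref{eq:EL} reads $\ddiv(\D u\,\sigma)=0$ directly. I start from \eqref{eq:derF}, $F'(A)=2f_1(A)A+f_2(A)\cof A$. When $\det A\neq 0$, \eqref{eq:A} gives $\cof^T A=\det A\,A^{-1}$, that is, $\cof A=\det A\,A^{-T}$. Hence $\cof A = A\,\det A\,(A^TA)^{-1}$, and I take
\[
\sigma(A)\equiv 2f_1(A)\,\Id + f_2(A)\det A\,(A^TA)^{-1}\qquad\text{if }\det A\neq 0 .
\]
This matrix is symmetric, and it is Borel in $A$, so $\sigma(\D u(x))$ is measurable. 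For singular $A\neq0$ the evenness of $f$ gives $f_2(A)=0$, so $F'(A)=2f_1(A)A$ and I take $\sigma(A)=2f_1(A)\Id$. For $A=0$ I take $\sigma=\Id$, which is consistent with $F'(0)=0$.

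Next I bound the eigenvalues. Let the singular values of $A$ be $s_1\ge s_2>0$. In the singular basis, the eigenvalues of $\sigma(A)$ are $2f_1(A)+f_2(A)\det A/s_i^2$. By \eqref{eq:repF} I write $f(n,d)=n\,g(d/n)$, where $g$ is convex and even on $[-\tfrac12,\tfrac12]$ and smooth there, since $f$ is smooth away from $0$. Then $f_1=g(t)-tg'(t)$ and $f_2=g'(t)$, with $t=\det A/|A|^2$. Because $g'$ is odd and nondecreasing, $f_2\det A=g'(t)\,t\,|A|^2\ge0$, so every eigenvalue is at least $2f_1(A)$. The lower bound $f_1\ge\nu$ is assumed in \eqref{eq:ell} on $\{n\ge 2d\ge0\}$, and it extends to all admissible $(n,d)$ by evenness in $d$. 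This gives the lower bound $2\nu$.

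For the upper bound, note first that $f_1$ is $0$-homogeneous and continuous on a compact set of directions, hence bounded. The dangerous eigenvalue is $|f_2|\,s_1s_2/s_2^2=|g'(t)|\,s_1/s_2$, which could blow up when $s_2\ll s_1$. This is the step I expect to be the real obstacle, and the plan is to exploit $g'(0)=0$ together with the smoothness of $g$. These give $|g'(t)|\le C|t|$, and therefore
\[
|g'(t)|\,\frac{s_1}{s_2}\le C\,\frac{s_1s_2}{s_1^2+s_2^2}\cdot\frac{s_1}{s_2}\le C .
\]
The other eigenvalue carries the factor $s_2/s_1\le1$ and is bounded trivially. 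This yields \eqref{eq:ellsigma} with $K$ depending only on $\nu$, $\sup f_1$ and $\sup|g''|$.

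Finally, the weak formulation of \eqref{eq:EL} is $\int\langle F'(\D u),\D\varphi\rangle=0$, and this is literally $\int\langle \D u\,\sigma,\D\varphi\rangle=0$, which gives \eqref{eq:Bsys}.
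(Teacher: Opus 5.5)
Your proof is correct and is essentially the paper's argument. It uses the same factorization $F'(A)=A\,\sigma(A)$ with $\sigma(A)=2f_1(A)\Id+f_2(A)\det A\,(A^TA)^{-1}=2f_1(A)\Id+\frac{f_2(A)}{\det A}\cof(A^TA)$ for $\det A\neq 0$, and the same sign argument ($f_2$ odd and nondecreasing in $d$) for the lower bound $2\nu$. The only differences are minor. The paper extends $\sigma$ continuously to singular $A\neq 0$ via $f_{22}(1,0)\cof(A^TA)/|A|^2$, and obtains the upper bound from $0$-homogeneity plus continuity on the unit sphere; your estimate $|g'(t)|\le C|t|$ is exactly the quantitative content of that continuity. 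Your choice $2f_1\Id$ on singular matrices is equally valid for the lemma as stated. The paper's continuous version is what it later cites to get $\sigma\in\Lip_{\loc}(\Omega\setminus\Sigma)$ in the proof of Theorem \ref{thm:C1}, but there $\det \D u>0$ a.e., so the two choices agree a.e.
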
	
	
	\begin{proof}
	Indeed, by \eqref{eq:derF} we may write
	$$F'(A) = A \left[2 f_1(A) \Id+ f_2(A) A^{-1} \cof(A) \right],$$
	whenever $\det A \neq 0$. We can also rewrite
	$$f_2(|A|^2, \det A) A^{-1} \cof(A)= \frac{f_2(|A|^2, \det A)}{\det A} \cof(A^T A)\geq 0,$$
	where the last inequality is due to \eqref{eq:ell} and the fact that, since $f$ is even in the last variable, $f_2$ is odd in the last variable. Thus, when $\det A\neq 0$ we may define
	$$B(A)  \equiv 2 f_1(A) \Id  + \frac{f_2(|A|^2, \det A)}{\det A} \cof(A^T A) \geq 2 \nu \Id ,$$
		using \eqref{eq:ell}.\ This definition extends continuously to singular matrices $A \neq 0$: since $f_2(n,\cdot)$ is odd, $f_2(A) = 0$ for such matrices, and hence we can set
		\[
		B(A)  \equiv 2 f_1(A) \Id  + f_{22}(|A|^2,0) \cof(A^T A) = 2 f_1(A) \Id  + f_{22}(1,0) \frac{\cof(A^T A)}{|A|^2} \ge 2\nu\Id,
		\]
		where we have used the $-1$-homogeneity of $f_{22}$. For $A=0$, we can define $B(0)=\Id$.
Finally, we let $\sigma(x) \equiv B(\D u(x))$.\ The lower bound in \eqref{eq:ellsigma} is then clear and the upper bound follows from the fact that $B$ is 0-homogeneous and locally Lipschitz  away from zero, therefore bounded.
	\end{proof}
	
	\begin{remark}\label{rem:genint}
The above calculations did not use the homogeneity of the integrand.
In fact, the conclusion is a general consequence of frame indifference and suitable convexity assumptions.
\end{remark}
	
		Lemma \ref{lemma:sigmasys} shows that the components $u^1, u^2$ of a stationary point $u=(u^1, u^2)$ are weak solution of the \textit{same} linear elliptic equation.
	Thus, following the terminology in \cite{Alessandrini2001}, we see that any stationary point is a $\sigma$-harmonic map.  We will  deduce Theorem \ref{thm:UCP} from the following:
	
	\begin{theorem}\label{thm:UCPsigma}
	Let $u\in W^{1,2}_\loc(\Omega,\R^2)$ be a weak solution of \eqref{eq:Bsys}, where $\sigma$ satisfies \eqref{eq:ellsigma}. If $\Omega$ is connected then there is $k\in \{0,1,2\}$ such that $\textup{rank}(\D u)=k$ a.e.\ in $\Omega$.
	\end{theorem}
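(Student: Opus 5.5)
The plan is to study the scalar equation $\ddiv(\sigma\nabla w)=0$ solved by every linear combination $w=\lambda_1u^1+\lambda_2u^2$, and to use planar quasiregular theory for such solutions. Reduce first to a simply connected subdomain, e.g.\ a disc $B\Subset\Omega$, and conclude at the end by connectedness. For a solution $w\in W^{1,2}(B)$ of $\ddiv(\sigma\nabla w)=0$, the equation gives a stream function $\tilde w$ with $\nabla\tilde w=J\sigma\nabla w$, where $J$ is the rotation by $\pi/2$. Then $g=w+i\tilde w$ is $K'$-quasiregular, with $K'$ depending only on $K$ in \eqref{eq:ellsigma}. By Stoilow factorization $g=h\circ\chi$, with $h$ holomorphic and $\chi$ quasiconformal. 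Hence, unless $w$ is constant, $\nabla w$ vanishes only on a set which is the $\chi$-preimage of a discrete set, so discrete; in particular $\nabla w\neq0$ a.e. This is the classical unique continuation for $\sigma$-harmonic functions (Alessandrini--Magnanini, Bojarski).

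Step one separates $k=0$ from $k\geq1$. If $\D u=0$ on a set of positive measure, then $\nabla u^1=0$ on that set. The previous paragraph forces $u^1$ to be constant, and likewise $u^2$, so $k=0$ on $B$.

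Step two separates rank one from rank two. Suppose $u$ is nonconstant and $\det\D u=0$ on a set $E$ of positive measure. For a.e.\ $x\in E$ there is a direction $\lambda(x)\in S^1$ with $\lambda(x)^T\D u(x)=0$. Covering $S^1$ by countably many small arcs, one arc carries a positive-measure portion of $E$. I then need to pass from ``for each $x$ some $\lambda(x)$ kills $\nabla(\lambda\cdot u)$'' to ``one fixed $\lambda$ kills $\nabla(\lambda\cdot u)$ on a set of positive measure'', after which the first paragraph gives $\lambda\cdot u$ constant and so $\det\D u\equiv0$.

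This is the main obstacle: the pigeonhole argument alone does not produce a fixed $\lambda$. The route I would try first uses that the Beltrami coefficients are the same for all $\lambda$, because $\sigma$ is common to both components. Write $G=g^1+ig^2$ in vector form, where $g^j=u^j+i\tilde u^j$ is the complex potential of $u^j$. The key algebraic fact to verify is that $\det\D u$ and $\operatorname{Im}(\partial_z g^1\,\overline{\partial_z g^2})$ (or $\det$ of the Jacobian of $(g^1,g^2)$ in suitable coordinates) are comparable in sign and size, up to a factor bounded via $K$. The reason is that $\nabla\tilde u^j=J\sigma\nabla u^j$ with the same $\sigma$. With that in hand, the problem reduces to showing that the relevant $J$-type quantity of two solutions of the same Beltrami equation $f_{\bar z}=\mu f_z+\nu\overline{f_z}$ vanishes on a set of positive measure only if it vanishes identically. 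For this, take $\chi$ to be the quasiconformal principal solution and compose with $\chi^{-1}$. A standard reduction for equations with a common $\mu$ (the $\nu$ part vanishes for the symmetric real $\sigma$ case after the usual reduction) makes $g^1\circ\chi^{-1}$ and $g^2\circ\chi^{-1}$ both holomorphic, say $h^1,h^2$. The degeneracy condition then becomes, for a.e.\ point of a positive-measure set, that $h^1{}'$ and $h^2{}'$ are real-linearly dependent in a way encoded by the vanishing of the analytic-type expression $\operatorname{Im}(h^1{}'\overline{h^2{}'})$. This expression is the imaginary part of $h^1{}'\overline{h^2{}'}$, which is real-analytic.

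A real-analytic function vanishing on a set of positive measure vanishes identically, and quasiconformal maps preserve null sets. Hence $\det\D u=0$ a.e.\ on $B$, giving $k=1$ there; otherwise $\det\D u\neq0$ a.e., giving $k=2$. Finally, $B$ was arbitrary and $\Omega$ is connected. The sets where the rank is a.e.\ equal to a given value are open, and overlapping discs must agree. So a single $k$ works on all of $\Omega$.
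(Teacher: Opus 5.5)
You reduce the problem the same way the paper does, and Step one and the final connectedness argument are fine. Your ``key algebraic fact'' is also correct in substance; compare Lemma \ref{lemma:im}, which gives $\Im(\p_w\psi)\circ\varphi_1=-\frac{\det\sigma+1}{2\det\D\varphi_1}\det\D u$ for $\psi=\varphi_2\circ\varphi_1^{-1}$.

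\textbf{The gap.} It sits exactly at the step you flagged. It is false that a single quasiconformal change of variables $\chi$ makes both $g^1\circ\chi^{-1}$ and $g^2\circ\chi^{-1}$ holomorphic.

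In \eqref{eq:R-lin} the coefficient $\nu=\frac{1-\det\sigma}{\det(\Id+\sigma)}$ is real, but it vanishes only where $\det\sigma=1$. No change of variables can remove it. Under $\chi$ the coefficient becomes $\sigma^*=\frac{\D\chi\,\sigma\,\D\chi^T}{\det\D\chi}\circ\chi^{-1}$, so $\det\sigma^*=\det\sigma\circ\chi^{-1}$. The usual reduction to isothermal coordinates kills $\mu$, not $\nu$, and leaves a conductivity equation $f_{\bar z}=\nu\overline{f_z}$.

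Equivalently, Stoilow gives $g^j=h^j\circ\chi^j$, where $\chi^j$ solves the $\C$-linear equation with coefficient $\mu+\nu\,\overline{g^j_z}/g^j_z$. That coefficient depends on $j$, so there is no common chart. Already for $\sigma=s\Id$ with $s\neq 1$ you get $g^j=\frac{1+s}{2}h_j+\frac{1-s}{2}\overline{h_j}$ with $h_j$ holomorphic. These two maps have the different Beltrami coefficients $\frac{1-s}{1+s}\,\overline{h_j'}/h_j'$. In this constant case the conclusion still holds because $h_j$ is analytic, but for variable measurable $\sigma$ there is no real-analytic quantity to work with. As written, your argument only covers $\det\sigma\equiv 1$, the area-integrand case in the paper's remark, where $\psi$ is holomorphic.

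\textbf{What is missing.} You need a genuine unique continuation theorem for the $\R$-linear structure; real-analyticity cannot substitute for it. The paper proceeds as follows:
\begin{enumerate}
\item It works in a ball where $\varphi_1$ is invertible; if $\varphi_1$ is nonconstant, this holds away from a discrete set by Stoilow.
\item It observes that $\psi=\varphi_2\circ\varphi_1^{-1}$ solves the reduced Beltrami equation $\p_{\bar w}\psi=\lambda\Im(\p_w\psi)$, with $\lambda=\frac{-2i\nu}{1+|\nu|^2-|\mu|^2}$.
\item It applies Theorem \ref{thm:red} (J\"a\"askel\"ainen; De Philippis--Guerra--Tione): $\Im(\p_w\psi)$ vanishes either a.e.\ or almost nowhere.
\item Through the identity of Lemma \ref{lemma:im}, and since $\varphi_1$ and its local inverse preserve null sets, $\det\D u$ is either $0$ a.e.\ or nonzero a.e.\ in the ball.
\end{enumerate}
If you replace your holomorphic-reduction step with this, the rest of your proof goes through unchanged.
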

	
	Assuming without loss of generality that $\Omega$ is simply connected, \eqref{eq:Bsys} is then equivalent to \begin{equation}
\label{eq:vj}
J\sigma \D u^j =  \D v^j, \qquad v_j\in W^{1,2}(\Omega), \qquad j=1,2,
\end{equation}	
where $J$ is the matrix corresponding to a rotation by angle $\tfrac \pi 2$.
	Let $\varphi_j \equiv u^j  + i v^j\colon \Omega\to \C$, $j=1,2$. It is well-known that the maps $\varphi_j$ are $K$-quasiregular; in fact, they solve the following $\R$-linear Beltrami equation:
	\begin{equation}
	\label{eq:R-lin}
	\p_{\bar z} \varphi_j = \mu \partial_z \varphi_j + \nu \overline{\partial_z \varphi_j}, \qquad j=1,2.
	\end{equation}
	Here we note that, crucially, the coefficients $\mu,\nu$ are independent of $j$, and they are determined explicitly in terms of the entries $(\sigma_{ab})$ of $\sigma$ through
	$$\mu = \frac{\sigma_{22}- \sigma_{11}-2 i \sigma_{12}}{\det(\Id + \sigma)}, \qquad \nu = \frac{1-\det \sigma}{\det(\Id + \sigma)},$$
	 and the ellipticity bounds \eqref{eq:ellsigma} on $\sigma$ are equivalent to the bounds
	$$|\mu| + |\nu|\leq \frac{K-1}{K+1} \quad \tp{ a.e.\ in } \Omega,$$
	see \cite[Theorem 16.1.6]{Astala2009}.
It also follows from the theory of quasiregular maps that 
$\varphi_j$ is either constant or locally invertible away from a discrete set of points in $\Omega$. In two dimensions, this can be seen as a consequence of Stoilow's factorization \cite[Corollary 5.5.4]{Astala2009}.

	Let us now suppose that we are in a small ball $B_r(z_0)\subset \Omega$ where $\varphi_1$ is invertible. Since both maps $\varphi_1, \varphi_2$ solve the same $\R$-linear Beltrami equation \eqref{eq:R-lin}, a direct calculation using the chain rule shows that the map $\psi\equiv \varphi_2 \circ \varphi_1^{-1}$ solves
	$$\p_{\bar w} \psi = \lambda(w) \Im(\p_w \psi), \qquad \lambda(w) \equiv  \frac{-2 i\nu(z)}{1+|\nu(z)|^2 -|\mu(z)|^2}, $$
	where $w=\varphi_1(z)$, cf.\ \cite[Theorem 6.1.1]{Astala2009}. 
	
\begin{remark}
Note that whenever $\det \sigma=1$ we have $\nu=\lambda=0$, and so $\psi$ is holomorphic. This is the case when $\sigma$ is induced from the area integrand by the construction in Lemma \ref{lemma:sigmasys}. We refer the reader to \cite{Hirsch2023} for a different way of constructing a holomorphic function starting from the outer variations of the area integrand.
\end{remark}
	
We will invoke the following unique continuation result for reduced Beltrami equations \cite{DePhilippis2023,Jaaskelainen2013}:
		
	\begin{theorem}\label{thm:red}
	Let $\psi \in W^{1,2}(\Omega',\C)$ be a solution of the reduced Beltrami equation
	$$\p_{\bar w} \psi = \lambda \Im(\p_w \psi), \qquad \|\lambda\|_{L^\infty(\Omega')}<1.$$
	Then  either $\Im(\p_w \psi)=0$ a.e.\ in $\Omega'$ or $\Im(\p_w \psi)\neq 0$ a.e.\ in $\Omega'$.
	\end{theorem}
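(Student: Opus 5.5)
The plan is to exploit the fact that the reduced Beltrami equation is $\R$-linear and admits a one-parameter family of explicit solutions. Then the set $E\equiv\{\Im(\p_w\psi)=0\}$ becomes a set where some member of a family of quasiregular maps is degenerate, and we use that the Jacobian of a non-constant quasiregular map is positive a.e. I expect the argument to close only after the blow-up step described below, which I have not fully checked.

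\emph{Step 1: a family of quasiregular solutions.} Since $\Im(\p_w\psi)=\frac1{2i}(\p_w\psi-\overline{\p_w\psi})$, the equation gives $|\p_{\bar w}\psi|\le \|\lambda\|_{L^\infty}|\p_w\psi|$, so $\psi$ is $K$-quasiregular with $K$ depending only on $\|\lambda\|_{L^\infty}$. The identity $w\mapsto w$ solves the same equation, since $\p_w w=1$ is real and $\p_{\bar w}w=0$. By $\R$-linearity, for every $t\in\R$ the map $\psi_t\equiv\psi-tw$ solves the same reduced equation, hence is $K$-quasiregular. Its derivatives are $\p_w\psi_t=\p_w\psi-t$ and $\p_{\bar w}\psi_t=\p_{\bar w}\psi$, and $\Im(\p_w\psi_t)=\Im(\p_w\psi)$.

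\emph{Step 2: structure of $E$.} On $E$ the equation forces $\p_{\bar w}\psi=0$ and $\p_w\psi=:a(w)\in\R$, so $\D\psi=a\,\Id$ a.e.\ on $E$. Consequently $J_{\psi_t}=|a-t|^2$ on $E$. Let $E_t\equiv\{w\in E: a(w)=t\}$; this is exactly the part of $E$ where $\D\psi_t=0$. For each fixed $t$, the map $\psi_t$ is quasiregular, so either it is constant or $J_{\psi_t}>0$ a.e. (this follows from Stoilow factorization, as in the preceding discussion). If $\psi_t$ is constant for some $t$, then $\psi=tw+c$ and $\Im\p_w\psi\equiv 0$, giving the first alternative. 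Otherwise $|E_t|=0$ for every $t\in\R$.

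\emph{Step 3: from the individual sets $E_t$ to $E$ (main obstacle).} The difficulty is that $E=\bigcup_t E_t$ is an uncountable union, and $a$ is only $L^2$. My first attempt is a blow-up at a point $w_0$ of density of $E$ which is also a Lebesgue point of $\D\psi$, where $\D\psi(w_0)=t_0\Id$. The rescalings of $\psi_{t_0}$ at $w_0$, normalized by their $W^{1,2}$ energy on $B_r(w_0)$, are $K$-quasiregular solutions of reduced equations with rescaled coefficients. By compactness of normalized quasiregular maps, they converge to a non-constant quasiregular limit. I would then aim to show that, on the density portion of $E$, the derivative of this limit must be a real multiple of the identity which oscillates less and less. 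That should force the limit to be affine with real derivative, and feeding this back should contradict either $|E_{t}|=0$ or non-constancy.

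If this fails, I would instead use the higher integrability $\D\psi\in L^{p}_\loc$ for some $p>2$ together with the Bojarski--Iwaniec theory of the Beurling transform. The idea is to write $\p_w\psi-t$ as the image of $\p_{\bar w}\psi$ under an invertible operator $(I-\lambda_\sharp\mathcal S)^{-1}$. One would then seek a quantitative lower bound on $|\{|a-t|<\delta\}|$ that is uniform in $t$, and derive a contradiction by integrating in $t$ (a Fubini/coarea-type argument in the parameter). Once $|E|>0$ is shown to force constancy of some $\psi_t$, the dichotomy of Theorem \ref{thm:red} follows, with $\Omega'$ connected as is implicit.
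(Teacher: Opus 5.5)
\textbf{There is a genuine gap: Step 3 is the theorem, and neither route you sketch closes it.}

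The paper does not prove Theorem \ref{thm:red}; it quotes it from \cite{Jaaskelainen2013,DePhilippis2023}. Your Steps 1 and 2 are correct and are the natural starting point: $\psi_t=\psi-tw$, $t\in\R$, is a linear family of $K$-quasiregular maps. But all they give is that each $E_t$ is null. The passage from this to $|E|=0$, which you leave open, is exactly the nontrivial part of the theorem.

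\emph{The blow-up.} At a density point $w_0$ of $E$ the limit is indeed real-linear. Uniform higher integrability of the normalized $K$-quasiregular rescalings, together with $|B_1\setminus r^{-1}(E-w_0)|\to 0$, forces $\D v\in\R\Id$ a.e. But a non-constant limit $v(x)=sx$, $s\in\R$, contradicts nothing; it is exactly what one expects wherever $E$ has density one. A blow-up at density points of $E$ never sees $E^c$, while the dichotomy is precisely about whether $E$ and $E^c$ can both have positive measure.

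\emph{The Fubini route.} The lower bound you want is automatic on average, since $\int_\R|\{w\in E\cap B:|a(w)-t|<\delta\}|\d t=2\delta|E\cap B|$. What you would actually need is an upper bound $|\{w\in B:|\D\psi_t(w)|<\delta\}|=o(\delta)$, uniformly for $t$ in a bounded interval. Quasiregularity of the individual maps gives nothing of the sort. For the radial stretching $w|w|^{K-1}$ this set has area comparable to $\delta^{2/(K-1)}$, which is not $o(\delta)$ once $K\geq 3$. In any case the constants in such estimates depend on the map, hence on $t$.

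\emph{Why Step 2's information cannot suffice.} A borderline example shows this. Let $C_1\subset(-1,1)$ be compact with positive measure and empty interior. Let $\alpha'(x)=\mathrm{dist}(x,C_1)$, $A'=\alpha$, and set $\psi(x,y)=A(x)+i\alpha(x)y$ on $(-1,1)^2$. Then
\[
\p_w\psi=\alpha+\tfrac i2\alpha'y,\qquad \p_{\bar w}\psi=\tfrac i2\alpha'y=i\,\Im(\p_w\psi),
\]
so $\psi$ solves the reduced equation with $\lambda\equiv i$. Every $\psi-tw$ is non-constant with Jacobian $(\alpha(x)-t)^2>0$ a.e. Every $E_t$ is null, because $\alpha$ is strictly increasing. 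So everything Steps 1--2 extract about the individual maps holds. Nevertheless $E=\{\alpha'(x)y=0\}\supset C_1\times(-1,1)$ and its complement both have positive measure.

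Hence any completion of Step 3 must use the hypothesis $\|\lambda\|_{L^\infty}<1$ again, in a way that couples different values of $t$. Using it only through $J_{\psi_t}>0$ a.e.\ for each fixed $t$ is not enough, and this coupling argument is the missing idea in your proposal.
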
			

	In order to apply Theorem \ref{thm:red}, we compute:
		
	\begin{lemma}\label{lemma:im}
	We have
	$$\D \psi \circ \varphi_1= \frac{1}{\det \D \varphi_1}\begin{bmatrix}
	\sigma \D u^2 \cdot \D u^1 & \det \D u\\
	-\det (\sigma \D u) & \sigma \D u^2 \cdot \D u^1
	\end{bmatrix}
	$$
	and thus $\Im(\p_w \psi){\circ \varphi_1}=-\frac{\left(\det \sigma+1\right)}{2 \det \D \varphi_1} \det \D u$. 
	\end{lemma}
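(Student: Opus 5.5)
The plan is to compute $\D\psi\circ\varphi_1$ directly from the chain rule, $\D\psi\circ\varphi_1 = \D\varphi_2\,(\D\varphi_1)^{-1}$. This is legitimate on the ball $B_r(z_0)$ where $\varphi_1$ is invertible, and a.e.\ since $\varphi_1$ is quasiregular and nonconstant, so $\det \D\varphi_1>0$ a.e. Then I would read off the conformal part from the formulas in \eqref{wirt}.

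\textbf{Step 1: the rows of $\D\varphi_j$.} Write $a\equiv \D u^1$ and $b\equiv\D u^2$ as column vectors, and let $J(x,y)=(-y,x)$. By \eqref{eq:vj}, $\D v^j = J\sigma \D u^j$. Hence $\D\varphi_j$ is the $2\times 2$ matrix with first row $(\D u^j)^T$ and second row $(J\sigma\D u^j)^T$.

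\textbf{Step 2: $\det\D\varphi_1$ and $(\D\varphi_1)^{-1}$.} For a matrix with rows $p,q$ the determinant is $p\cdot(-Jq)$. With $p=a$, $q=J\sigma a$ and $-J^2=\Id$, this gives
\[
\det\D\varphi_1=a\cdot\sigma a=\sigma \D u^1\cdot \D u^1>0
\]
by \eqref{eq:ellsigma}, whenever $\D u^1\neq0$. The inverse has columns $c_1,c_2$ fixed by $p\cdot c_1=1$, $q\cdot c_1=0$, $p\cdot c_2=0$, $q\cdot c_2=1$. Using $J^T=-J$, one checks that
\[
c_1=\frac{\sigma a}{\det \D\varphi_1},\qquad c_2=\frac{Ja}{\det\D\varphi_1}.
\]

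\textbf{Step 3: the four entries of the product.} Multiplying the rows $b^T$ and $(J\sigma b)^T$ of $\D\varphi_2$ against $c_1,c_2$ gives, up to the factor $1/\det\D\varphi_1$:
\begin{itemize}
\item $b\cdot\sigma a=\sigma\D u^2\cdot\D u^1$ (using the symmetry of $\sigma$);
\item $b\cdot Ja=a_1b_2-a_2b_1=\det\D u$;
\item $J\sigma b\cdot\sigma a=-\det[\sigma a;\sigma b]=-\det(\sigma\D u)$, interpreting $\det(\sigma\D u)$ as $\det(\D u\,\sigma)=\det\sigma\det\D u$;
\item $J\sigma b\cdot Ja=\sigma b\cdot a$, again equal to $\sigma\D u^2\cdot\D u^1$.
\end{itemize}
This yields the claimed matrix.

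\textbf{Step 4: the imaginary part.} By \eqref{wirt}, $\Im(\p_w\psi)=\tfrac12(M_{21}-M_{12})$, where $M=\D\psi\circ\varphi_1$. With the entries above this equals
\[
-\frac{(\det\sigma+1)\det\D u}{2\det\D\varphi_1}.
\]

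Nothing here is deep. The only points requiring care are sign conventions: the orientation of $J$, the row/column convention for $\D\varphi_j$, and the meaning of $\det(\sigma\D u)$. The main obstacle is therefore bookkeeping, namely keeping these conventions consistent. I would double-check them on a trivial case, $\sigma=\Id$ and $u^1=x_1$, where $\varphi_1=z$ and $\psi=\varphi_2$ must be holomorphic, so the diagonal entries must agree and the off-diagonal entries must be opposite.
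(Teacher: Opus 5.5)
Your proposal is correct and follows essentially the same route as the paper: the chain rule $\D\psi\circ\varphi_1=\D\varphi_2(\D\varphi_1)^{-1}$, the explicit inverse with columns $\sigma\D u^1/\det\D\varphi_1$ and $J\D u^1/\det\D\varphi_1$, the symmetry of $\sigma$, and then reading off $\Im(\p_w\psi)=\tfrac12(M_{21}-M_{12})$ from \eqref{wirt}. Your inverse is exactly the paper's formula $A^{-1}=\frac{1}{\det A}[-JA^2\ \ JA^1]$ with $A^2=J\sigma\D u^1$, and your sign bookkeeping, including $\det(\sigma\D u)=\det\sigma\det\D u$, checks out.
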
		
		
	\begin{proof}
	This is a simple linear algebra calculation. If $A^1, A^2$ are the rows of $A\in \R^{2\times 2}$, we have
	$$
	A=\begin{bmatrix}
	A^1 \\ A^2
	\end{bmatrix},\qquad 
	\det A = -A^1\cdot JA^2,
	\qquad A^{-1} = \frac{1}{\det A} \begin{bmatrix}
	-J A^2 & J A^1
	\end{bmatrix}.
	$$
	Thus, using \eqref{eq:vj}, we have
	$$\D \psi \circ \varphi_1 = \D \varphi_2 (\D \varphi_1)^{-1}= 
	\begin{bmatrix}
	\D u^2 \\ J\sigma \D u^2
	\end{bmatrix}
		\begin{bmatrix}
	\D u^1 \\ J\sigma \D u^1
	\end{bmatrix}^{-1}
	= \frac{1}{\det \D \varphi_1}
		\begin{bmatrix}
	\D u^2 \\ J\sigma \D u^2
	\end{bmatrix}
	\begin{bmatrix}
	\sigma \D u^1 & J \D u^1
	\end{bmatrix}
	$$
	from which the claim follows by symmetry of $\sigma$. The final claim is immediate from \eqref{wirt}.
	\end{proof}

		\begin{proof}[Proof of Theorem \ref{thm:UCPsigma}]
		If both $\varphi_1$ and $\varphi_2$ are constant then $u$ is constant and there is nothing to prove. Thus, up to swapping the roles of $\varphi_1$ and $\varphi_2$, we may assume that $\varphi_1$ is non-constant. By the previous discussion it then suffices to prove the conclusion in any ball $B_r(z_0)\subset \Omega$ where $\varphi_1$ is invertible. In such a ball we necessarily have $\det \D \varphi_1>0$ a.e.\ by \cite[Corollary 3.7.6]{Astala2009}. By \eqref{eq:ellsigma} we have $\det \sigma\geq K^{-2} $ a.e.\ and so it  follows from Theorem \ref{thm:red} and Lemma \ref{lemma:im} that either  $\det \D u\neq 0$ a.e.\ in $B_r(z_0)$ or $\det \D u=0$ a.e.\ in $B_r(z_0)$. Thus either $\rank(\D u)=2$ a.e.\ in $B_r(z_0)$ or $\rank(\D u)\leq 1$ a.e.\ in $B_r(z_0)$. Thus we just have to show that, in the latter case, either $u$ is constant or $\rank(\D u)=1$ a.e.\ in $B_r(z_0)$. Since we assume that $\varphi_1$ is non-constant from the start, we note that, by \eqref{eq:ellsigma}, we have
$$0 < \det \D \varphi_1 = \sigma \D u^1\cdot \D u^1\leq \Lambda |\D u^1|^2$$
a.e.\ and so $\rank(\D u)=1$ a.e. in $B_r(z_0)$, as wished.
	\end{proof}

\begin{proof}[Proof of Theorem \ref{thm:UCP}]
By Lemma \ref{lemma:sigmasys}, we can apply Theorem \ref{thm:UCPsigma} to infer the essential constancy of the rank of $\D u$. For the last claim we thus see that, if $\det \D u\geq 0$ a.e.\ then either $\det \D u>0$ a.e.\ or $\det \D u=0$ a.e.\ in $\Omega$.\ In the former case, then $u$ has finite distortion by definition. In the latter case, as noticed for instance in Lemma \ref{lemma:sigmasys}, we have $f_2(\D u)=0$ a.e.\ in $\Omega$. Therefore if $\det \D u=0$ a.e.\ then, by \eqref{eq:derF}, we have 
$$F'(\D u)= 2 f_1(|\D u|^2, 0) \D u=2 f_1(2,0) \D u,$$
since $f_1$ is 0-homogeneous. As $f_1(2,0)$ is a positive constant we see that $u$ is harmonic.
\end{proof}

\section{Theorem \ref{thm:C1}: Orientation-preserving stationary points}\label{sec:OP}

The purpose of this section is to prove Theorem \ref{thm:C1}.\ As discussed in the introduction, instead of working with $F$ and $f$ as in \eqref{eq:repF}, it is more convenient to work with the representation \eqref{eq:defPsi}.
We assume that $u$ is not harmonic and $\det \D u\geq 0$ a.e.\ so that, by Theorem \ref{thm:UCP}, $u$ is a mapping of finite distortion. Therefore
the quantity $\Psi(\mb K_u)$ is well-defined a.e.\ in $\Omega$.\ Thanks to \eqref{eq:alg}, in complex notation the distortion of $u$ can be expressed in terms of the Beltrami coefficient $\mu_u$:
 \begin{equation}
 \label{eq:Kcomplex}
 \text{if } \mu_u \equiv \frac{\p_{\bar z} u}{\p_z u}, \quad \text{ then } \mb K_u = \frac{1+|\mu_u|^2}{1-|\mu_u|^2} \text{ and } |\mu_u| = \sqrt{\frac{{ \mb K_u- 1}}{{ \mb K_u + 1}}}.
 \end{equation}
 Note that $\mu_u$ is well-defined, since $\det \D u = |u_z|^2 -|u_{\bar z}|^2>0$ a.e.\ in $\Omega$.

 Through representation \eqref{eq:defPsi}, \eqref{eq:repF}--\eqref{eq:ell} are  equivalent to asserting that
\begin{equation}
	\label{eq:ellPsi}
	\Psi \text{ is smooth, convex, and } \Psi'([1,\infty]) = [\lambda,\Lambda]
\end{equation}
for some suitable  constants $0<\lambda<\Lambda<\infty$. Indeed, we can take $\lambda=2\nu$, and the upper bound follows from the identity $\Psi'(t)=2f_1(2t,1)= 2f_1(2,1/t)$ and the smoothness of $f$ away from the origin. In particular, we see that $\Psi$ has \textit{linear growth}.
\medskip

The proof of Theorem \ref{thm:C1} proceeds through the following steps:
\begin{itemize}
	\item we start by studying blowups of orientation-preserving solutions to \eqref{eq:EL}-\eqref{eq:inner} in Section \ref{sec:compa}.\ We will show that they must be conformal, linear maps;
	\item in Section \ref{sec:nbs} we rewrite the inner variation equation as a non-autonomous Beltrami system;
	\item next, in Section \ref{sec:pr} we will recall some results of \cite{Iwaniec2013} and adapt them to our context;
	\item finally, we will show Theorem \ref{thm:C1} in Section \ref{sec:proofC1}.
\end{itemize}

\subsection{Compactness and characterization of tangent maps}\label{sec:compa}

We begin by stating the following result, which was already mentioned in Remark \ref{rem:e-reg}.

\begin{proposition}\label{cor:strong}
	Let $(u_j)\subset W^{1,2}(\Omega,\R^2)$ be a sequence of weak solutions of \eqref{eq:EL}--\eqref{eq:inner} and suppose that $F$ satisfies \eqref{eq:repF}--\eqref{eq:ell}. If $u_j\w u$ weakly in $W^{1,2}_\loc(\Omega,\R^2)$ then this convergence is strong and $u$ is also a weak solution of \eqref{eq:EL}--\eqref{eq:inner}.
\end{proposition}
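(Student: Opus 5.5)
The plan is to use the uniform fractional estimate \eqref{eq:compact} from Theorem \ref{thm:precise}\ref{it:W14} to upgrade weak convergence to strong convergence, and then pass to the limit in both equations.

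\emph{Step 1: strong convergence.} Since $u_j\w u$ weakly in $W^{1,2}_\loc$, the norms $\|u_j\|_{W^{1,2}(\Omega'')}$ are uniformly bounded for every $\Omega''\Subset\Omega$. Fix $\Omega'\Subset\Omega''\Subset\Omega$, with $\Omega''$ a ball or a finite union of balls. Apply \eqref{eq:compact} on $\Omega''$ in place of $\Omega$; the constant there depends only on $F$. This gives
$$\|\D u_j(\cdot+h)-\D u_j\|_{L^2(\Omega')}\le C\,|h|^{1/2}\sup_j\|u_j\|_{W^{1,2}(\Omega'')}$$
uniformly in $j$, for all $|h|<\operatorname{dist}(\Omega',\partial\Omega'')$. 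The sequence $(\D u_j)$ is bounded in $L^2(\Omega')$ and its translations are equicontinuous, so the Fréchet--Kolmogorov (Riesz) compactness criterion makes it precompact in $L^2(\Omega')$. Every strong subsequential limit must coincide with the weak limit $\D u$. Hence $\D u_j\to\D u$ in $L^2_\loc$, and by Rellich $u_j\to u$ in $L^2_\loc$, so the whole sequence converges strongly in $W^{1,2}_\loc$.

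\emph{Step 2: passing to the limit.} For a test function $\varphi\in C^\infty_c(\Omega,\R^2)$, we need
$$\int\langle F'(\D u_j),\D\varphi\rangle\to\int\langle F'(\D u),\D\varphi\rangle,\qquad \int\langle T(\D u_j),\D\varphi\rangle\to\int\langle T(\D u),\D\varphi\rangle.$$
\begin{itemize}
\item The map $F'$ is globally Lipschitz, since it is $1$-homogeneous, continuous, and smooth away from $0$. Therefore $F'(\D u_j)\to F'(\D u)$ in $L^2_\loc$.
\item For $T$, formula \eqref{eq:simpleT} shows that $T$ is continuous with $|T(A)|\le C|A|^2$. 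Passing to a subsequence converging a.e., the family $|\D u_j|^2$ is equi-integrable on compact sets, because $\D u_j$ converges strongly in $L^2$. Vitali's theorem then gives $T(\D u_j)\to T(\D u)$ in $L^1_\loc$.
\end{itemize}
Both weak formulations therefore pass to the limit, so $u$ solves \eqref{eq:EL}--\eqref{eq:inner}.

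The only delicate point is Step 1: \eqref{eq:compact} must be applied with a constant independent of the solution. It must also be applied on interior subdomains rather than on $\Omega$ itself, because we only have local bounds. Theorem \ref{thm:precise}\ref{it:W14} is stated with $C=C(F)$ and then localized, so applying it on a ball $\Omega''$ is legitimate. There is one more subtlety: the proof of \eqref{eq:compact} used that $\Sigma$ is discrete or that $u_j$ is harmonic. In the harmonic case the bound holds trivially from interior estimates, and in the other case the constant was shown not to depend on $u$. So the estimate is uniform in $j$ in either case.
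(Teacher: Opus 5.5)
Your proof is correct and follows the paper's argument: the uniform estimate \eqref{eq:compact}, together with the Fr\'echet--Kolmogorov criterion, gives precompactness of $(\D u_j)$ in $L^2_\loc$, and both equations then pass to the limit. You also supply details the paper leaves implicit, and they are sound: applying \eqref{eq:compact} on interior subdomains (needed because only local $W^{1,2}$ bounds are available), using the global Lipschitz continuity of $F'$, and using Vitali's theorem with $|T(A)|\le C|A|^2$ for the stress-energy tensor.
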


\begin{proof} 
	$L^2$ equiboundedness of the gradients and \eqref{eq:compact} imply that $\{\D u_j\}_j$ is precompact in $L^2$ due to the Fréchet–Kolmogorov compactness criterion.\ The statement follows.
\end{proof}

Thank to Proposition \ref{cor:strong} we are able to classify blowups of Lipschitz stationary points, see Lemma \ref{lemma:blowup} below.\
Let us first give the required definition:

\begin{definition}\label{def:rescaling}
	Let $u\in W^{1,\infty}_\loc(\Omega,\R^2)$.
	We say that $v\colon \R^2\to \R^2$ is a \textit{tangent map to $u$ at $x_0$}, and we write $v\in T_{x_0} u$, if there is a sequence $r_n\to 0$ such that
	$$u_{x_0,r_n} \equiv \frac{u(x_0 + r_n\cdot)-u(x_0)}{r_n} \to v \quad \text{ in } C^0_\loc(\R^2,\R^2).$$
\end{definition}

We make two simple remarks. First, it is clear from the definition that $u$ is differentiable at $x_0$ if and only if $T_{x_0}u$ consists of a single linear map. Second, for a Lipschitz map $u$, any $v\in T_{x_0}u$ is necessarily a \textit{globally} Lipschitz map, since 
\begin{equation}\label{eq:Lip}
	\|\D v\|_{L^\infty(\R^2)} \leq \lim_{\eps \to 0}\|\D u\|_{L^\infty(B_\e(x_0))}
\end{equation}
The next lemma gives, for Lipschitz stationary points, a description of the possible tangent maps at singular points:

\begin{lemma}\label{lemma:blowup}
	Let $u\in W^{1,\infty}(\Omega,\R^2)$ be as in Theorem \ref{thm:precise}. If $T(\D u(x_0))=0$ then any $v\in T_{x_0}u$ is a linear map which is either conformal or anti-conformal.
\end{lemma}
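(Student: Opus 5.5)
Fix $x_0$ with $T(\D u(x_0))=0$, understood in the sense that $x_0\in\Sigma$, i.e.\ $T(\D u)=\cof \D v$ vanishes at $x_0$ for the gradient $v$ of a harmonic function given by Corollary \ref{cor:hopf}. Take $v_\infty\in T_{x_0}u$ along a sequence $r_n\to0$. The plan is to pass to the limit in both the Euler--Lagrange system and the inner equation for the rescalings, then use the inner equation in the limit to force $T(\D v_\infty)=0$ a.e., and finally combine this with the outer equation to get linearity.

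\emph{Step 1: compactness.} The rescalings $u_n\equiv u_{x_0,r_n}$ are stationary points on $B_{R/r_n}$, since \eqref{eq:EL}--\eqref{eq:inner} are invariant under translations and dilations by the $2$-homogeneity of $F$ and the scaling of $T$. By \eqref{eq:Lip} they are uniformly Lipschitz with $u_n(0)=0$. Hence they are bounded in $W^{1,2}_\loc(\R^2)$, and up to a subsequence $u_n\w v_\infty$ weakly in $W^{1,2}_\loc$. Proposition \ref{cor:strong} then upgrades this to strong convergence, and shows that $v_\infty$ is a globally Lipschitz stationary point on $\R^2$.

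\emph{Step 2: the stress tensor vanishes in the limit.} We have $T(\D u_n)(y)=T(\D u(x_0+r_n y))$ by $0$-homogeneity of $T$ under the rescaling (since $\D u_n(y)=\D u(x_0+r_ny)$). On the other hand, $T(\D u)=\cof \D v$ is smooth and vanishes at $x_0$. Therefore $T(\D u_n)\to T(\D u(x_0))=0$ locally uniformly. By the strong $L^2$ convergence of the gradients and the continuity of $T$, we get $T(\D v_\infty)=0$ a.e. By Lemma \ref{lemma:zeroT}, this gives $\D v_\infty\in\CO\cup\ACO$ a.e., and also $F'(\D v_\infty)=f(\Id)\D v_\infty$ a.e. Since $v_\infty$ solves \eqref{eq:EL} and $f(\Id)\neq0$, $v_\infty$ is harmonic on $\R^2$.

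\emph{Step 3: Liouville and rigidity.} A globally Lipschitz harmonic map on $\R^2$ is affine by Liouville's theorem applied to $\D v_\infty$, and since $v_\infty(0)=0$ it is linear. Its constant gradient lies in $\CO\cup\ACO$, so $v_\infty$ is conformal or anti-conformal.

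The main point to check is Step 2. The key is that $T(\D u)$ is continuous, indeed smooth via Corollary \ref{cor:hopf}, so the hypothesis at $x_0$ transfers uniformly to the blowups. The interpretation of $T(\D u(x_0))=0$ has to use this continuous representative, since $\D u$ itself may not be defined pointwise at $x_0$. Notably, the harmonicity of the limit comes for free from Lemma \ref{lemma:zeroT}, so no separate argument is needed there.
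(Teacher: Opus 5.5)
Your proposal is correct and follows essentially the same route as the paper's proof. Both use strong compactness of the rescalings via Proposition \ref{cor:strong}, transfer $T(\D u(x_0))=0$ to the limit using the smoothness of $T(\D u)=\cof \D v$ from Corollary \ref{cor:hopf}, obtain harmonicity of the limit from \eqref{eq:F'conf}, and conclude with Liouville for the globally Lipschitz limit. The only cosmetic difference is that the paper records the quantitative decay $|T(\D u_{x_0,r_n}(x))|\leq C r_n |x|$, whereas you use mere continuity at $x_0$, which suffices.
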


\begin{proof}
	Let $v\in T_{x_0} u$ be the local uniform limit of $(u_{x_0,r_n})$.
	Note that, since $T(\D u)$ is smooth by Corollary \ref{cor:hopf}, we have
	$$\lvert T(\D u_{x_0,r_n}(x)) \rvert = \lvert T(\D u(x_0+r_n x )) \rvert =  \lvert T(\D u(x_0+r_n x ))- T(\D u(x_0)) \rvert \leq C r_n |x| \to 0$$ 
	as $n\to \infty$.\ Since the sequence $(u_{r_n,x_0})_n$ is locally equi-Lipschitz and consists of stationary points, by Proposition \ref{cor:strong} it converges strongly to another stationary point $v$, which by the above inequality satisfies $T(\D v)=0$ in $\R^2$.\ Hence, by \eqref{eq:F'conf}, we find that $\Delta v = 0$.\ As $v$ is also globally Lipschitz by \eqref{eq:Lip} and $v(0) = 0$, it must be linear and the statement follows.
	\end{proof}

\subsection{Domain variations as nonlinear Beltrami systems}\label{sec:nbs}
We aim to rewrite the inner variational equations as a nonlinear Beltrami system.\ As the result is local, we may assume from the start that 
\begin{equation}\label{eq:loc}
\text{ $\Omega$ is the ball $B_1$ and that   $\{T(\D u)=0\}=\{0\}$ .}
\end{equation} By 
Corollary \ref{cor:hopf} and \eqref{wirt}, 
the system \eqref{eq:inner} is equivalent to 
\begin{equation}
\label{eq:innercomplex}
\Psi'(\mb K_u) u_z \overline{u_{\bar z}} = \phi, \qquad \phi\colon B_1\to \C \text{ is  holomorphic},
\end{equation}
since we also assume that $\det \D u >0$ a.e.\ in $B_1$. We then have the following technical result:

\begin{lemma}\label{lemma:NB}
Let $u\in W^{1,2}(B_1,\C)$ be a weak solution of \eqref{eq:innercomplex} with $\det \D u> 0$ a.e.\ in $B_1$.\ There are constants $c_\Psi, C_\Psi > 0$ depending only on $\Psi$ such that the following holds.\ Given $r,R > 0$ fulfilling
\begin{equation}\label{eq:R}
	R  \geq c_\Psi{ \sqrt{ \|\phi\|_{L^\infty(B_r)}} }
\end{equation}
there exists a function $H\colon B_r\times (\C \setminus B_R(0)) \to \C$ such that $u$ satisfies
\begin{equation}
\label{eq:NBfinal}
u_{\bar z} = H(z,u_z) \quad \text{for a.e. $z\in B_r$ such that } u_z(z)\not \in B_R(0).
\end{equation}
Moreover, $H$ enjoys the following bounds:
\begin{equation}
\begin{gathered}
\begin{split}
\label{eq:condsHfinal}
&\sup_{z\in B_r} |H(z,\xi_1)-H(z,\xi_2)| \leq 
%\frac{L}{R^2} 
L\Big|\frac{1}{ \xi_1}-\frac {1}{\xi_2}\Big| ,\\
%|\xi_1-\xi_2|,\\
&\sup_{z\in B_r} |H(z,\xi)| + \sup_{\substack{ z_1, z_2\in B_r\\z_1\neq z_2}} \frac{|H(z_1,\xi)-H(z_2,\xi)|}{|z_1-z_2|^\alpha} \leq M,
\end{split}
\end{gathered}
\end{equation}
whenever $\xi,\xi_1,\xi_2\not \in B_R$ and $\alpha\in (0,1)$. 
The constants $L$ and $M$ satisfy the bounds
%$u$ solves an equation of the form \eqref{eq:NB}, where the constants $L,M,R$ in \eqref{eq:NB}--\eqref{eq:condsH} satisfy
\begin{equation}
\label{eq:NBest}
\begin{split}
L & \leq C_\Psi \|\phi\|_{L^\infty(B_{r})},\\
M& \leq \frac{C_\Psi}{R}\left( \|\phi\|_{L^\infty(B_{r})} +\mathrm{Lip}(\phi)\, r^{1-\alpha} \right).
\end{split}
\end{equation}
%where $C_\Psi$ is a constant depending only on $\Psi$.
\end{lemma}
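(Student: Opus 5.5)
Since $u_z\neq 0$ a.e.\ (as $\det \D u>0$) and the relation \eqref{eq:innercomplex} is pointwise, the plan is to solve \eqref{eq:innercomplex} pointwise for $u_{\bar z}$ in terms of $u_z$ and $z$ (through $\phi(z)$), by a fixed-point or implicit-function argument in the unknown $\eta \equiv \overline{u_{\bar z}}$. Write $\xi = u_z$ and $\eta=\overline{u_{\bar z}}$, so that $\mu_u = \bar\eta/\xi$ and $\mb K_u = \frac{|\xi|^2+|\eta|^2}{|\xi|^2-|\eta|^2}$. Equation \eqref{eq:innercomplex} then reads
\[
\eta = \frac{\phi(z)}{\xi\, \Psi'\big(\mb K(|\eta|/|\xi|)\big)}, \qquad \mb K(s)\equiv \frac{1+s^2}{1-s^2}.
\]
This motivates defining $H(z,\xi)$ as the complex conjugate of the unique solution $\eta$ with $|\eta|<|\xi|$ of $G_{z,\xi}(\eta)=\eta$, where $G_{z,\xi}(\eta)\equiv \phi(z)/(\xi\,\Psi'(\mb K(|\eta|/|\xi|)))$.

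First I would check that $G_{z,\xi}$ is a contraction on the disc $\{|\eta|\le |\xi|/2\}$ once $|\xi|\ge R$ with $R$ as in \eqref{eq:R}. Since $\Psi'\ge\lambda$ by \eqref{eq:ellPsi}, we get $|G|\le \|\phi\|_\infty/(\lambda|\xi|)\le |\xi|/2$ provided $|\xi|^2\ge 2\|\phi\|_\infty/\lambda$, which fixes $c_\Psi$. For the Lipschitz constant in $\eta$, the map $s\mapsto \Psi'(\mb K(s))$ on $[0,1/2]$ has derivative bounded by $C_\Psi$, because $\Psi''$ is bounded on the compact range $[1,5/3]$ (smoothness of $\Psi$). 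Hence $|G(\eta_1)-G(\eta_2)|\le \frac{C\|\phi\|_\infty}{\lambda^2 |\xi|^2}|\eta_1-\eta_2|$, which is at most $\tfrac12|\eta_1-\eta_2|$ after enlarging $c_\Psi$. Banach's fixed point theorem gives a unique fixed point $\eta(z,\xi)$ in the disc.

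It remains to check that $u$ actually satisfies \eqref{eq:NBfinal}. This requires knowing that the true value $\overline{u_{\bar z}}$ lies in the disc. It does, because $\overline{u_{\bar z}}=G(\overline{u_{\bar z}})$ and $|G(\eta)|\le \|\phi\|_\infty/(\lambda|\xi|)\le|\xi|/2$ for every $|\eta|<|\xi|$, since $\Psi'\ge\lambda$ on all of $[1,\infty)$. Uniqueness then identifies the two.

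For the bounds \eqref{eq:condsHfinal}, I would write $H = \bar\phi(z)\,\overline{\xi}^{-1} h$ with $h \equiv 1/\Psi'(\cdot)$, and use the fixed-point identity to differentiate implicitly.
\begin{itemize}
\item The sup bound $|H|\le \|\phi\|_\infty/(\lambda R)$ is immediate from the disc estimate.
\item For the dependence on $z$, perturbing $\phi(z)$ changes $G$ by at most $|\phi(z_1)-\phi(z_2)|/(\lambda|\xi|)$. Combined with the contraction factor $1/2$, this gives $|H(z_1,\xi)-H(z_2,\xi)|\le \frac{2}{\lambda R}\Lip(\phi)|z_1-z_2|\le \frac{C}{R}\Lip(\phi)r^{1-\alpha}|z_1-z_2|^\alpha$ on $B_r$. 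This yields the bound on $M$.
\item For the dependence on $\xi$, I will use the variable $w=1/\xi$. Then $G=\phi\, w\, h(|\eta||w|)$, and in $w$ it is Lipschitz with constant $\|\phi\|_\infty(\lambda^{-1} + C|\eta||w|)\le C\|\phi\|_\infty$, using $|\eta||w|\le 1/2$. The contraction again transfers this to $|H(z,\xi_1)-H(z,\xi_2)|\le 2C\|\phi\|_\infty|1/\xi_1-1/\xi_2|$, which is the bound on $L$.
\end{itemize}

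I expect the main obstacle to be the uniform control, independent of $R$, of the $\eta$- and $w$-derivatives of $h(|\eta|/|\xi|)$. The point is that the ratio $|\eta|/|\xi|$ stays in $[0,1/2]$, so only $\Psi''$ on a compact set enters. Care is also needed at $\eta=0$, where $|\eta|$ is not smooth; there Lipschitz estimates suffice.
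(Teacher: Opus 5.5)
Your argument is correct, but it takes a different route from the paper.

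\textbf{The paper's route.} The paper does not use a fixed-point argument. Taking moduli in \eqref{eq:innercomplex}, it obtains the scalar relation $\Psi'(\mb K_u)\sqrt{(\mb K_u-1)/(\mb K_u+1)} = |\phi|/|u_z|^2$. Via \eqref{eq:dereta} it shows that the left-hand side, as a function of $\mb K\in[1,\infty)$, is a smooth increasing bijection onto $[0,\Lambda)$; this is where convexity of $\Psi$ and the limit $\Psi'\to\Lambda$ enter. It then writes $H$ in closed form, $H(z,\xi)=\frac{\overline{\phi(z)}}{\bar\xi}\,\Gamma\big(|\phi(z)|/|\xi|^2\big)$, where $\Gamma$ is a fixed function on $[0,\Lambda)$. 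The bounds \eqref{eq:condsHfinal}--\eqref{eq:NBest} follow from a global Lipschitz bound on $\Gamma$, which requires treating $\mb K\ge 2$ and $\mb K\in[1,2]$ separately.

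\textbf{Your route.} Your Banach fixed-point argument in the unknown $\overline{u_{\bar z}}$ is more perturbative. It uses only $\Psi'\ge\lambda$ on $[1,\infty)$ and a bound on $\Psi''$ over $[1,5/3]$. So it needs neither convexity of $\Psi$ nor the upper bound $\Lambda$, and it never looks at large distortion. The price is that $H$ is only implicit, and your threshold $c_\Psi$ depends on $\lambda$ and $\max_{[1,5/3]}\Psi''$, rather than being the paper's $c_\Psi=2\Lambda^{-1/2}$. The paper's version also shows that the pointwise relation is uniquely solvable whenever $|\phi|/|\xi|^2<\Lambda$, not just in a small-distortion regime.

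\textbf{A point to make explicit.} In the estimate for the dependence on $\xi$, you evaluate $G_{z,\xi_1}$ at the fixed point $\eta_2$ belonging to $\xi_2$. A priori $\eta_2$ lies only in the disc of radius $|\xi_2|/2$. This is harmless, because every fixed point satisfies $|\eta_i|\le \|\phi\|_{L^\infty(B_r)}/(\lambda R)\le R/2\le \min\{|\xi_1|,|\xi_2|\}/2$ by your choice of $c_\Psi$. Hence both fixed points lie where $G_{z,\xi_1}$ is a $\tfrac12$-contraction and where $|\eta_2|\,|1/\xi_1|\le\tfrac12$, as your Lipschitz bound in $w=1/\xi$ requires.
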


\begin{proof}
We may rewrite \eqref{eq:innercomplex} as
\begin{equation}
\label{eq:rewritten}
u_{\bar z} = \frac{\bar \phi}{\overline{u_z}} \frac{1}{\Psi'(\mb K_u)}
\end{equation}
and so it suffices to express $\Psi'(\mb K_u)$ in terms of $\frac{|\phi|}{|u_z|^2}$.  Taking absolute values in \eqref{eq:innercomplex} we have
\begin{equation}\label{eq:psikmu}
\Psi'(\mb K_u) \sqrt{\frac{\mb K_u - 1}{\mb K_u + 1}}\overset{\eqref{eq:Kcomplex}}{=}  	\Psi'(\mb K_u) |\mu_u| 
	%\overset{\eqref{eq:Kcomplex}}{=} \Psi'(\mb K_u) \sqrt{\frac{\mb K_u - 1}{\mb K_u + 1}} \overset{\eqref{eq:innercomplex}}{=}
	= \frac{|\phi|}{|u_z|^2},
\end{equation}
so that we may set
\[
\eta(\mb K)\equiv \Psi'(\mb K) \sqrt{\frac{\mb K-1}{\mb K+1}}
\]
and show it is invertible as a next goal.\ Recalling \eqref{eq:Kcomplex} and using \eqref{eq:ellPsi}, we have
\begin{equation}
\label{eq:dereta}
\eta'(\mb K) = \frac{\Psi'(\mb K) + (\mb K^2-1)\Psi''(\mb K)}{(\mb K-1)^{\frac 1 2}(\mb K+1)^{\frac 3 2}} > 0
\end{equation}
hence $\eta\colon [1,\infty)\to [0,\infty)$ is strictly increasing.\ Noting that $\lim_{\mb K\to \infty} \eta(\mb K)= \Lambda$ by \eqref{eq:ellPsi}, we conclude that $\eta \colon [1,\infty)\to [0,\Lambda)$ 
is invertible with smooth inverse $\eta^{-1}\colon [0,\Lambda)\to [1,\infty)$.
Through this and \eqref{eq:rewritten}--\eqref{eq:psikmu} we can then set
$$
H(z,\xi)\equiv \frac{\overline{ \phi(z)}}{\bar \xi} \Gamma \Big( \frac{|\phi(z)|}{|\xi|^2}\Big), \qquad
\Gamma(s)\equiv\frac{1}{\Psi'(\eta^{-1}(s))},$$
provided that $\xi$ fulfills
\begin{equation}
	\label{eq:rangexi}
	\frac{\|\phi\|_{L^\infty(B_{r})} }{|\xi|^2}< \Lambda,
\end{equation}
so that $\Gamma$ is well-defined.\ Given $R$ as in \eqref{eq:R} and $c_\psi \equiv 2({\Lambda})^{-1/2}$,
 any $\xi \in B_R^c$ fulfills \eqref{eq:rangexi}.\ From \eqref{eq:innercomplex},\eqref{eq:rewritten} and the definition of $\Gamma$ we see that \eqref{eq:NBfinal} holds.\

It remains to show that $H$ satisfies \eqref{eq:condsHfinal}--\eqref{eq:NBest}. We first claim the Lipschitz estimate
\begin{align*}
\label{eq:LipGamma}
|\Gamma'(s)| = \bigg| \frac{\Psi''}{\eta'(\Psi')^2 }\bigg|(\eta^{-1}(s)) \leq C_\Psi,
\end{align*}
for any $s\in [0,\Lambda)$. Indeed, writing $\mb K=\eta^{-1}(s)$, from \eqref{eq:ellPsi} and \eqref{eq:dereta} we see that on the one hand we have $|\Gamma'(s) | \leq \sqrt{3}/\lambda^2$  whenever $\mb K\geq 2$, while on the other hand
$$|\Gamma'(s)| \leq \lambda^{-3} 3 \sqrt{3} \max_{\mb K\in [1,2]} \Psi''(\mb K).$$

We now take $R$ as in \eqref{eq:R}, $\xi_1,\xi_2 \in B_R^c$ and $z\in B_r$ and we estimate
\begin{align*}
|H(z,\xi_1)-H(z,\xi_2)|& \leq |\phi(z)|\Big\lvert \Gamma \Big(\frac{|\phi(z)|}{|\xi_1|^2}\Big)\Big\rvert \Big\lvert \frac{1}{\xi_1}-\frac{1}{\xi_2}\Big\rvert  + |\phi(z)| \Big\lvert \Gamma \Big(\frac{|\phi(z)|}{|\xi_1|^2}\Big)-\Gamma \Big(\frac{|\phi(z)|}{|\xi_2|^2}\Big)\Big\rvert\frac{1}{|\xi_2|}\\
&\leq |\phi(z)| \left(\lambda^{-1}+\frac{2|\phi(z)|}{R^2} \mathrm{Lip}(\Gamma)\right)  \Big\lvert \frac{1}{\xi_1}-\frac{1}{\xi_2}\Big\rvert \\
%& \leq \frac{C_\Psi |\phi(z)| (1+|\phi(z)|)}{R^3} |\xi_1-\xi_2|
& \leq  C_\Psi |\phi(z)| \Big( 1 + \frac{|\phi(z)|}{R^2}\Big) \Big\lvert \frac{1}{\xi_1}-\frac{1}{\xi_2}\Big\rvert \\
& \overset{\eqref{eq:R}}{\le} C_\Psi|\phi(z)|\Big\lvert \frac{1}{\xi_1}-\frac{1}{\xi_2}\Big\rvert
 \end{align*}
for a new constant $C_\Psi$. This already proves the first estimate in \eqref{eq:condsHfinal}. The proof of the second estimate is similar: for $z,z_1,z_2\in B_r$ with $z_1\neq z_2$ and $|\xi| > R$ we have 
\begin{align*}
|H(z,\xi)| +  \frac{|H(z_1,\xi)-H(z_2,\xi)|}{|z_1-z_2|^\alpha}
& \leq\frac { \|\phi\|_{L^\infty(B_r)}  }{ \lambda R } + \|\nabla_z H(\cdot,\xi)\|_{L^\infty(B_r)}(2r)^{1-\alpha} \\
& \leq \frac{C_\Psi}{R}\left[  \|\phi\|_{L^\infty(B_r)}  + \|\phi'\|_{L^\infty(B_r)} r^{1-\alpha} \right] 
\end{align*}
as wished.
\end{proof}

\subsection{Preliminary results on Beltrami systems}\label{sec:pr}

In order to prove Theorem \ref{thm:C1}, we will require the following variant of  \cite[Proposition 6.1]{Iwaniec2013}:
\begin{proposition}\label{prop:IKO}
Consider the equation
\begin{equation}
\label{eq:NBflambda}
f^\lambda_{\bar z} = H(z,\lambda + f^\lambda_z) \quad \text{for a.e.\ } z\in B_r(z_0) \text{ such that } \lambda + f^\lambda_z(z)\not \in B_R(0),
\end{equation}
where $H$ satisfies, for some $\alpha\in (0,1)$, \eqref{eq:condsHfinal} for constants $L, M > 0$.
Then, there is a constant $C_\alpha$, depending only on $\a$, such that with
\begin{equation}
\label{eq:lambda0}
\lambda_0 \geq C_\alpha \max\Big\{R,M,\sqrt{L} \Big\},
\end{equation}
there is a  family $(f^\lambda)_{|\lambda| \geq \lambda_0}$ of $C^{1,\alpha}$-solutions to \eqref{eq:NBflambda} with $f^\lambda(0)=0$ and, for all $z\in B_r(z_0)$,
\begin{align}
\label{eq:IKO1}
|\D f^\lambda(z)|&\leq \lambda_0,\\
\label{eq:IKO2}
|f^{\lambda_1}(z)-f^{\lambda_2}(z)|& \leq \sqrt{L}  \,\lambda_0 \Big|\frac{\lambda_1-\lambda_2}{\lambda_1 \lambda_2}\Big|,\\
\label{eq:IKO3}
|f^\lambda_z(z)|, |f^\lambda_{\bar z}(z)|& \leq \frac 1 2 |\lambda_0|\leq |\lambda + f^\lambda_z(z)|.
\end{align}
\end{proposition}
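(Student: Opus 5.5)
We construct $f^\lambda$ by a fixed point argument in the spirit of \cite[Proposition 6.1]{Iwaniec2013}, treating \eqref{eq:NBflambda} as a perturbation of $f_{\bar z}=0$. Let $\mathcal C$ be the Cauchy transform on $B_r(z_0)$, normalised so that $\mathcal C g(z_0)=0$: $\mathcal C g(z)=\frac 1\pi\int_{B_r(z_0)} g(w)\big(\frac{1}{z-w}-\frac{1}{z_0-w}\big)\d w$. Then $(\mathcal C g)_{\bar z}=g$ and $(\mathcal C g)_z=\mathcal S g$, where $\mathcal S$ is the Beurling transform of $g\chi_{B_r(z_0)}$. (I read the normalisation $f^\lambda(0)=0$ as being at the centre; the shift is harmless.) We look for $f^\lambda=\mathcal C g$, with $g$ solving
$$g(z)=H\big(z,\lambda+\mathcal S g(z)\big).$$
The right-hand side is defined only when $\lambda+\mathcal Sg\notin B_R$, so we work in the set $X=\{g\in C^{0,\alpha}(\overline{B_r(z_0)}):\|g\|_{C^{0,\alpha}}\le \kappa\lambda_0\}$ for a small absolute $\kappa$. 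The Hölder seminorm is scaled with $r^\alpha$, so all constants are dilation invariant.

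First we check that the map $\mathcal T g(z)\equiv H(z,\lambda+\mathcal Sg(z))$ sends $X$ to itself. By the Schauder-type estimate $\|\mathcal S g\|_{C^{0,\alpha}}\le C_\alpha\|g\|_{C^{0,\alpha}}$, we get $|\mathcal Sg|\le C_\alpha\kappa\lambda_0\le \frac12\lambda_0$. Hence $|\lambda+\mathcal Sg|\ge |\lambda|-\frac12\lambda_0\ge\frac12\lambda_0\ge R$ once $C_\alpha\ge 2$, so $H$ is evaluated in its domain. This already gives \eqref{eq:IKO3} and, with $f^\lambda_{\bar z}=g$, also \eqref{eq:IKO1}. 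Next we estimate the Hölder norm of $\mathcal T g$ using \eqref{eq:condsHfinal}. The $z$-dependence contributes at most $M$. The $\xi$-dependence contributes
$$L\Big|\frac1{\xi_1}-\frac1{\xi_2}\Big|\le \frac{4L}{\lambda_0^2}|\xi_1-\xi_2|, \qquad \xi_i=\lambda+\mathcal Sg(z_i),$$
which is at most $\frac{4L}{\lambda_0^2}C_\alpha\kappa\lambda_0|z_1-z_2|^\alpha$. Choosing $\lambda_0\ge C_\alpha\max\{M,\sqrt L\}$ therefore makes $\|\mathcal Tg\|_{C^{0,\alpha}}\le\kappa\lambda_0$.

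The same computation shows that $\mathcal T$ is a contraction, at least in the sup norm:
$$|\mathcal Tg_1-\mathcal Tg_2|\le \frac{4L}{\lambda_0^2}|\mathcal S(g_1-g_2)|.$$
The main obstacle is that $\mathcal S$ is not bounded on $L^\infty$. I would therefore run the contraction in $C^{0,\alpha}$ itself, which needs a Hölder estimate for $z\mapsto L(1/\xi_1(z)-1/\xi_2(z))$. That estimate brings in products of Hölder norms of $\mathcal S g_i$, each bounded by $\kappa\lambda_0$, divided by $\lambda_0^3$. The smallness factor $L/\lambda_0^2\le C_\alpha^{-2}$ remains, so we obtain a contraction for $C_\alpha$ large. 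The alternative is to get a fixed point by Schauder's theorem (using compactness of $C^{0,\alpha}\subset C^0$) and then obtain uniqueness and continuity in $L^2$, where $\mathcal S$ is an isometry. In either case $f^\lambda\in C^{1,\alpha}$.

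Finally, we prove \eqref{eq:IKO2} by comparing $g_i$, the fixed points for $\lambda_i$. We have
$$|g_1-g_2|\le L\Big|\frac{1}{\lambda_1+\mathcal Sg_1}-\frac1{\lambda_2+\mathcal Sg_2}\Big|.$$
Writing $\frac1{\lambda+s}=\frac1\lambda\cdot\frac{1}{1+s/\lambda}$ and using contraction absorption in $C^{0,\alpha}$ gives
$$\|g_1-g_2\|\lesssim L\Big|\frac1{\lambda_1}-\frac1{\lambda_2}\Big|+\frac{L}{\lambda_0^2}\|\mathcal S(g_1-g_2)\|,$$
hence $\|g_1-g_2\|_{C^{0,\alpha}}\le CL\,|\lambda_1-\lambda_2|/|\lambda_1\lambda_2|$. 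Applying $\mathcal C$, which gains a factor $r$ in sup norm, and using $L\le \lambda_0\sqrt L/C_\alpha$, yields \eqref{eq:IKO2}. Getting the exact constant $\sqrt L\lambda_0$, possibly after rescaling $r$, is bookkeeping.
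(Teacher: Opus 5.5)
Your route differs from the paper's. The paper does not redo the construction. It quotes \cite[Proposition 6.1]{Iwaniec2013}, which holds with the non-homogeneous threshold $\lambda_0\ge C_\alpha\max\{1,M,R,L\}$ and with $\lambda_0$ in place of $\sqrt L\lambda_0$ in \eqref{eq:IKO2}. It then rescales the target: $\tilde H(z,\xi)=H(z,\sqrt L\xi)/\sqrt L$ satisfies \eqref{eq:condsHfinal} with $\tilde L=1$, $\tilde M=M/\sqrt L$, $\tilde R=R/\sqrt L$, and $f^\lambda=\sqrt L\,\tilde f^{\lambda/\sqrt L}$ gives exactly \eqref{eq:lambda0} and \eqref{eq:IKO2}. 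A self-contained fixed-point proof, in which $L/\lambda_0^2$ appears directly as the smallness parameter, is a reasonable alternative. As written, however, its central step fails.

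\textbf{The gap: the contraction in $C^{0,\alpha}$.} You use it both for existence and for \eqref{eq:IKO2}. Hypothesis \eqref{eq:condsHfinal} only says that $H$ is Lipschitz in $1/\xi$ and H\"older in $z$. Bounding the H\"older seminorm of the majorant $z\mapsto L|1/\xi_1(z)-1/\xi_2(z)|$ says nothing about the H\"older seminorm of $H(z,\xi_1(z))-H(z,\xi_2(z))$.
\begin{itemize}
\item A superposition operator is Lipschitz on $C^{0,\alpha}$ essentially only when the nonlinearity has a Lipschitz (and $z$-H\"older) derivative in $\xi$, which \eqref{eq:condsHfinal} does not provide.
\item Already the admissible $z$-independent choice $H(z,\xi)=L\lvert\Re(1/\xi)\rvert$, with $\Re(1/\lambda)=0$, makes $g\mapsto H(\cdot,\lambda+\mathcal Sg)$ non-Lipschitz on $C^{0,\alpha}$. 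Compare $x\mapsto|x|-|x-\varepsilon|$: its $\alpha$-seminorm is of order $\varepsilon^{1-\alpha}\gg\varepsilon$.
\item So neither the Banach fixed point nor the bound $\|g_1-g_2\|_{C^{0,\alpha}}\le CL|\lambda_1-\lambda_2|/|\lambda_1\lambda_2|$ is justified.
\item Your $L^2$ fallback does not rescue \eqref{eq:IKO2}, since $\mathcal C$ does not map $L^2$ into $L^\infty$.
\end{itemize}

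\textbf{A fix.}
\begin{itemize}
\item Get existence from Schauder's theorem on $X$ with the $C^0$ topology. $X$ is convex and compact there. $\mathcal T$ is continuous on $X$: by interpolation between $C^0$ and $C^{0,\alpha}$ and boundedness of $\mathcal S$ on $C^{0,\beta}$ for $\beta<\alpha$, $\mathcal S(g_n-g)\to0$ uniformly, and $|\mathcal Tg_n-\mathcal Tg|\le 4L\lambda_0^{-2}|\mathcal S(g_n-g)|$.
\item For any two fixed points, $|\lambda_i+\mathcal Sg_i|\ge|\lambda_i|/2$ gives the pointwise bound
\[
|g_1-g_2|\le \frac{4L}{|\lambda_1\lambda_2|}\Big(|\lambda_1-\lambda_2|+|\mathcal S(g_1-g_2)|\Big).
\]
\item Absorb in $L^p(B_r)$ for a fixed $p>2$, using $4L\lambda_0^{-2}\|\mathcal S\|_{L^p\to L^p}\le\frac12$.
\item Conclude with $\|\mathcal Ch\|_{L^\infty(B_r)}\le C_p r^{1-2/p}\|h\|_{L^p(B_r)}$.
\end{itemize}
This yields uniqueness and $|f^{\lambda_1}-f^{\lambda_2}|\le CrL|\lambda_1-\lambda_2|/|\lambda_1\lambda_2|$. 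Hence \eqref{eq:IKO2} follows once $r\le1$ and $C_\alpha$ is large.

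\textbf{Two smaller points.}
\begin{itemize}
\item The statement is not dilation invariant. The H\"older seminorm in \eqref{eq:condsHfinal} is unscaled, and \eqref{eq:IKO2} carries no factor $r$. With your $r^\alpha$-scaled norm, the $z$-contribution to $\|\mathcal Tg\|$ is $(1+r^\alpha)M$, so you need $r\lesssim1$. This holds in the application, where $r=r_{N_0}\le1$.
\item $\kappa$ must be of order $1/C_\alpha$, not an absolute constant.
\end{itemize}
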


\begin{proof}
The statement is almost identical to that in \cite[Proposition 6.1]{Iwaniec2013}, except that the authors obtain the estimate
\begin{equation}\label{eq:initlambda0}
	\lambda_0 \geq C_\alpha \max\{1,M,R,L\},
\end{equation}
cf.\ \cite[(6.20)]{Iwaniec2013a}, and also that \eqref{eq:IKO2} holds with $\lambda_0$ in place of $\sqrt{L} \lambda_0$. 
This last estimate for $\lambda_0$ is, however, not scaling-invariant. To obtain the scaling-invariant estimate, let $\tilde H(z,\xi)\equiv H(z,\sqrt{L}\xi )/\sqrt L$, and note that $\tilde H$ satisfies \eqref{eq:condsHfinal} with $\tilde L=1$, $\tilde M = M/\sqrt{L}$ and also $\tilde R = R/\sqrt{L}$. Let  $\tilde f^{\tilde \lambda}$ be the corresponding solutions provided by \cite[Proposition 6.1]{Iwaniec2013a}, defined for $|\tilde \lambda|\geq \tilde \lambda_0$, where, using \eqref{eq:initlambda0},
$$\tilde \lambda_0 \geq C_\a \max\{1,\tilde R,\tilde M\}.$$
Hence, if we let $\lambda \equiv \sqrt L \tilde \lambda$ and $f^\lambda \equiv \sqrt{L} \tilde f^{\tilde \lambda}$, which is defined for $|\lambda|\geq \sqrt{L} \tilde \lambda_0\equiv \lambda_0$, the conclusion follows.
\end{proof}

Equipped with Proposition \ref{prop:IKO}, we can argue similarly to \cite[Proposition 7.1]{Iwaniec2013a} to find: 
\begin{proposition}\label{prop:qrdifference}
Let $u$ be a solution of \eqref{eq:NBfinal}, where $H$ satisfies \eqref{eq:condsHfinal}, and let $f^\lambda$ be as in Proposition \ref{prop:IKO}. Suppose further that $u_{\bar z}\in L^\infty(B_r(z_0))$. If 
\begin{equation}\label{eq:44}
|\lambda|\geq 4\|u_{\bar z}\|_{L^\infty(B_r(z_0))}+ 4\lambda_0 + R
\end{equation}
then $g^\lambda(z) \equiv \lambda z + f^\lambda (z)-u(z)$ is 3-quasiregular.
\end{proposition}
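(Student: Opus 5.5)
We have to show that $g=g^\lambda$ satisfies $|g_{\bar z}|\le k|g_z|$ a.e.\ with $k=\frac12$, because then $\mb K_g=\frac{1+k^2}{1-k^2}\le \frac{5}{3}\le 3$. We also need $g\in W^{1,2}_\loc$. This is clear: $u\in W^{1,2}$ and $f^\lambda\in C^{1,\alpha}$ by Proposition \ref{prop:IKO}.

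Compute the derivatives:
$$g_z=\lambda+f^\lambda_z-u_z, \qquad g_{\bar z}=f^\lambda_{\bar z}-u_{\bar z}.$$
The key point is to use the equations \eqref{eq:NBflambda} and \eqref{eq:NBfinal} simultaneously, through the Lipschitz bound \eqref{eq:condsHfinal}. We split $B_r(z_0)$ a.e.\ into two cases.

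\textbf{Case 1: $u_z(z)\notin B_R(0)$.} By \eqref{eq:IKO3}, $|\lambda+f^\lambda_z|\ge \frac12\lambda_0$. Since \eqref{eq:lambda0} allows $C_\alpha\ge 2$, this gives $\lambda+f^\lambda_z\notin B_R$. So both equations hold at $z$, and
$$|g_{\bar z}|=|H(z,\lambda+f^\lambda_z)-H(z,u_z)|\le L\frac{|g_z|}{|\lambda+f^\lambda_z|\,|u_z|}.$$
We want $L/(|\lambda+f^\lambda_z||u_z|)\le\frac12$. The first factor satisfies $|\lambda+f^\lambda_z|\ge\frac12\lambda_0\ge \frac12 C_\alpha\sqrt L$. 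The second only satisfies $|u_z|\ge R$, and $R$ is not comparable to $\sqrt L$ in general. So I would split Case 1 further.
\begin{itemize}
\item If $|u_z|\ge\frac12|\lambda|$: from \eqref{eq:44}, $|\lambda|\ge 4\lambda_0\ge 4C_\alpha\sqrt L$, so the product $|\lambda+f^\lambda_z|\,|u_z|\ge C_\alpha^2 L$. Taking $C_\alpha$ large gives $|g_{\bar z}|\le\frac12|g_z|$.
\item If $|u_z|<\frac12|\lambda|$: then $|g_z|\ge |\lambda|-\frac12\lambda_0-\frac12|\lambda|\ge \frac14|\lambda|$, using \eqref{eq:IKO3} and $\lambda_0\le|\lambda|/4$. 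On the other hand, $|g_{\bar z}|\le |f^\lambda_{\bar z}|+|u_{\bar z}|\le \frac12\lambda_0+\|u_{\bar z}\|_\infty\le \frac18|\lambda|+\frac14|\lambda|$ by \eqref{eq:44}. That gives only $|g_{\bar z}|\le\frac32|g_z|$, which is too weak. I would first try to sharpen the constants: \eqref{eq:44} gives $\lambda_0\le|\lambda|/4$ and $\|u_{\bar z}\|\le|\lambda|/4$ separately, and their sum is at most $|\lambda|/4$ because of the additive form. With that, $|g_{\bar z}|\le \frac12\lambda_0+\|u_{\bar z}\|_\infty\le\frac14|\lambda|$. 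Restricting this subcase to $|u_z|<\frac14|\lambda|$ gives $|g_z|\ge\frac12|\lambda|$, hence ratio $\le\frac12$. The remaining band $\frac14|\lambda|\le|u_z|$ is then handled by the Lipschitz bullet above, with $\frac14$ in place of $\frac12$ and $C_\alpha$ enlarged.
\end{itemize}

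\textbf{Case 2: $u_z(z)\in B_R(0)$.} Here $|u_z|<R\le|\lambda|/4$, so the crude estimate of the second bullet applies verbatim: $|g_z|\ge|\lambda|-\frac12\lambda_0-R\ge\frac12|\lambda|$ roughly, while $|g_{\bar z}|\le\frac12\lambda_0+\|u_{\bar z}\|_\infty\le\frac14|\lambda|$. Note that this case uses neither equation. The genuine content is in Case 1, and there the only equation-dependent ingredient is \eqref{eq:condsHfinal}.

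The main obstacle is bookkeeping: the constants in \eqref{eq:44} and \eqref{eq:lambda0} must be arranged so that the crude bounds give a distortion ratio strictly below $1$ (any $k<1/\sqrt2$ yields $\mb K\le3$). If the stated constants are too tight, I would lower the threshold separating the two Case 1 subcases and absorb the cost into $C_\alpha$, which is free to enlarge in \eqref{eq:lambda0}.
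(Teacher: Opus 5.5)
Your proof is correct and is essentially the paper's argument. Both combine two estimates: the crude bound $2|g^\lambda_{\bar z}|\le \lambda_0+2\|u_{\bar z}\|_{\infty}\le |g^\lambda_z|$ when $|u_z|$ is small compared with $|\lambda|$, and the estimate $|g^\lambda_{\bar z}|\le L|g^\lambda_z|/(|\lambda+f^\lambda_z|\,|u_z|)$ from \eqref{eq:condsHfinal} when $|u_z|$ is large. The only difference is where the split is drawn. You use the explicit threshold $|u_z|\ge \frac14|\lambda|$, which forces $|\lambda+f^\lambda_z|\,|u_z|\ge 2L$. The paper first splits at $|u_z|=R$, then according to whether $|\lambda+f^\lambda_z|\,|u_z|\ge 2L$, and shows that the failing case forces $|u_z|<\lambda_0$.

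Two minor points. The constants in \eqref{eq:44} are not tight: your first split at $\frac12|\lambda|$ already works, since $\frac12|\lambda|-\frac12\lambda_0\ge \lambda_0+2\|u_{\bar z}\|_\infty$. Also, do not fall back on $k<1/\sqrt2$, because the paper's notion of 3-quasiregularity is exactly $|g^\lambda_{\bar z}|\le\frac12|g^\lambda_z|$.
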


\begin{proof}
For the reader's convenience we present the short proof here. In complex notation, the claim is that, for a.e.\ $z\in B_r(z_0)$, we have
$$|g^\lambda_{\bar z}(z)|\leq \frac 1 2 |g^\lambda_z(z)|.$$
When $|u_z(z)|\leq R$, we simply note that:
$$2 |g^\lambda_{\bar z}|\leq 2 (|f^\lambda_{\bar z}| + |u_{\bar z}|)\overset{\eqref{eq:IKO3}}{\leq} \lambda_0 + 2 \|u_{\bar z}\|_\infty \overset{\eqref{eq:44}}{\leq} |\lambda|-\frac 1 2 \lambda_0-R \overset{\eqref{eq:IKO3}}{\leq} |\lambda|-|f^\lambda_z|-|u_z|\leq |g^\lambda_z|.$$
So assume that $|u_z(z)|\geq R$. Notice first that, again by \eqref{eq:IKO3}, 
$|\lambda+f^\lambda_z|
 \geq |\lambda|-\frac{\lambda_0}{2}
 > R,$
so both \(u\) and \(\lambda + f^\lambda\) solve  \eqref{eq:NBfinal}. Consequently, by \eqref{eq:condsHfinal},
\[
|g^\lambda_{\bar z}|
 =
|H(z,\lambda+f^\lambda_z)-H(z,u_z)|\leq 
L\left|
\frac1{\lambda+f^\lambda_z}-\frac1{u_z}
\right|  \\
=
L
\frac{|\lambda+f^\lambda_z-u_z|}
     {|\lambda+f^\lambda_z|\,|u_z|}
     = L \frac{|g^\lambda_z|}     {|\lambda+f^\lambda_z|\,|u_z|}.
\]
Hence we are done if 
$|\lambda+f^\lambda_z|\,|u_z|\geq 2L$; in the opposite case, since $|\lambda|\geq 4 \lambda_0$,
\[
|u_z|
<
\frac{2L}{|\lambda+f^\lambda_z|}
\leq
\frac{2\lambda_0^2}{|\lambda|-\lambda_0/2}
\leq
\lambda_0,
\]
since we may assume that $C_\alpha\geq 1$, and then $\lambda_0\geq \sqrt{L}$, compare \eqref{eq:lambda0}. Therefore
$$2 |g^\lambda_{\bar z}|\leq 2 (|f^\lambda_{\bar z}| + |u_{\bar z}|)\overset{\eqref{eq:IKO3}}{\leq} \lambda_0 + 2 \|u_{\bar z}\|_\infty \overset{\eqref{eq:44}}{\leq} |\lambda|-\frac 3 2 \lambda_0 \overset{\eqref{eq:IKO3}}{\leq} |\lambda|-|f^\lambda_z|-|u_z|\leq |g^\lambda_z| $$
as wished.
\end{proof}

Finally, we note here the following modification of the main result of \cite{Iwaniec2013a}:

\begin{theorem}\label{thm:IKO}
Suppose that $u\in W^{1,2}(B_r(z_0),\C)$ is such that $u_{\bar z} \in L^\infty(B_r(z_0))$ and solves \eqref{eq:NBfinal},
where $H$ satisfies \eqref{eq:condsHfinal}-\eqref{eq:NBest}. 
Then $u$ is locally Lipschitz, and for a.e.\ $z\in B_{r/3}(z_0)$ we have the estimate
\begin{equation}
\label{eq:IKO}
|\D u (z)|\leq 6 \frac {  \|u -u(z_0)\|_{L^\infty(B_r(z_0))}}{r} + 6  \|u_{\bar z}\|_{L^\infty(B_r(z_0))} + 6\lambda_0,
\end{equation}
where $\lambda_0$ is as in \eqref{eq:lambda0}.
\end{theorem}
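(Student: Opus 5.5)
The strategy is the standard one from \cite{Iwaniec2013a}: compare $u$ with the family of exact solutions $\lambda z + f^\lambda$ from Proposition \ref{prop:IKO}, and use that their differences are quasiregular. Quasiregular maps obey a Lipschitz-type estimate at points of differentiability, and this transfers to a pointwise bound on $\D u$.

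\textbf{Step 1 (setup).} Translate so that $z_0=0$ and $u(z_0)=0$. Set $m\equiv\|u\|_{L^\infty(B_r)}$ and $\ell\equiv\|u_{\bar z}\|_{L^\infty(B_r)}$. Take $\lambda_0$ as in \eqref{eq:lambda0} and, for $|\lambda|\ge \lambda_* \equiv 4\ell+4\lambda_0+R$, let $f^\lambda$ be the solutions of Proposition \ref{prop:IKO}. By Proposition \ref{prop:qrdifference}, each $g^\lambda=\lambda z+f^\lambda-u$ is $3$-quasiregular on $B_r$. In particular $g^\lambda$ is continuous, and it is either constant or open and discrete. We may assume $R\le\lambda_0$, since $\lambda_0\ge C_\alpha R$ with $C_\alpha\ge1$.

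\textbf{Step 2 (topological/degree argument).} Fix a point $w\in B_{r/3}$ of differentiability of $u$ where \eqref{eq:NBfinal} holds. Suppose, for contradiction, that $|u_z(w)|$ exceeds the right-hand side of \eqref{eq:IKO} up to the $\ell$ term. Choose $\lambda$ with $|\lambda|\ge\lambda_*$ such that $\lambda+f^\lambda_z(w)=u_z(w)$. This should be possible by a continuity and degree argument in $\lambda$. The map $\lambda\mapsto\lambda+f^\lambda_z(w)$ is a small perturbation of the identity on $\{|\lambda|\ge\lambda_0\}$, by \eqref{eq:IKO3} and the continuity coming from \eqref{eq:IKO2}, so it covers every value of modulus larger than roughly $|\lambda_*|+\lambda_0/2$.

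With this choice, $g^\lambda_z(w)=0$. Since $g^\lambda$ is quasiregular and differentiable at $w$, this forces $g^\lambda_{\bar z}(w)=0$ as well, so $\D g^\lambda(w)=0$. A nonconstant quasiregular map has a nonvanishing Jacobian at a.e. point, and the argument only needs a.e. $w$. Hence, after discarding a null set of $w$, $g^\lambda$ must be constant, i.e.\ $u=\lambda z+f^\lambda+c$ on $B_r$.

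Then $|u(z)-u(w)|\ge |\lambda||z-w|-\lambda_0|z-w|$ by \eqref{eq:IKO1}. Evaluating at points $z$ with $|z-w|\approx r/2$ gives $|\lambda|/2\cdot r/2\le 2m$ roughly. This yields $|\lambda|\lesssim m/r$, and with the constant $6$ this contradicts the assumed size of $|u_z(w)|\approx|\lambda|$.

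\textbf{Alternative route and bookkeeping.} If the constancy route is too crude, I would instead adapt \cite[Proposition 7.1]{Iwaniec2013a} directly. There one uses that a nonconstant quasiregular $g^\lambda$ with $g^\lambda(w)=\lambda w+f^\lambda(w)-u(w)$ must attain values on $\partial B_{r/2}(w)$ that wind around, and compares $|\lambda| r/2$ against $2m+\lambda_0 r$. Either way, one gets $|u_z(w)|\le 2\cdot 2m/(r/3)\cdot$const, plus $\lambda_*+\lambda_0$. Finally, $|\D u|\le |u_z|+|u_{\bar z}|$ gives \eqref{eq:IKO} after collecting constants. Local Lipschitz continuity follows because the bound holds at a.e. point of every smaller ball.

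\textbf{Main obstacle.} The hardest step is the matching in Step 2: showing that $\lambda\mapsto\lambda+f^\lambda_z(w)$ hits $u_z(w)$. This requires continuity of $f^\lambda_z$ in $\lambda$, which \eqref{eq:IKO2} only gives at the level of $f^\lambda$. To get around this I would work with values rather than derivatives. I would use the degree of $g^\lambda$ on small circles around $w$, together with the continuity of $\lambda\mapsto g^\lambda$ in $C^0$ from \eqref{eq:IKO2}, and choose $\lambda$ so that $g^\lambda$ has vanishing linearization at $w$. This is the delicate topological step, handled as in \cite{Iwaniec2013a}.
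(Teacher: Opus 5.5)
Your Step 2 has a genuine gap, and it is not the matching issue you flag at the end. You fix $w$, choose $\lambda=\lambda(w)$ with $g^\lambda_z(w)=0$, and conclude that $g^\lambda$ is constant because a nonconstant quasiregular map has $J_{g^\lambda}\neq0$ off a null set, ``and the argument only needs a.e.\ $w$''. That null set depends on $\lambda$, and $\lambda$ is chosen \emph{after} $w$. A union of null sets over uncountably many $\lambda$ need not be null, so nothing can be discarded. (The inequality $|g^\lambda_{\bar z}(w)|\le\frac12|g^\lambda_z(w)|$ is fine for all $\lambda$ at once, since its exceptional set involves only $u$; the failure is the passage from $\D g^\lambda(w)=0$ to constancy.)

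Concretely, take $H\equiv0$, so $f^\lambda\equiv0$, and $u(z)=Nz^2$. Then $g^\lambda(z)=\lambda z-Nz^2$ is holomorphic and nonconstant. Yet \emph{every} $w$ with $2N|w|\ge\lambda_*$ is a critical point of $g^{\lambda(w)}$ with $\lambda(w)=2Nw$, and your argument would force $u$ to be affine. Relatedly, in your scheme the term $\|u-u(z_0)\|_{L^\infty}/r$ enters only in the degenerate constant case, whereas in the real argument it decides for which $\lambda$ the comparison maps are injective. Your ``alternative route'' mentions the right boundary comparison ($|\lambda|r$ versus $m+\lambda_0 r$). But your fix still aims at choosing $\lambda$ with vanishing linearization at $w$, which is the wrong target: even quasiconformal homeomorphisms such as $z|z|^{\epsilon}$ have zero derivative at a point of differentiability.

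The paper's proof simply invokes \cite[\S 8]{Iwaniec2013a} together with the rescaling of Proposition \ref{prop:IKO}. The mechanism there is the one in Case 2 of the proof of Theorem \ref{thm:C1precise}: work with difference quotients, not pointwise derivatives.

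Let $m=\|u-u(z_0)\|_{L^\infty(B_r(z_0))}$ and $\Lambda\approx\max\{3m/r+2\lambda_0,\,4\|u_{\bar z}\|_\infty+4\lambda_0+R\}$. For $|\lambda|>\Lambda$ one has $|g^\lambda(z)-\lambda(z-z_0)-g^\lambda(z_0)|\le m+\lambda_0|z-z_0|$. Hence $\deg(g^\lambda,B_\rho(z_0),y)=1$ for every $y\in g^\lambda(B_{r/3}(z_0))$ and $\rho$ close to $r$. Since $g^\lambda$ is $3$-quasiregular, every preimage has positive local index, so $g^\lambda$ is injective on $B_{r/3}(z_0)$.

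Injectivity means that for $z_1\neq z_2$ in $B_{r/3}(z_0)$ the quotient $\frac{u(z_1)-u(z_2)}{z_1-z_2}$ never equals $G(\lambda)=\lambda+\frac{f^\lambda(z_1)-f^\lambda(z_2)}{z_1-z_2}$ for $|\lambda|>\Lambda$. By \eqref{eq:IKO2}, $G$ is continuous, and by \eqref{eq:IKO1}, $|G(\lambda)-\lambda|\le\lambda_0$. A degree argument on annuli $\{\Lambda<|\lambda|<\Lambda'\}$ then shows that $G$ attains every $\zeta$ with $|\zeta|>\Lambda+\lambda_0$.

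So all difference quotients of $u$ on $B_{r/3}(z_0)$ are bounded by $\Lambda+\lambda_0$. This gives local Lipschitz continuity and \eqref{eq:IKO}, up to numerical constants, in one stroke. It also uses only $C^0$-continuity of $\lambda\mapsto f^\lambda$, so your concern about continuity of $\lambda\mapsto f^\lambda_z(w)$ never arises.
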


\begin{proof}
This follows either by repeating the arguments in \cite[\S 8]{Iwaniec2013a}, which lead to (8.2) therein, or by taking estimate (8.2) for granted and then arguing as in Proposition \ref{prop:IKO} to obtain the correct scaling-invariant version.
\end{proof}

\begin{remark}
For the reader unfamiliar with \cite{Iwaniec2013}, Theorem \ref{thm:IKO} may appear rather strange. In the autonomous case, i.e.\ when $H$ does not depend on $z$, \eqref{eq:NBfinal}--\eqref{eq:condsHfinal} asserts that, at points where $\D u$ is extremely large, $\D u$ is approximately an orientation-preserving conformal matrix; note that, to be able to say so, the assumption $u_{\bar z}\in L^\infty$ is crucial. Otherwise, \eqref{eq:NBfinal}--\eqref{eq:condsHfinal} entails essentially no assumption on $u$ at points with non-large gradient. Thus Theorem \ref{thm:IKO} can be read as a differential-inclusion version of the classical result \cite{Chipot1986}, see also \cite{Dolzmann2013,Scheven2009a} for further improvements.
\end{remark}

\subsection{Proof of Theorem \ref{thm:C1}}\label{sec:proofC1}

In this subsection we finally prove Theorem \ref{thm:C1}. In fact, we have a more precise result:

\begin{theorem}\label{thm:C1precise}
Let $u$ be as in Theorem \ref{thm:precise} and suppose in addition that $\det \D u\geq 0$ a.e.\ in $\Omega$. Then $u\in C^1(\Omega)\cap C^\infty(\Omega\setminus \Sigma')$, where
$$\Sigma' = \{x\in \Omega: \D u(x)=0\}\subseteq \Sigma.$$
\end{theorem}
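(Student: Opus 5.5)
We work locally. By Theorem \ref{thm:precise} we may assume that $u$ is not harmonic, that $\Sigma$ is discrete, and, as in \eqref{eq:loc}, that $\Omega=B_1$ with $\Sigma=\{0\}$. By Theorem \ref{thm:UCP}, $\det \D u>0$ a.e., so \eqref{eq:innercomplex} holds with $\phi$ holomorphic and $\phi(0)=0$; the vanishing comes from $T(\D u(0))=0$. The plan is: (a) show $u$ is Lipschitz near $0$; (b) classify its tangent maps at $0$ by Lemma \ref{lemma:blowup}; (c) prove that $\D u$ is continuous at $0$, splitting according to whether $0$ is a tangent map.

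For (a), observe from \eqref{eq:rewritten} that $|u_{\bar z}|\le |\phi|/(\lambda |u_z|)$, and that $|u_{\bar z}|\le|u_z|$. Hence $|u_{\bar z}|^2\le |\phi|/\lambda$, so $u_{\bar z}\in L^\infty$. Lemma \ref{lemma:NB} and Theorem \ref{thm:IKO} then give local Lipschitz bounds. We apply them at scale $r$ around $0$: \eqref{eq:NBest} gives $L\lesssim \|\phi\|_{L^\infty(B_r)}\lesssim r$, and $M$ and $R$ are bounded accordingly. Therefore \eqref{eq:IKO} yields, for the rescalings $u_{0,r}$ of Definition \ref{def:rescaling}, a bound $\|\D u_{0,r}\|_{L^\infty(B_{1/3})}\lesssim \|u_{0,r}\|_{L^\infty(B_1)} + o(1)$. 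The important feature is that the $\lambda_0$-terms scale away, because $\sqrt L\sim \sqrt r$ and similar bounds hold for the others. By Lemma \ref{lemma:blowup}, each tangent map $v\in T_0u$ is linear and conformal, since $\det\D u\ge0$ rules out anticonformal maps other than $0$. So $v(z)=az$ for some $a\in\C$.

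For (c) there are two cases. Case 1: some tangent map is $0$, say along $r_n$. Then $\|u_{0,r_n}\|_{L^\infty(B_1)}\to 0$, and the rescaled \eqref{eq:IKO} gives $\|\D u\|_{L^\infty(B_{r_n/3})}\to 0$. To pass from the sequence $r_n$ to all $r$, I would use the comparison maps $g^\lambda$ of Proposition \ref{prop:qrdifference}, which are $3$-quasiregular, in the spirit of \cite{Iwaniec2013}. Quasiregularity of $\lambda z+f^\lambda-u$ for all large $|\lambda|$ forbids the gradient from leaving a small ball around $0$ after it has entered it at a smaller scale. In effect, the topological degree of $g^\lambda$ on small circles is positive, and this prevents $u_z$ from winding around the value $\lambda$. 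This should give $\D u(z)\to 0$ as $z\to 0$, i.e.\ $u\in C^1$ with $\D u(0)=0$. Case 2: every tangent map is $az$ with $a\neq 0$. The set $T_0u$ is connected in $C^0_{\mathrm{loc}}$ and compact, so $|a|$ is bounded below uniformly, say $|a|\ge 2m>0$. I would show that this forces $|\D u|\ge m$ a.e. in a small punctured neighbourhood. The tool is again the degree and quasiregularity argument: $u_{0,r}$ is uniformly close to a conformal linear map, and the comparison maps $g^\lambda$ with $|\lambda|$ near $a$ localise the gradient near the circle of radius $|a|$, possibly after a further rescaling argument. Once $|\D u|\ge m$ on a full ball $B_\rho$, Theorem \ref{thm:precise}\ref{it:lowerbound} gives $u\in C^\infty(B_\rho)$, and in particular $0\notin\Sigma'$.

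Combining the two cases yields $u\in C^1(\Omega)$, with smoothness away from $\Sigma'=\{\D u=0\}\subseteq\Sigma$. The main obstacle is the upgrade in step (c) from $C^0$ closeness of the blowups to $L^\infty$ control of the gradient at all scales, in both cases. This is where the topological arguments via the quasiregular family $g^\lambda$ must be carried out carefully, in a scale-invariant way using the $\sqrt L$ form of \eqref{eq:lambda0}.
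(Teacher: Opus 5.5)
\textbf{There is a genuine gap in Case 2.} Two remarks first. Your Case 1 is already finished at the point where you think an obstacle remains. Estimate \eqref{eq:IKO}, applied at scale $r_n$, bounds $|\D u|$ a.e.\ on the \emph{whole} ball $B_{r_n/3}$, and these balls contain all smaller scales. So $\|\D u\|_{L^\infty(B_{r_n/3})}\to0$ already gives, for every $\eps>0$, some $\delta>0$ with $|\D u|<\eps$ on $B_\delta\setminus\{0\}$, using smoothness off $\Sigma$. This is exactly how the paper concludes; no passage from $r_n$ to all $r$, and no degree argument, is needed there. Also, in Case 2 you only need one nonzero tangent map $az$; the compactness/connectedness of $T_0u$ plays no role.

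Case 2 is the heart of the theorem, and your proposal only gestures at it. Your suggested mechanism is also pointed the wrong way. With $|\lambda|$ near $|a|$, the blow-up of $g^\lambda=\lambda z+f^\lambda-u$ at $0$ is $(\lambda+\partial_z f^\lambda(0)-a)z+\partial_{\bar z}f^\lambda(0)\bar z$, which need not be invertible. You then get no degree-one information and no injectivity. Moreover, what is needed is only a lower bound on $|\D u|$, not localisation near the circle of radius $|a|$.

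The missing idea has two topological steps.
\begin{enumerate}
\item \emph{Injectivity via degree in $z$.} Let $\lambda$ range over the annulus $\rho<|\lambda|<4\rho$ with $\rho=|a|/32$, so $|\lambda|\ll|a|$, and fix $H$ at a scale where $\lambda_0\le\rho/2$. Then the blow-ups of $g^\lambda$ along $r_n$ are uniformly close to an invertible orientation-preserving linear map. Hence $\deg(g^\lambda,B_{r_{N_1}},y)=1$ for all small $y$ and all $\lambda$ in the annulus simultaneously. Combined with $3$-quasiregularity (Proposition \ref{prop:qrdifference}) and Lipschitz bounds uniform in $\lambda$, every $g^\lambda$ is injective on one fixed ball $B_{2\delta}$.
\item \emph{Trapping difference quotients via degree in $\lambda$.} Injectivity gives $\frac{u(z+r)-u(z)}{r}\neq G_{r,z}(\lambda)\equiv\lambda+\frac{f^\lambda(z+r)-f^\lambda(z)}{r}$ for all $\lambda$ in the annulus. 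The map $G_{r,z}$ is continuous in $\lambda$ by \eqref{eq:IKO2} and $C^0$-close to the identity by \eqref{eq:IKO1}, so its image contains the circle $\{|w|=2\rho\}$. Thus $|u(z+r)-u(z)|/r\neq2\rho$ on the connected set $B_\delta\times(0,\delta)$. Along $(0,r_n)$ this quotient tends to $|a|>2\rho$, so $|\partial_1u|\ge2\rho$ a.e.\ in $B_\delta$, and Theorem \ref{thm:precise}\ref{it:lowerbound} then gives smoothness.
\end{enumerate}
Without this two-step argument, your proposal has no mechanism for upgrading $C^0$-closeness of the blow-ups to a pointwise lower bound on $|\D u|$.
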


\begin{proof}
We can assume without loss of generality that $\Sigma$ is discrete, as otherwise there is nothing to show by Theorem \ref{thm:precise}.\ Therefore we may also assume that $\Omega=B_1$ and $\Sigma=\{0\}$, so $T(\D u)(0) = 0$. We can also suppose, by Theorem \ref{thm:UCP}, that $\det \D u>0$ a.e., as otherwise $u$ is harmonic.

In $B_1$, since $\det \D u>0$ a.e., the inner variational equations \eqref{eq:inner} can be rewritten as \eqref{eq:innercomplex} and this, combined with the assumption $\det \D u = |u_z|^2-|u_{\bar z}|^2\geq 0$ and \eqref{eq:ellPsi}, yields
\begin{equation}
\label{eq:Linfty}
\lambda |u_{\bar z}|^2 \leq \Psi'(\mb K_u) \lvert u_z\rvert \lvert u_{\bar z} \rvert = |\phi|.
\end{equation}
In particular $u_{\bar z}\in L^\infty(B_1)$ and the assumption $T(\D u)(0)=0$ is equivalent to
\begin{equation}
\label{eq:zerohopf}
\phi(0)=0.
\end{equation}

Let us consider, as in Definition \ref{def:rescaling}, the set of tangent maps $T_0 u$. In particular, note that if $v\in T_0u$, then there is a sequence$(u_{0,r_n})$ of rescalings,
where $r_n\to 0$, which converges to $v$ locally uniformly and weakly in $W^{1,2}_\loc$. Since $\det \D u\geq 0$ a.e., by the weak continuity of the Jacobian determinant, see \cite[Theorem 2.3]{Muller1999a}, we also obtain that $\det \D v\geq 0$ a.e.\ in $\R^2$. 
Thus Lemma \ref{lemma:blowup} implies that $v(z)=a z$ for some $a\in \C$.\ We now distinguish two cases. 

\medskip
\textbf{Case 1:} \textit{suppose that $0\in T_0 u$}. Then we can choose $r_n\to 0$, $r_n\leq 1$, such that $u_{0,r_n}\to 0$ locally uniformly.  By passing to a subsequence, due to \eqref{eq:zerohopf} we can further suppose that $\|\phi\|_{L^\infty(B_{r_n})}^{3/8}\leq \min\{1,c_\Psi^{-1}\}$. If we set
\begin{equation}\label{eq:choiceR}
R = R_n \equiv \max\Big\{c_\Psi \|\phi\|_{L^\infty(B_{r_n})}^{\frac 1 2}, r_n^{\frac{1-\alpha}{2}}\Big\},
\end{equation}
then we see that \eqref{eq:R} is fulfilled for such $R_n$, if $r = r_n$. Hence, by Lemma \ref{lemma:NB}, there is $H_n\colon B_{r_n}\times (\C \setminus B_{R_n}(0))\to \C$ satisfying \eqref{eq:NBfinal}--\eqref{eq:NBest}. Applying Theorem \ref{thm:IKO} and \eqref{eq:lambda0}, we then see that for a.e.\ $z\in B_{r_n/3}$ we have
\begin{align}
\begin{split}
\label{eq:Dwn}
|\D u(z)|
 \leq 6 \frac{\|u - u(0)\|_{L^\infty(B_{r_n})}}{r_n} +6\lambda^{-\frac{1}{2}} \|\phi\|^{\frac 1 2}_{L^\infty(B_{r_n})} + C_\alpha\max\{R_n,M_n,  \sqrt{L_n}\};
\end{split}
\end{align}
here, the middle term on the right-hand side has been bounded using  also \eqref{eq:Linfty}.  Now, by \eqref{eq:NBest},
\begin{equation}\label{eq:na}
	\begin{split}
M_n+ \sqrt{L_n}\le C\left[ \sqrt{\|\phi\|_{L^\infty(B_{r_n})}}+ \frac{{\|\phi\|_{L^\infty(B_{r_n})}} }{R_n} +\mathrm{Lip}(\phi)\, \frac{r_n^{1-\alpha}}{R_n}\right],
\end{split}
\end{equation}
Each addendum on the right-hand side of \eqref{eq:Dwn} converges to zero: the first due to the assumption that $u_{0,r_n}\to 0$ locally uniformly; the second, due to \eqref{eq:zerohopf}; and the third by \eqref{eq:na} and our choice of $R_n$ in \eqref{eq:choiceR}.\ Hence, when $0\in T_0u$, we have shown that for all $\eps > 0$ there exists $\delta > 0$ such that $\D u \in B_\eps(0)$ a.e.\ in $B_\delta$, and in fact everywhere in $B_\delta \setminus \{0\}$ due to the regularity provided by Theorem \ref{thm:partialreg}.\  From this property one immediately deduces that $u$ is differentiable at $0$ with $\D u(0)=0$, and that $\D u$ extends continuously to $0$. 

\medskip
\textbf{Case 2:} \textit{suppose that $0\not \in T_0 u$}.\ Let $A \in \R^{2\times 2}$ be any non-zero matrix for which its associated linear map $L_A$ belongs to $T_0u$. Thus, from the discussion before Case 1, $[A]_{\mathcal H} = a$ and $[A]_{\overline{\mathcal H}}= 0$.\ Let $r_n > 0$ be, as usual, the sequence of radii for which $u_{0,r_n} \to L_A$ locally uniformly.\ As in the previous case, we can choose $R_n\to 0$ in such a way that $M_n+\sqrt{L_n}\to 0$, so that also the corresponding constant in \eqref{eq:lambda0}, which we denote by $\lambda_{0,n}$, vanishes as $n\to \infty$.\ In particular, from now on we will take $\lambda$ to lie in the annulus
\begin{equation}
\label{eq:defrho}
A_\rho\equiv \{\lambda\in \C:  \rho < |\lambda| < 4 \rho\}, \qquad \rho \equiv \frac{|a|}{32}.
\end{equation}
Since $u_{0,r_n}\to L_A$ and $\lambda_{0,n}\to 0$, we find a large $N_0$ such that 
\begin{equation}
\label{eq:closetoa}
\|u_{0,r_n}-a\|_{L^\infty(B_{1})}\leq 16 \rho\text{ and }  \lambda_{0,n}\leq \frac \rho 2, \quad \forall n\geq N_0.
\end{equation} 
In addition, by increasing $N_0$ even further, since $\|u_{\bar z}\|_{L^\infty(B_{r_n})} \to 0$ by \eqref{eq:zerohopf}, we can suppose that
$$\rho \geq 4 \|u_{\bar z}\|_{L^\infty(B_{r_n})} + 4 \lambda_{0,n} + R_n,\quad \forall n\geq N_0.$$
As in the previous case, by Lemma \ref{lemma:NB}, there is $H_{N_0}\colon B_{r_{N_0}}\times (\C \setminus B_{R_{N_0}}(0))\to \C$ satisfying \eqref{eq:NBfinal}--\eqref{eq:NBest}.
By \eqref{eq:closetoa}, for $\lambda \in A_\rho$ we can consider $f^\lambda,g^\lambda\colon B_{r_{N_0}}\to \C$ as in Propositions \ref{prop:IKO} and \ref{prop:qrdifference} respectively, obtained for $H_{N_0}$ in place of $H$, and the last displayed inequality guarantees that
\begin{equation}\label{eq:qrag}
	g^\lambda \text{ is $3$-quasiregular in $B_{r_{N_0}}$  \quad $\forall \lambda \in A_\rho$}.
\end{equation}

We claim that, if we choose $N_1\geq N_0$ large enough then there exists $R_1> 0$ such that
\begin{equation}\label{eq:deg}
	\deg(g^\lambda, B_{r_{N_1}}, y) = 1, \quad \forall \lambda \in A_\rho,\forall y \in B_{R_1}.
\end{equation}
To show this, we consider the tangent map of $g^\lambda$ at $0$ along the sequence $(r_n)$.\ Since $f\in C^{1,\alpha}$, this tangent map is
\[
g_\infty^\lambda(z) = \lambda z + \partial_zf^\lambda(0)z + \partial_{\bar z}f^\lambda(0)\bar z - a z.
\]
In view of \eqref{eq:IKO1} and \eqref{eq:defrho} we see that, when $\lambda \in A_\rho$, the first two terms on the right-hand side are bounded by $(4\rho + \lambda_{0,N_0})|z|\leq 5 \rho|z|\leq \frac{|a|}{4} |z|$ and hence $g_\infty^\lambda$ is an invertible affine map, thus
\begin{equation}\label{eq:ginf}
\deg(g_\infty^\lambda, B_{1}, y) = 1, \quad \forall \lambda\in A_\rho, \forall y \in B_{|a|/2}(0).
\end{equation}
Using again \eqref{eq:IKO1} we can bound, for all $n\geq N_0$ and again with $\lambda \in A_\rho$:
\begin{align*}
\|(g^\lambda)_{0,r_n} - g^\lambda_\infty\|_{L^\infty(B_1)} &
 \le \|u_{0,r_n} - L_A\|_{L^\infty(B_1)} + \|(f^\lambda)_{0,r_n} - L_{\D f^\lambda(0)}\|_{L^\infty(B_1)} \\
 & \le \|u_{0,r_n} - L_A\|_{L^\infty(B_1)} + 2\lambda_{0,N_0}.
\end{align*}
Thus, by the stability of the degree with respect to uniform convergence, see e.g.\ \cite[Theorem 2.3]{Fonseca1995}, and by \eqref{eq:ginf}, we can find $N_1\geq N_0$ such that for all $n\geq N_1$ we have
\begin{equation}\label{eq:degn}
	\deg((g^\lambda)_{0,r_n}, B_{1}, y) = 1, \quad \forall \lambda \in A_\rho ,\forall y \in B_{|a|/2}.
\end{equation}
Then we obtain \eqref{eq:deg} by rescaling back, where $R_1$ depends on $|a|$ and $N_1$.\

By \eqref{eq:IKO1} and Theorem \ref{thm:IKO}, $g^\lambda$ has uniformly bounded Lipschitz constant for $\lambda \in A_\rho$. Let us assume without loss of generality that $u(0) = 0$.\
Hence there is $\delta >0$ with
\begin{equation}\label{eq:2delta}
B_{2\delta} \subset (g^\lambda)^{-1}(B_{R_1})\cap B_{r_{N_1}}, \quad \forall \lambda \in A_\rho.
\end{equation}
By \eqref{eq:qrag}--\eqref{eq:deg} we then infer that $g^\lambda$ is injective on $B_{2\delta}$, for all $\lambda \in A_\rho$.\ For a proof of this fact, simply combine \eqref{eq:qrag}--\eqref{eq:deg} with \cite[(4.4)]{Lamy2026} and \cite[Proposition 4.9(4)]{Lamy2026}. Let now $ B_\delta \times (0,\delta)\equiv Q_\delta$. By the injectivity of $g^\lambda$, for any $(z,r)\in Q_\delta$ and any $\lambda \in A_\rho$, we have $g^\lambda(z + r) \neq g^\lambda(z)$, i.e.:
\begin{equation}
\label{eq:nonequality}
\frac{u(z + r)- u(z)}{r} \neq \lambda  + \frac{f^\lambda(z+r) - f^\lambda(z)}{r} \equiv G_{r,z}(\lambda), \quad \forall \lambda \in A_\rho.
\end{equation}
For any fixed $(z,r)\in Q_\delta$, we have by \eqref{eq:IKO2} that $G_{r,z}$ is continuous. Note that, by \eqref{eq:IKO1} and the fact that $\lambda_0\leq \frac \rho 2$, we have
$$\|G_{r,z}-L_{\Id} \|_{C^0(A_\rho)}< \rho,$$
and so by \cite[Theorem 2.3]{Fonseca1995}, it follows that
$$\deg(G_{r,z},A_\rho, w)= \deg (L_{\Id} , A_\rho, w) = 1\quad \text{whenever } |w|=2\rho.$$
Thus, by \cite[Theorem 2.1]{Fonseca1995}, $\p B_{2\rho} \subset G_{r,z}(A_\rho)$. Hence, from \eqref{eq:nonequality}, it follows that
$$\bigg|\frac{u(z+r)-u(z)}{r}\bigg|\neq 2 \rho\quad \forall (z,r)\in Q_\delta,$$
and since $Q_\delta$ is connected and $(z,r)\mapsto \frac{u(z+r)-u(z)}{r}$ is continuous,  we have two alternatives:
$$\text{either }\bigg| \frac{u(z + r)- u(z)}{r} \bigg| <2 \rho, \forall (z,r)\in  Q_\delta; \quad \text{ or } \bigg| \frac{u(z + r)- u(z)}{r} \bigg| >2 \rho, \forall (z,r)\in  Q_\delta.$$
However, by the choice of $\rho$ in \eqref{eq:defrho}, we have $\lim_n|\frac{u(r_n)}{r_n}| = |a|> 4\rho$ and so the former alternative cannot hold. Therefore we deduce that
$$|\D u(z)|\geq |\p_1 u(z)|\geq 2 \rho \quad \text{ for a.e.\ } z\in B_\delta,$$
and we infer directly from Theorem \ref{thm:precise}\ref{it:lowerbound} that $u\in C^\infty(B_\delta)$.
\end{proof}

\begin{remark}\label{rem:inner}
To conclude the proof of Case 2 above, due to the orientation-preserving condition one does not need to appeal to the full strength of Theorem \ref{thm:precise}. In fact, it is possible to avoid altogether the outer variations equations \eqref{eq:EL}, and rely solely on the inner variations equations \eqref{eq:inner} to conclude, so let us sketch how to do so. First, starting from Lemma \ref{l:qc}, one can argue by contradiction, as in the proof of Theorem \ref{thm:keyineq}, to show that, if $\det A,\det B\geq 0$ and moreover $B$ is $K$-quasiregular, then
$$ c |A-B|^2 \leq \det (A-B) + C \frac{|T(A)-T(B)|^2}{\max\{|A|^2,|B|^2\}}$$
where $0<c\leq C$ depend on $K,F$. Once we know $\D u$ is bounded from below in $B_\delta$, as at the end of the proof of Case 2,  we find that
$$c |\D u_h|^2 \leq \det  \D u_h + C |h|^2$$
a.e.\ in $B_{\delta/2}$, provided that also $|h|\leq \delta/2$. We can now derive a Caccioppoli inequality for $u_h$ just as in the proof of Theorem \ref{thm:precise}\ref{it:lowerbound}, and infer that $u\in W^{2,2+\e}(B_{\delta/2})$ for some $\e>0$.
\end{remark}

\begin{proof}[Proof of Theorem \ref{thm:C1}]
By Theorem \ref{thm:C1precise}, the only claim left to show is that if $u$ is a homeomorphism then it is a diffeomorphism away from $\Sigma$. 
Returning to Lemma \ref{lemma:sigmasys} and to the precise definition of $\sigma$ therein, we see that $u$ is a $\sigma$-harmonic map, and $\sigma \in \Lip_{\loc}(\Omega\setminus \Sigma)$ by Lemma \ref{lemma:sigmasys} and Theorem \ref{thm:C1precise}. Therefore, in $\Omega\setminus \Sigma$, we can apply the result of \cite{Alessandrini2018} to conclude that $u$ is a diffeomorphism.
\end{proof}

%\clearpage
%The following code condenses the bibliography
\let\oldthebibliography\thebibliography
\let\endoldthebibliography\endthebibliography
\renewenvironment{thebibliography}[1]{
  \begin{oldthebibliography}{#1}
    \setlength{\itemsep}{0.5pt}
    \setlength{\parskip}{0.5pt}
}
{
  \end{oldthebibliography}
}

	{\small
	\bibliographystyle{abbrv-andre}
	\bibliography{library}

\begin{thebibliography}{10}

\bibitem{Acerbi1987}
E.~Acerbi and N.~Fusco.
\newblock {A regularity theorem for minimizers of quasiconvex integrals}.
\newblock {\em Arch. Ration. Mech. Anal.}, 99(3):261--281, 1987.

\bibitem{Alessandrini2001}
G.~Alessandrini and V.~Nesi.
\newblock {Univalent $\sigma$-Harmonic Mappings}.
\newblock {\em Arch. Ration. Mech. Anal.}, 158(2):155--171, 2001.

\bibitem{Alessandrini2018}
G.~Alessandrini and V.~Nesi.
\newblock {Locally invertible $\sigma$–harmonic mappings}.
\newblock {\em Rend. di Mat. e delle Sue Appl.}, 39(7):1--9, 2018.

\bibitem{Astala2009}
K.~Astala, T.~Iwaniec, and G.~Martin.
\newblock {\em {Elliptic Partial Differential Equations and Quasiconformal
  Mappings in the Plane (PMS-48)}}.
\newblock Princeton University Press, 2009.

\bibitem{Astala2010}
K.~Astala, T.~Iwaniec, and G.~Martin.
\newblock {Deformations of Annuli with Smallest Mean Distortion}.
\newblock {\em Arch. Ration. Mech. Anal.}, 195(3):899--921, 2010.

\bibitem{Astala2005}
K.~Astala, T.~Iwaniec, G.~J. Martin, and J.~Onninen.
\newblock {Extremal mappings of finite distortion}.
\newblock {\em Proc. London Math. Soc.}, 91(3):655--702, 2005.

\bibitem{Ball1977}
J.~M. Ball.
\newblock {Convexity conditions and existence theorems in nonlinear
  elasticity}.
\newblock {\em Arch. Ration. Mech. Anal.}, 63(4):337--403, 1977.

\bibitem{Chipot1986}
M.~Chipot and L.~C. Evans.
\newblock {Linearisation at infinity and Lipschitz estimates for certain
  problems in the calculus of variations}.
\newblock {\em Proc. R. Soc. Edinburgh Sect. A Math.}, 102(3-4):291--303, 1986.

\bibitem{Dacorogna2007}
B.~Dacorogna.
\newblock {\em {Direct Methods in the Calculus of Variations}}, volume~78 of
  {\em Applied Mathematical Sciences}.
\newblock Springer, New York, 2007.

\bibitem{DeLellis2019}
C.~{De Lellis}, G.~{De Philippis}, B.~Kirchheim, and R.~Tione.
\newblock {Geometric measure theory and differential inclusions}.
\newblock {\em Ann. la Fac. des Sci. Toulouse Math{\'{e}}matiques},
  30(4):899--960, 2021.

\bibitem{DePhilippis2023}
G.~{De Philippis}, A.~Guerra, and R.~Tione.
\newblock {Unique continuation for differential inclusions}.
\newblock {\em Ann. l'Institut Henri Poincar{\'{e}} C, Anal. non
  lin{\'{e}}aire}, 2024.

\bibitem{Rosa2020}
A.~{De Rosa} and R.~Tione.
\newblock {Regularity for graphs with bounded anisotropic mean curvature}.
\newblock {\em Invent. Math.}, 230(2):463--507, 2022.

\bibitem{Dolzmann2013}
G.~Dolzmann, J.~Kristensen, and K.~Zhang.
\newblock {BMO and uniform estimates for multi-well problems}.
\newblock {\em Manuscripta Math.}, 140(1-2):83--114, 2013.

\bibitem{Duzaar2004}
F.~Duzaar and G.~Mingione.
\newblock {Regularity for degenerate elliptic problems via p-harmonic
  approximation}.
\newblock {\em Ann. l'Institut Henri Poincare Anal. Non Lineaire},
  21(5):735--766, 2004.

\bibitem{Evans1986}
L.~C. Evans.
\newblock {Quasiconvexity and partial regularity in the calculus of
  variations}.
\newblock {\em Arch. Ration. Mech. Anal.}, 95(3):227--252, 1986.

\bibitem{Faraco2008}
D.~Faraco and L.~Sz{\'{e}}kelyhidi.
\newblock {Tartar's conjecture and localization of the quasiconvex hull in $
  \mathbb{R}^{{2 \times 2}} $}.
\newblock {\em Acta Math.}, 200(2):279--305, 2008.

\bibitem{Figalli2017b}
A.~Figalli.
\newblock {Regularity of $\textrm{codimension-}1$ minimizing currents under
  minimal assumptions on the integrand}.
\newblock {\em J. Differ. Geom.}, 106(3):371--391, 2017.

\bibitem{Figalli2025a}
A.~Figalli, A.~Guerra, S.~Kim, and H.~Shahgholian.
\newblock {Constraint Maps: Insights and Related Themes}.
\newblock {\em La Mat.}, 5(2):26, 2026.

\bibitem{Fonseca1995}
I.~Fonseca and W.~Gangbo.
\newblock {\em {Degree theory in analysis and applications}}.
\newblock Oxford University Press, 1995.

\bibitem{Giaquinta2004}
M.~Giaquinta and S.~Hildebrandt.
\newblock {\em {Calculus of Variations I}}, volume 310 of {\em Grundlehren der
  mathematischen Wissenschaften}.
\newblock Springer, Berlin, Heidelberg, 2004.

\bibitem{Giaquinta2012}
M.~Giaquinta and L.~Martinazzi.
\newblock {\em {An Introduction to the Regularity Theory for Elliptic Systems,
  Harmonic Maps and Minimal Graphs}}.
\newblock Scuola Normale Superiore, Pisa, 2012.

\bibitem{Gruter1984}
M.~Gr{\"{u}}ter.
\newblock {Conformally invariant variational integrals and the removability of
  isolated singularities}.
\newblock {\em Manuscripta Math.}, 47(1-3):85--104, 1984.

\bibitem{GuerraKristensen2021}
A.~Guerra and J.~Kristensen.
\newblock {Automatic Quasiconvexity of Homogeneous Isotropic Rank-One Convex
  Integrands}.
\newblock {\em Arch. Ration. Mech. Anal.}, 245(1):479--500, 2022.

\bibitem{GuerraTione2024}
A.~Guerra and R.~Tione.
\newblock {Regularity and compactness for critical points of degenerate
  polyconvex energies}.
\newblock {\em Arch. Ration. Mech. Anal.}, 248(6):107, 2024.

\bibitem{Hencl2014a}
S.~Hencl and P.~Koskela.
\newblock {\em {Lectures on Mappings of Finite Distortion}}, volume 2096 of
  {\em Lecture Notes in Mathematics}.
\newblock Springer International Publishing, Cham, 2014.

\bibitem{Hencl2005}
S.~Hencl, P.~Koskela, and J.~Onninen.
\newblock {A note on extremal mappings of finite distortion}.
\newblock {\em Math. Res. Lett.}, 12(2-3):231--237, 2005.

\bibitem{Hildebrandt1982}
S.~Hildebrandt.
\newblock {Nonlinear Elliptic Systems and Harmonic Mappings}.
\newblock In {\em Proc. 1980 Beijing Symp. Diff. Geom. Diff. Equ., Vol. 1},
  pages 481--615. Science Press, Beijing, 1982.

\bibitem{Hirsch2023}
J.~Hirsch, C.~Mooney, and R.~Tione.
\newblock {On the Lawson-Osserman conjecture}.
\newblock {\em arXiv:2308.04997}, pages 1--18, 2023.

\bibitem{Hirsch2021a}
J.~Hirsch and R.~Tione.
\newblock {On the constancy theorem for anisotropic energies through
  differential inclusions}.
\newblock {\em Calc. Var. Partial Differ. Equ.}, 60(3):1--52, 2021.

\bibitem{Iwaniec2013}
T.~Iwaniec, L.~V. Kovalev, and J.~Onninen.
\newblock {Lipschitz regularity for inner-variational equations}.
\newblock {\em Duke Math. J.}, 162(4):643--672, 2013.

\bibitem{Iwaniec2001}
T.~Iwaniec and G.~Martin.
\newblock {\em {Geometric Function Theory and Non-linear Analysis}}.
\newblock Clarendon Press, 2001.

\bibitem{Iwaniec2013a}
T.~Iwaniec and J.~Onninen.
\newblock {Mappings of Least Dirichlet Energy and their Hopf Differentials}.
\newblock {\em Arch. Ration. Mech. Anal.}, 209(2):401--453, 2013.

\bibitem{Jaaskelainen2013}
J.~J{\"{a}}{\"{a}}skel{\"{a}}inen.
\newblock {On reduced Beltrami equations and linear families of quasiregular
  mappings}.
\newblock {\em J. fur die Reine und Angew. Math.}, 682(682):49--64, 2013.

\bibitem{Jin1991}
Z.~Jin and J.~L. Kazdan.
\newblock {On the rank of harmonic maps}.
\newblock {\em Math. Zeitschrift}, 207(1):535--537, 1991.

\bibitem{Kristensen2007}
J.~Kristensen and G.~Mingione.
\newblock {The Singular Set of Lipschitzian Minima of Multiple Integrals}.
\newblock {\em Arch. Ration. Mech. Anal.}, 184(2):341--369, 2007.

\bibitem{Lamy2026}
X.~Lamy and R.~Tione.
\newblock {Hyperbolic regularization effects for degenerate elliptic
  equations}.
\newblock {\itshape Preprint}, 0, 2026,
  \href{http://arxiv.org/abs/2601.04753}{{\ttfamily arXiv:2601.04753}}.

\bibitem{Marsden1994}
J.~E. Marsden and T.~J.~R. Hughes.
\newblock {\em {Mathematical foundations of elasticity}}.
\newblock Courier Corporation, 1994.

\bibitem{Martin2022}
G.~Martin and C.~Yao.
\newblock {On the uniqueness of extremal mappings of finite distortion}.
\newblock {\em arXiv 2207.05935}, 2022.

\bibitem{Martin2024}
G.~Martin and C.~Yao.
\newblock {The exponential Teichm\"uller theory: Ahlfors--Hopf differentials
  and diffeomorphisms}.
\newblock {\itshape Preprint}, 2024,
  \href{http://arxiv.org/abs/2410.22667}{{\ttfamily arXiv:2410.22667}}.

\bibitem{Martin2020}
G.~Martin and C.~Yao.
\newblock {The $L^p$ Teichm{\"{u}}ller Theory: Existence and Regularity of
  Critical Points}.
\newblock {\em Arch. Ration. Mech. Anal.}, 248(2):13, 2024.

\bibitem{Martin2012}
G.~J. Martin and M.~McKubre-Jordens.
\newblock {Deformations with smallest weighted $L^p$ average distortion and
  nitsche-type phenomena}.
\newblock {\em J. London Math. Soc.}, 85(2):282--300, 2012.

\bibitem{Martin2017}
R.~J. Martin, I.~D. Ghiba, and P.~Neff.
\newblock {Rank-one convexity implies polyconvexity for isotropic, objective
  and isochoric elastic energies in the two-dimensional case}.
\newblock {\em Proc. R. Soc. Edinburgh Sect. A Math.}, 147(3):571--597, 2017.

\bibitem{Mazowiecka2018b}
K.~Mazowiecka and A.~Schikorra.
\newblock {Fractional div-curl quantities and applications to nonlocal
  geometric equations}.
\newblock {\em J. Funct. Anal.}, 275(1):1--44, 2018.

\bibitem{Muller1999a}
S.~M{\"{u}}ller.
\newblock {Variational models for microstructure and phase transitions}.
\newblock In {\em Calc. Var. Geom. Evol. Probl.}, pages 85--210. Springer,
  Berlin, Heidelberg, 1999.

\bibitem{Muller2003}
S.~M{\"{u}}ller and V.~{\v{S}}ver{\'{a}}k.
\newblock {Convex integration for Lipschitz mappings and counterexamples to
  regularity}.
\newblock {\em Ann. Math.}, 157(3):715--742, 2003.

\bibitem{Parker1996}
T.~H. Parker.
\newblock {Bubble tree convergence for harmonic maps}.
\newblock {\em J. Differ. Geom.}, 44(3):595--633, 1996.

\bibitem{Riviere2007}
T.~Rivi{\`{e}}re.
\newblock {Conservation laws for conformally invariant variational problems}.
\newblock {\em Invent. Math.}, 168(1):1--22, 2007.

\bibitem{Riviere2008}
T.~Rivi{\`{e}}re and M.~Struwe.
\newblock {Partial regularity for harmonic maps and related problems}.
\newblock {\em Commun. Pure Appl. Math.}, 61(4):451--463, 2008.

\bibitem{Sampson1978}
J.~H. Sampson.
\newblock {Some properties and applications of harmonic mappings}.
\newblock {\em Ann. Sci. l'{\'{E}}cole Norm. sup{\'{e}}rieure}, 11(2):211--228,
  1978.

\bibitem{Scheven2009a}
C.~Scheven and T.~Schmidt.
\newblock {Asymptotically regular problems II: Partial Lipschitz continuity and
  a singular set of positive measure}.
\newblock {\em Ann. della Sc. Norm. - Cl. di Sci.}, 8(3):469--507, 2009.

\bibitem{Schikorra2010}
A.~Schikorra.
\newblock {A remark on gauge transformations and the moving frame method}.
\newblock {\em Ann. l'Institut Henri Poincare Non Linear Anal.},
  27(2):503--515, 2010.

\bibitem{Schikorra2015}
A.~Schikorra.
\newblock {Integro-Differential Harmonic Maps into Spheres}.
\newblock {\em Commun. Partial Differ. Equations}, 40(3):506--539, 2015.

\bibitem{Sharp2013}
B.~Sharp and P.~Topping.
\newblock {Decay estimates for Rivi{\`{e}}re's equation, with applications to
  regularity and compactness}.
\newblock {\em Trans. Am. Math. Soc.}, 365(5):2317--2339, 2012.

\bibitem{Spadaro2009}
E.~N. Spadaro.
\newblock {Non-Uniqueness of Minimizers for Strictly Polyconvex Functionals}.
\newblock {\em Arch. Ration. Mech. Anal.}, 193(3):659--678, 2009.

\bibitem{Sverak1993}
V.~{\v{S}}ver{\'{a}}k.
\newblock {On Tartar's conjecture}.
\newblock {\em Ann. l'Institut Henri Poincar\'e Non Linear Anal.},
  10(4):405--412, 1993.

\bibitem{Sverak2025}
V.~{\v{S}}ver{\'{a}}k.
\newblock {New and Old Observations About Morrey's Quasi-Convexity}.
\newblock {\em \url{https://www.youtube.com/watch?v=q7qfJaI_kTc}}, 2025.

\bibitem{Szekelyhidi2004}
L.~Sz{\'{e}}kelyhidi.
\newblock {The Regularity of Critical Points of Polyconvex Functionals}.
\newblock {\em Arch. Ration. Mech. Anal.}, 172(1):133--152, 2004.

\bibitem{Tione2021}
R.~Tione.
\newblock {Minimal graphs and differential inclusions}.
\newblock {\em Commun. Partial Differ. Equations}, 46(6):1162--1194, 2021.

\bibitem{Tione2022}
R.~Tione.
\newblock {Critical Points of Degenerate Polyconvex Energies}.
\newblock {\em SIAM J. Math. Anal.}, 55(4):3205--3225, 2023.

\end{thebibliography}
	}

\end{document}